\documentclass{amsart}
\pdfoutput=1
\usepackage{amsmath, amsthm, amssymb,  color}
\usepackage{hyperref}
\usepackage{graphicx}
\usepackage[utf8]{inputenc}
\usepackage[all]{xypic}
\usepackage{csquotes}
\usepackage{adforn}
\usepackage{mathtools}
\usepackage{tikz}
\usetikzlibrary{intersections,matrix,arrows,calc,decorations,decorations.pathmorphing,decorations.markings,fadings,positioning,patterns,shapes}
\usepackage{pgfplots}
\usepackage{caption}
\usepackage{subcaption}
\usepackage[style=alphabetic,sorting=nty,backend=biber,backref,language=english]{biblatex}
\addbibresource{refs.bib}

\usepackage{xcolor}
\hypersetup{
    colorlinks,
    linkcolor={red!50!black},
    citecolor={blue!50!black},
    urlcolor={blue!80!black}
}
\tikzset{->-/.style={decoration={
  markings,
  mark=at position #1 with {\arrow{>}}},postaction={decorate}}}

\definecolor{color1}{RGB}{33,78,172}
\definecolor{color2}{RGB}{242,133,16}
\definecolor{color11}{RGB}{245,245,245}
\definecolor{color12}{RGB}{232,232,232}
\definecolor{color13}{RGB}{122,11,94}

\colorlet{fondoResaltado}{color11}
\colorlet{orange}{color1}
\colorlet{purple}{color11}

\DeclareMathOperator{\GeneralLinear}{GL}
\DeclareMathOperator{\Tangente}{T}
\DeclareMathOperator{\Id}{Id}

\DeclareMathOperator{\Cuad}{\mathbf{Quad}}
\DeclareMathOperator{\dimension}{dim}
\DeclareMathOperator{\rango}{rank}
\DeclareMathOperator{\grad}{deg}
\DeclareMathOperator{\ord}{ord}

\DeclareMathOperator{\im}{Im}
\DeclareMathOperator{\re}{Re}

\DeclareMathOperator{\CritInf}{Crit_\infty}
\DeclareMathOperator{\Divisor}{Div}
\DeclareMathOperator{\PDivisor}{PDiv}
\DeclareMathOperator{\Hom}{Hom}
\DeclareMathOperator{\pic}{Pic}
\DeclareMathOperator{\Aut}{Aut}
\renewcommand{\H}{\mathrm{H}}

\newcommand\hatH{\widehat{\textrm{H}}}

\newcommand{\ie}{\textit{i.\,e.~}}
\newcommand{\apriori}{\textit{a priori~}}

\newcommand{\bigslant}[2]{{\raisebox{.8em}{$#1$}\left/\raisebox{-.8em}{$#2$}\right.}}

\setcounter{tocdepth}{3}
\synctex=1

\usepackage{color}
\newif\ifincludeprevious

\includeprevioustrue

\newtheorem{teorema}{Theorem}
\numberwithin{teorema}{section}
\newtheorem{proposicion}[teorema]{Proposition}
\newtheorem{lema}[teorema]{Lemma}
\newtheorem{corolario}[teorema]{Corollary}
\newtheorem{definicion}[teorema]{Definition}
\newtheorem{observacion}[teorema]{Remark}

\theoremstyle{definicion}
\newtheorem*{teorema*}{Theorem}

\date{}
 
\title{Horizontal strips and spaces of quadratic differentials}
\author{Rom\'an Contreras}

\begin{document}
\maketitle

\tableofcontents
\section*{Introduction}

This work is an attempt to give a complete albeit short account of
the theory needed to define certain families of quadratic differentials
(and a natural parametrization of such families).

The motivation for this work arose after several attempts
at understanding the paper \cite{bridgeland2015}.
Given the extension and depth of the paper, we choose to
work just on a portion of it, and focus in the particular case of
GMN differentials with double poles and no saddle trajectories.

We now give a brief panoramic view of \cite{bridgeland2015},
as well as the scope and reach of this text and its relation
to the paper.

A \emph{quadratic differential} on a Riemann surface is a
meromorphic section of the tensor square of the canonical bundle
of the surface.
Locally it can be written as 
\( f(z)dz^2\) where  \( z\) is a holomorphic coordinate and 
\( dz^2\) is a shorthand for \( dz\otimes dz\).

Quadratic differentials, which \apriori are part of the complex geometry realm,
have several related \emph{real} geometric structures.
Every differential gives rise to a flat Riemannian metric with conical singularities
on the Riemann surface. Also, they define a circle of
foliations by geodesics of this metric.
Both of these structures are central to the study of
quadratic differentials, they are treated in section \ref{cap:geometria-diferenciales}.

One of the fundamental results about this geometric structures
is that they are locally trivial except at a finite number of points:
the metric and foliation are isomorphic to the Euclidean metric and the
foliation by horizontal lines in  \( \mathbb{C}\).
This is equivalent to the fact that it is always possible to
find charts where the differential is equal to \( dz^2\).

From the whole circle of foliations of a quadratic differential,
there is a distinguished one: the horizontal foliation.
This foliation corresponds to the foliation by horizontal lines of the complex plane.
The leaves of this foliation are called the horizontal trajectories of the
quadratic differential.
The global structure of the horizontal trajectories
can be very complicated and is one of the main themes of the theory.

Returning to the complex geometry context,
the quadratic differentials are the ``quadratic'' analogue
of meromorphic differential \( 1-\)forms.
They can also be integrated along curves, but in this case,
it is necessary to first take a local square root of the differential.
A slightly technical point is that the square root of
a quadratic differential is not globally defined.
Rather, it is more natural to define the square root on a two sheeted
branched covering of the surface.
This covering is called the \emph{spectral cover}.
Once we construct the spectral cover, it its possible
to define a group morphism from (a subgroup of) the first
homology group of the spectral cover to the complex numbers, given
by integration of the square root of the differential.
This morphism is the analogue of the periods
of a meromorphic \( 1-\)form.
We will discuss the construction of the spectral cover
and its main properties in section \ref{cap:espectral-hom}.

If the quadratic differential satisfies certain properties
(does not have trajectories connecting zeroes, and only has double poles)
it is possible to split the surface in a union of \emph{horizontal strips}
by cutting along some horizontal trajectories.
In this horizontal strips, the differential is trivial.
Every horizontal strip gives rise to a distinguished homology class
on the spectral cover called a \emph{standard saddle class}.
The collection of all the standard saddle classes form a basis for
the \emph{hat homology} group. This is relevant because the
cohomology class of the square root of the differential is fully
determined by its value on the hat homology group.
The details of the horizontal strip decomposition and the standard saddle classes
can be found in section \ref{sec:franjas}.

Finally, in section \ref{sec:espacios} we define the spaces of \emph{framed differentials} of a given polar type, and prove that the \emph{period mapping},
gives an isomorphism to \(\mathbb{H}^n\) for some \(n\) that only depends on
the genus of the surface and the orders of the poles of the differentials.

Quadratic differentials have been studied as early as the early twentieth century\footnote{See \cite[p.~VI]{strebel-quadratic}} and although their study originated
in the context of functional analysis and variational calculus,
nowadays they are relevant in several areas.

The first big success in the theory was in the work of
Oswald Teichm\"uller in connection to the theory of quasiconformal
transformations.
This eventually led Alfors and many others to the construction of
Teichm\"uller spaces and an analytical realization of the moduli space
of Riemann surfaces (which is the right way to topologize the set of
isomorphism classes of Riemann surfaces).

Later, towards the second half of the twentieth century
began the study of the moduli spaces of quadratic differentials
(which are related, but not the same as the moduli space of Riemann surfaces)
\footnote{And also the spaces of meromorphic \( 1-\)forms or \emph{Abelian differentials}. This spaces are closely related to moduli spaces of quadratic differentials.}
\ie  spaces whose points are in bijection with the isomorphism classes
of quadratic differentials.
Usually there is some restriction on the orders of the poles
o the differentials.

This spaces have a very rich and complicated geometry.
They are stratified singular spaces (orbifolds).
Besides, they naturally have a dynamical structure: there is an
action of the group \(\GeneralLinear_+(2,\mathbb{R}) \),
and also a flow called the \emph{Teichm\"uller flow},
which preserves a natural measure.%
\footnote{See, for example, the papers of Kontsevich-Zorich and Lanneau about
the number of connected components of the strata \cite{kontsevich2003,lanneau2008}; the paper
of Eskin-Okounkov about the volume of the moduli space \cite{eskin2001};
and the paper of Avila-Viana about the Kontsevich-Zorich conjecture \cite{avila2005}, among many other works about this spaces.}

One of the most recently studied topics of this theory
concerns its relation to \emph{stability conditions} on
\emph{triangulated categories}.%
\footnote{For a brief introduction to derived and triangulated categories, see  \cite{thomas2000derived}.}

Given a stability condition \( \sigma\) on a category \( \mathcal{C}\),
there is a group morphism
\(Z_\sigma: K(\mathcal{C})\rightarrow \mathbb{C}\) called the central charge of the stability condition,
where \( K(\mathcal{C})\) is the \emph{Grothendieck group} of the category.
Let \( Stab(\mathcal{C})\) be the set of all stability conditions on a category \( \mathcal{C}\).
In \cite{bridgeland2007} it is proven that \( Stab(\mathcal{C})\) has a natural topology
with which it becomes a complex manifold,
and that the function that maps every stability condition
to its central charge is a local isomorphism.

In the year 2007, Maxim Kontsevich made the following remarks:
\blockquote[\cite{kontsevich2007donaldson}]{
Hence, $Stab(\mathcal{C})$ is a complex manifold, not necessarily connected.
Under our assumptions one can show that the group $Aut(\mathcal{C})$ acts properly discontinously on $Stab(\mathcal{C})$.
On the quotient orbifold $Stab(\mathcal{C})/Aut(\mathcal{C})$ there is a natural non-holomophic action of $\GeneralLinear_+(2,\mathbb{R})$ arising from linear transformations of $\mathbb{R}^2\cong \mathbb{C}$ preserving the standard orientation. A similar geometric structure appears on the moduli spaces of holomorphic Abelian differentials,\textelp{}}

The next year, in a paper by  Kontsevich and Yan Soibelman, we can find
the next observation:

\blockquote[\cite{kontsevich2008stability}]{
Geometry similar to the one discussed in this paper also appears in the theory of moduli spaces of holomorphic abelian differentials \textelp{} 
The moduli space of abelian differentials is a complex manifold, divided by real “walls” of codimension one into pieces glued from convex cones. It also carries a natural non-holomorphic action of the group $\GeneralLinear_+(2,\mathbb{R})$. There is an analog of the central charge $Z$ in the story. It is given by the integral of an abelian differential over a path between marked points in a complex curve. This makes plausible the idea that the moduli space of abelian differentials associated with a complex curve with marked points, is isomorphic to the moduli space of stability structures on the (properly defined) Fukaya category of this curve.}

From a completely different perspective, quadratic differentials appeared
in the study of asymptotic solutions of partial differential equations,
more precisely the WKB method.%
\footnote{The WKB method (named after Wentzel–Kramers–Brillouin) originally
arose in quantum mechanics as a method
to obtain approximate solutions to the Schr\"odinger equation.}
In this context, the horizontal trajectories arising from a zero of the differential are called \emph{Stokes lines} and its study gave rise to the so called
\emph{Stokes phenomenon}.
In \cite{kawai}  \emph{Stokes graph}
gives rise to a triangulation of the Riemann sphere.

Some years later, in \cite{neitzke-wall-crossing},
Gaiotto, Moore, and Neitzke prove that under certain hypothesis
every (meromorphic) quadratic differential defines
an ideal triangulation of the underlying surface.
In this triangulation the arcs are given by choosing some horizontal trajectories
and the vertices are exactly the double poles of the differential.
This triangulation is called the \emph{WKB triangulation}.
Also, they conjecture about some possible connection between
spaces of quadratic differentials and the \emph{wall-crossing formula}
of Kontsevich and Soibelman.

For some years now, it has been known that
every triangulation of a surface gives rise to a
triangulated category.
For a panoramic view of the necessary steps to
define said category, see  \cite{2013labardini}.

Finally, synthesizing the previous results, 
Bridgeland and Smith prove in \cite{bridgeland2015} that the space of
quadratic differentials on a Riemann surface is
isomorphic to the space of stability conditions on the
category determined by any WKB triangulation of any quadratic differential on
the Riemann surface.
One of the delicate points of the paper is
proving the invariance of the triangulated category
under perturbations of the quadratic differential.

In this text we present the necessary tools to define the 
\emph{WKB triangulation}
for a GMN differential without saddle trajectories.
Also, we develop the necessary material to prove that
the space of GMN differentials without poles of order greater than two
and without saddle trajectories naturally decomposes as
a union of sets parametrized by  \( \mathbb{H}^n\).
This last assertion is approximately equivalent to
the contents of Section \( 4.5\) of \cite{bridgeland2015}.

Topologically, this families of differentials correspond to
cells with complement of codimension one . Thus they give rise to
a wall and chamber decomposition of the space of quadratic differentials.

It is not hard to prove that in every cell, the WKB triangulation is constant,
hence, to prove the invariance of the triangulated category
under changes of the quadratic differentials,
it is enough to give a detailed analysis of the change in the
triangulation when going from one cell to another,
and how this change relates to the triangulated categories involved.
This is precisely one of the key arguments in \cite{bridgeland2015}.

\section{Geometry of quadratic differentials}%
\label{cap:geometria-diferenciales}

In this section we define the main objects of study: quadratic differentials.
Also, we reproduce some of the classical results about the geometry of quadratic differentials.

All the theorems in this section are well-known and can be found in several sources.
We choose to follow \cite{strebel-quadratic}.

To avoid unnecessary references to the same text, 
whenever a theorem doesn't have an explicit reference, it should be understood
that the theorem can be found in the aforementioned source.

All the other theorems have an explicit reference to the literature.

\subsection{Quadratic differentials}

Let \( X\) be a compact Riemann surface and  \( K_X\) be the canonical bundle over \( X\),
\ie , the line bundle over \( X\) such that the fiber \( K_x\) over any point 
\( x \in X\) is the cotangent space \(\Tangente_x^* X\) to \( X \) at \(x\). \footnote{More precisely, the \emph{complex} cotangent space.}
	
Let's recall that the holomorphic sections (resp. meromorphic) of \(K_X\) 
are precisely the holomorphic \( 1-\)forms (resp. meromorphic) over 
\( X\).\footnote{ The holomorphic differentials are also known as \emph{Abelian differentials}.}

Let \(Q_X:=K_X^{\otimes 2}\) be the tensor square of \( K_X\).
\(Q_X\) is a line bundle over \( X\) such that for any point \( x\in X\), the fiber \(Q_x\)
is canonically isomorphic to
\( \Tangente^*_x X\otimes_\mathbb{C}\Tangente^*_x X\).

\begin{definicion}
	A \emph{holomorphic quadratic differential} (resp. meromorphic) over  \(X\) is a holomorphic section (resp. meromorphic) of \( Q_X\).
\end{definicion}

Whenever we speak of line bundles (or more generally, fiber bundles),
there are two equivalent ways of doing so:

The first is describing the bundle in global terms,
that is, through a \(2\)-dimensional complex manifold \(L\)
and a projection \( p:L \rightarrow X\) such that \(p\) is a submersion and
\( L_x:= p^{-1}(x)\) has the structure of a one dimensional complex vector space.

The second is describing the bundle in local terms, \ie, by means of an open covering \(  \mathcal{U}\) of \( X\)
and \emph{transition functions}  \(\{ \phi_{U,V}\}_{(U,V)\in  \mathcal{U}\times  \mathcal{U}}\)
such that \[ \phi_{U,V}:U \cap V\rightarrow\GeneralLinear(1,\mathbb{C})\cong \mathbb{C}^*\]
and the transition functions satisfy certain \emph{cocycle} conditions.

The second form allows us to give a correspondence between isomorphism classes of line bundles over \(X\)
and the elements of the first sheaf cohomology group \( \H^1 (X,\mathcal{O}^*_X)\) of the sheaf of non-vanishing holomorphic functions \( \mathcal{O}^*_X\).

Let's write \(\pic(X)\) for the set of isomorphism classes of line bundles over \(X\).
The tensor product operation between line bundles gives rise to a group structure on \( \pic(X)\),
such that the identity element is the trivial bundle \( \mathbb{C}_X:= X\times \mathbb{C}\),
and the inverse of a bundle \( L\) is the dual bundle \( L^*\).

This group is called the \emph{Picard group} of \( X\).

The above correspondence sets up a group isomorphism 
\[ \pic(X)\cong \H^1(X,\mathcal{O}^*_X).\]

The same way we can describe line bundles in local or global terms,
sections of line bundles can also be described in both ways.

From the global viewpoint, a section \( \sigma\) of a bundle \( p:L \rightarrow X\)
is a holomorphic function
\( \sigma: X\rightarrow L\) such that \( p\circ \sigma =\Id_X\).
In the local picture, a section \( \sigma \) of a bundle given by transition functions
\(\{ \phi_{U,V}\}_{(U,V)\in  \mathcal{U}\times  \mathcal{U}}\) defined on an open covering \( \mathcal{U}\)
is a collection of holomorphic functions \(\{\sigma_U:U\rightarrow \mathbb{C}\}_{U\in  \mathcal{U}}\)
that satisfy
\[ \sigma_U=\sigma_V\phi_{U,V}\]
in the common domain
\( U\cap V\).

If \( \sigma\) is a section of a line bundle \( L\) and  \( \tau\) is a section of  \( L^\prime\)
then, \( \sigma\otimes \tau\) is a section of \( L\otimes L^\prime\).

Moreover, if \(U\) is an open set where \(L\) and \(L^\prime\) are trivial, trivializations of \(L\) and \(L^\prime\)
induce a trivialization of \( L\otimes L^\prime\), and
\( (\sigma\otimes \tau)_ U= \sigma_ U\tau_ U\), \ie
tensor product of sections corresponds to the usual product of functions.

Let \( z: U \rightarrow \mathbb{C}\) be a local chart of \( X\). For every point \( x\in  U\)
the differential of \( z\) is a  \( \mathbb{C}\)-linear isomorphism \( dz_p:\Tangente_p X\rightarrow \Tangente_{z(p)}\mathbb{C}\cong \mathbb{C}\), \ie, \( dz_p \in \Tangente_p^* X\).
With this observation we can conclude that \( dz\) is a non-vanishing section  of \(K\) over  \(  U\).

Since \( K_p\) is one dimensional, any other section is a (pointwise) sacalar multiple of \( dz\),
so that, over \(  U\) \( \sigma=f(z)dz\) where \( f\) is a holomorphic function.

The same way, \( dz\otimes dz\) is a non-vanishing section of \( Q\),
so that any section \( \varphi\) of \( Q\) can be written as
\( f(z)dz\otimes dz\) locally. Usually we will write  \(  dz^2\) instead of \( dz\otimes dz\).

If \( \phi:  U \rightarrow V\) is a biholomorphic mapping between two open sets of the complex plane,
and \(\omega=  f(z)dz\) is a holomorphic
\( 1-\)form
on \( V\), then, the \emph{pullback} of \( \omega\) by \( \phi\)
is the holomorphic \(1-\)form \( \phi^*(\omega)= f(\phi(z))\frac{d\phi}{dz}dz\).

If \( \varphi= f(z)dz^2\) is a quadratic differential, we define
the \emph{pullback} of \( \varphi\) by \( \phi\) as \( \phi^*(\varphi)= f(\phi(z))\left (\frac{d\phi}{dz}\right)^2dz^2\).

This is the only possible definition if we require that the pullback operation to be compatible with the tensor product of sections of bundles. So, if \( \omega\) and \( \omega^\prime\) are two holomorphic \( 1-\)forms, then
\( \phi^*(\omega\otimes \omega^\prime)=\phi^*(\omega)\otimes\phi^*(\omega^\prime)\).

So, if \( z: U \rightarrow \mathbb{C}\) is a chart ,
\( dz^2\) is the quadratic differential defined previously, and \( \varphi\) is another quadratic differential
such that \( \varphi=f(z)dz^2\), then  
\[ (z^{-1})^*(\varphi)=f(z)dz^2\]
where the right hand side is a quadratic differential on \( \mathbb{C}\).

From this, we can conclude that if \( z:  U \rightarrow \mathbb{C}\) and
\( w: V\rightarrow \mathbb{C}\) are two charts, \( \varphi\) is a quadratic differential such that
\( \varphi = f(z)dz^2 = g(w)dw^2\), then 
\[ f(z)dz^2= f(z(w))\left(\frac{z(w)}{dw}\right)^2dw^2\]
so that a rule analogous to the chain rule is also valid for quadratic differentials.

\subsection{Metric and foliation of a quadratic differential}

As we have already mentioned, quadratic differentials give rise to a
foliation and a Riemannian metric, both with singularities, on the Riemann surface.

Let \( X\) be a Riemann surface and  \(\varphi\) a meromorphic quadratic differential%
\footnote{This text is mainly concerned with differentials with double poles.}%
over\( X\).
\begin{definicion}
	A \emph{critical point} of \(\varphi\) is a zero or a pole of \(\varphi\).
	A \emph{regular point} is a point that is not critical.
	A critical point is \emph{finite} if it is a zero or a pole of order one. If it is a pole of order at least two
	it is a \emph{infinite} critical point.
\end{definicion}

Let \( p\) be a regular point of \(\phi\). We can find a contractible neighbourhood \( U\) of \( p\) with no critical points of  \( \varphi\)
and where \( \varphi= f(z)dz^2\).
In such a neighbourhood we define the function:
\[ \psi(q):= \int_p^q \sqrt{f(z)}dz\]
where \( \sqrt{f(z)}\) is a branch of the square root of \( f\).

The function \(\psi\) is well defined and holomorphic%
\footnote{It does not depend on the path from \( p\) to \( q\) chosen to perform the integration.}
because \(\sqrt{f(z)}dz \) is a holomorphic \( 1\)-form, hence, closed.

Furthermore, it has non-vanishing derivative, so that in a possibly smaller neighbourhood of \(p\)
it is a biholomorphism.

A simple calculation shows that
\( (\psi^{-1})^*(\varphi)= dz^2\) the natural quadratic differential of \( \mathbb{C}\).

We have proven:

\begin{proposicion}\label{cartas-normales}
	Around any regular point \( p\) of \( \varphi\) there exists a chart 
{\(\psi :U\rightarrow \mathbb{C}\)}
such that \( (\psi^{-1})^*(\phi)= dz^2\).

Besides, any chart with this property can be written in the form
\[ \psi(q)=\int_p^q\sqrt{f(z)}dz+b\]
where \( b\) is a constant and \( \varphi=f(z)dz^2\).
\end{proposicion}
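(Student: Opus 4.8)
The existence assertion is, in effect, already carried out in the computation preceding the statement: the explicit function $\psi_0(q) = \int_p^q \sqrt{f(z)}\,dz$ is holomorphic with non-vanishing derivative on a contractible neighbourhood free of critical points, hence restricts to a biholomorphism onto its image, and the identity $(\psi_0^{-1})^*(\varphi) = dz^2$ was verified there. So the genuine content of the proposition is the second assertion — that \emph{every} chart flattening $\varphi$ has the stated form — and that is what my plan addresses.

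I would argue directly from the transformation law for quadratic differentials rather than by comparing two charts. Let $\psi\colon U \to \mathbb{C}$ be any chart with $(\psi^{-1})^*(\varphi) = dz^2$, and regard $w = \psi$ as a holomorphic function of the original coordinate $z$ on $U$, in which $\varphi = f(z)\,dz^2$. Expressing $\varphi$ in the coordinate $w$ by the chain rule $dz^2 = (dz/dw)^2\,dw^2$ gives $\varphi = f(z(w))\,(dz/dw)^2\,dw^2$; the hypothesis forces the coefficient to equal $1$, i.e. $f(z)\,(dz/dw)^2 = 1$, which rearranges to $(\psi'(z))^2 = f(z)$. Since $p$ is regular, $f$ is non-vanishing on $U$, and $\psi'$ is continuous and non-vanishing (being the derivative of a biholomorphism), so the sign cannot jump: there is a single branch of the square root on $U$ with $\psi'(z) = \sqrt{f(z)}$. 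Integrating along any path from $p$ to $q$ in $U$ — path-independence holds because $U$ is contractible and $\sqrt{f}\,dz$ is a holomorphic, hence closed, $1$-form — yields $\psi(q) = \int_p^q \sqrt{f(z)}\,dz + b$ with $b = \psi(p)$, exactly the claimed form. Choosing the opposite branch of the square root reverses the sign, so that case is absorbed into the freedom of branch in the formula.

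The hard part here is only bookkeeping rather than any real difficulty: one must justify that the sign/branch of the square root is globally consistent across $U$ and that the relation $(\psi')^2 = f$ genuinely integrates to the closed-form integral. Both reductions rest on $U$ being contractible and containing no critical points, so that $f$ never vanishes and $\sqrt{f}\,dz$ is exact. Equivalently, one could phrase the argument by comparison with the chart $\psi_0$ above: the transition map $h = \psi\circ\psi_0^{-1}$ then satisfies $(h')^2 = 1$, whence $h' \equiv \pm 1$ and $h(z) = \pm z + b$, giving $\psi = \pm\psi_0 + b$ and recovering the same conclusion. I would favour the direct computation, as it produces the explicit formula in a single step.
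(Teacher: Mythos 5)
Your proposal is correct and follows essentially the same path as the paper: the existence half is exactly the computation the paper carries out immediately before the proposition (and signals with ``We have proven''), and your characterization via \( (\psi')^2 = f \) is the same transformation-law calculation the paper performs right after the proposition, where it shows that any map preserving \( dz^2 \) satisfies \( \left(\frac{d\psi}{dz}\right)^2 = 1 \) and hence equals \( \pm z + b \). Indeed, the ``equivalent phrasing'' you mention at the end --- comparing with \( \psi_0 \) via the transition map \( h = \psi\circ\psi_0^{-1} \) --- is precisely the paper's route, so the only difference is that you apply the computation directly to \( \psi \) rather than to the transition map, which is a cosmetic rearrangement.
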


The charts that satisfy the conclusion of the proposition above
play a major role in the definition of the metric and the foliation.

\begin{definicion}
	We will call \emph{normal chart} (for \(\varphi\) ) any chart \( z\)
	of \( X\) such that \( \varphi=dz^2\).
\end{definicion}

If \( \psi:U\rightarrow V\) is a biholomorphism
between two open sets of the complex plane such that \( \psi^*(dz^2)=dz^2\), then
\(( \frac{d\psi}{dz})^2dz^2=dz^2\) so that \(( \frac{d\psi}{dz})^2=1 \)
hence \( \psi(z)=\pm z +b\).

This last observation implies that the change of coordinate maps between two normal charts
are precisely
\[ z\mapsto \pm z+b.\]
In any case, the Euclidean metric and the horizontal foliation%
\footnote{The horizontal foliation of \( \mathbb{C}\) is the foliation whose leaves are the horizontal lines.
	In fact, \emph{every} foliation by parallel lines is invariant under such transformations.
	In any case, the most important foliation in this text is the horizontal one.}
of \( \mathbb{C}\)
are invariant under such change of coordinate maps.

\begin{definicion}\label{def:metrica-foliacion}
	The metric defined by \( \varphi\) is the metric that equals the Euclidean metric in any normal chart.

	In the same way, the (horizontal) foliation defined by \( \varphi\) is given by the pullback of the
	horizontal foliation by normal charts.
The leaves of the horizontal foliation are called \emph{horizontal trajectories} of the differential.
\end{definicion}

Both structures are uniquely defined and invariant under normal coordinate changes.

Using proposition \ref{cartas-normales} it's not hard to show that the metric is locally given by
 \( |\sqrt{f(z)}||dz|\) and that the foliation agrees with the level sets of the function
 \[ q\mapsto \im\left(\int_p^q\sqrt{f(z)}dz\right)\]
where \( \im(\cdot)\) is the imaginary part.

Since the metric is locally isometric to the Euclidean metric,
its clear that it is a flat metric.
The leaves of the horizontal foliation (with an appropriate parametrization) are geodesics of the metric.

\begin{observacion}
	So far we have only defined the metric and the foliation
	on the complement of the critical points.
	The metric can be extended to the finite critical points
	(although it will no longer be a Riemannian metric in the usual sense)

	In a certain way, the infinite critical points are points at infinity,
	because any path from any regular point to an infinite critical point
	necessarily has infinite length.
		 \end{observacion}

\subsection{Local geometry of the horizontal foliation}

Because of the way in which the foliation and metric are defined, it is clear that both structures are
locally trivial around any regular point.
From this fact, we can conclude that, in order to understand the local behaviour of these structures,
we only need to concentrate on the critical points.

In this subsection we will classify explicitly the local behaviour of the metric and the foliation.

First we state the main result and then we give a brief sketch of the proof.

\begin{teorema}\label{teo:criticos}
	Let \( p\) be a critical point of \( \varphi\). If the order of \( \varphi\) at \( p\) is \( n\)
	with \( n>0\) or \( n=2k+1<0\), \ie, \( p\) is a zero or a pole of odd order,
	then there is a chart \( z\) centered at \(p\) such that 
	\[ \varphi=Cz^ndz^2,\]
	where \( C\) is a positive constant.

	If \( \varphi\) is of order \( n=2k<0\) at \( p\), \ie, \( p\) is an even order pole,
	then there exists a chart \( z\) centered at \( p\) such that 
	\[ \varphi= \left( Cz^k +bz^{-1} \right)^2dz^2,\]
	where \( C\) is a positive constant and  \( b\) is a complex number.
\end{teorema}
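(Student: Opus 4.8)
The plan is to produce the coordinate by integrating a local square root of \(\varphi\), in the spirit of Proposition~\ref{cartas-normales}. I would start by fixing an arbitrary chart \(z\) centred at \(p\), writing \(\varphi=f(z)\,dz^2\) with \(f(z)=z^{n}h(z)\) and \(h(0)\neq 0\), so that \(\sqrt{\varphi}=z^{n/2}\sqrt{h(z)}\,dz\) with \(\sqrt{h}\) holomorphic and non-vanishing near \(0\). The feature that separates the two cases is whether \(\int\sqrt{\varphi}\) contains a logarithm, equivalently whether the Laurent expansion of \(z^{n/2}\sqrt{h}\) has a nonzero coefficient of \(z^{-1}\). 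Inspecting the exponents \(n/2+j\) shows this can happen only when \(n\) is a negative even integer; in that situation \(\sqrt{\varphi}\) is single valued and the offending coefficient is its residue \(b=\frac{1}{2\pi i}\oint_{\gamma}\sqrt{\varphi}\) at \(p\), a period of \(\sqrt{\varphi}\) and hence an invariant of \(\varphi\) that no coordinate change can remove. This is exactly why \(b\) appears in the normal form of even order poles and in no other case.

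In the first case (\(n>0\), or \(n=2k+1<0\)) we have \(n+2\neq0\) and no logarithm, so I would define the new coordinate \(w\) by
\[ w^{(n+2)/2}=\tfrac{n+2}{2}\int_{p}^{z}\sqrt{\varphi}. \]
Expanding the integral term by term gives \(\tfrac{n+2}{2}\int_p^z\sqrt{\varphi}=z^{(n+2)/2}v(z)\) with \(v\) holomorphic and \(v(0)\neq0\); since \(v\) is a nonvanishing unit, \(w=z\,v(z)^{2/(n+2)}\) is a single-valued holomorphic function with \(w(p)=0\) and \(w'(p)\neq0\), hence a genuine chart centred at \(p\) (the half-integer powers occurring for odd \(n\) cancel, leaving \(w\) unbranched). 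Differentiating the defining relation yields \(w^{n/2}\,dw=\sqrt{\varphi}\), and squaring gives the single-valued identity \(\varphi=w^{n}\,dw^{2}\). Finally, replacing \(w\) by \(\lambda w\) multiplies the constant by \(\lambda^{n+2}\); as \(n+2\neq0\) this normalizes it to any prescribed positive real, giving \(\varphi=Cw^{n}\,dw^{2}\).

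In the second case (\(n=2k<0\), \(k\leq-2\)) integration produces a logarithm, \(\int_{p}^{z}\sqrt{\varphi}=b\log z+R(z)\), where \(R\) is a Laurent series whose principal part runs from \(\tfrac{c_0}{k+1}z^{k+1}\) up to \(z^{-1}\). I would look for a coordinate \(w=z\,u(z)\) with \(u(0)\neq0\) and a constant \(C\) realizing
\[ \int_{p}^{z}\sqrt{\varphi}=\tfrac{C}{k+1}w^{k+1}+b\log w. \]
Since \(b\log w=b\log z+b\log u\), this reduces to the functional equation \(\tfrac{C}{k+1}z^{k+1}u(z)^{k+1}=R(z)-b\log u(z)\) for the holomorphic unit \(u\). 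Matching the leading term fixes \(u(0)^{k+1}=c_0/C\) (and after the construction \(C\) is rescaled to be positive by \(w\mapsto\lambda w\), which leaves \(b\) invariant), while the lower-order coefficients of \(u\) are determined recursively; the substantive point is that this formal solution converges, which I would establish by a majorant / implicit-function argument, this being the construction of the distinguished local parameter. By construction \(\sqrt{\varphi}=(Cw^{k}+bw^{-1})\,dw\), so \(\varphi=(Cw^{k}+bw^{-1})^{2}\,dw^{2}\).

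The genuine obstacle is this last, even-pole case: unlike the branch-point cases — which are essentially Proposition~\ref{cartas-normales} with a fractional power and present no real difficulty — here one must carry the invariant residue \(b\) through the computation and solve a transcendental, logarithm-laden matching equation, so the crux is the convergence of the normalizing coordinate. The boundary value \(n=-2\) (\(k=-1\)) must be isolated, because there the power term \(\tfrac{C}{k+1}w^{k+1}\) degenerates into a logarithm and the whole principal part of \(\int\sqrt{\varphi}\) is logarithmic; in that subcase, however, everything is explicit: writing \(\int_{p}^{z}\sqrt{\varphi}=b\log z+\rho(z)\) with \(\rho\) holomorphic and \(\rho(0)=0\), the coordinate \(w=z\exp\!\big(\tfrac{1}{b}\rho(z)\big)\) is manifestly a holomorphic chart with \(w'(p)\neq0\) and yields \(\varphi=b^{2}w^{-2}\,dw^{2}\), the asserted form with \(k=-1\).
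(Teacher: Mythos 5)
Your proposal is correct and follows essentially the same route as the paper's sketch: reduce to the equation $\sqrt{\varphi}=w^{n/2}\,dw$, integrate the square root term by term, and extract the new coordinate as $w=z\cdot(\text{unit})^{2/(n+2)}$, with the logarithmic residue explaining why the even-order poles need the extra parameter $b$. In fact you go further than the paper, whose proof handles only the odd-zero case in detail and merely remarks that logarithmic terms obstruct the other normal form; your functional equation for the unit $u$ in the even-pole case is sound (after multiplying through by $z^{-(k+1)}$ it is jointly holomorphic near $(0,u_0)$ with nonvanishing $u$-derivative, so the holomorphic implicit function theorem you invoke does deliver convergence), and your explicit treatment of $n=-2$ agrees with Theorem \ref{teo:forma-polos-dobles}.
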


The previous theorem allows us to give a complete classification of the foliation and metric near critical points:

\begin{teorema}\label{teo:forma-criticos-finitos}
	Let \( \varphi=z^ndz^2\) with \( n\geq -1\).
	Then, the \( n+2\) rays from the origin with constant angles 
	\( k\frac{2\pi}{n+2}\) where \( k=0,1,\ldots,n+1\) are leves of the foliation of \( \varphi\).

	This rays divide the plane in  \( n+2\) sectors, each of this sectors have a metric and foliation isomorphic to the Euclidean metric and horizontal foliation
of the upper half-plane.
The isomorphism is given by the biholomorphism \( z\mapsto \left(\frac2{n+2}\right)^2z^{\frac{n+2}2}\)
	that is well-defined in each one of the sectors.
\end{teorema}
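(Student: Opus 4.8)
The plan is to exhibit the explicit normal chart for \( \varphi = z^n dz^2\) and then read off both assertions from it. First I would apply Proposition \ref{cartas-normales} with \( f(z)=z^n\): choosing a branch of \( \sqrt{f}=z^{n/2}\) and integrating from the origin gives
\[ \psi(z)=\int_0^z \zeta^{n/2}\,d\zeta=\frac{2}{n+2}\,z^{\frac{n+2}{2}}, \]
where the integral converges at the origin precisely because \( n\geq -1\) (this is where the hypothesis enters). This is, up to the positive real factor \( \frac{2}{n+2}\), the map in the statement; the genuine normal chart — the one realizing an isometry — is \( \psi\) itself, and it satisfies \( (\psi^{-1})^*(\varphi)=dw^2\), while the extra scalar is immaterial for the foliation.

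To verify that the \( n+2\) rays \( \arg z=k\frac{2\pi}{n+2}\) are horizontal trajectories, I would use the fact recorded just after Definition \ref{def:metrica-foliacion} that the horizontal foliation is the level-set foliation of \( q\mapsto \im\psi(q)\). Writing \( z=re^{i\theta}\), the chart sends the ray at angle \( \theta_k=k\frac{2\pi}{n+2}\) to \( \psi=\frac{2}{n+2}r^{\frac{n+2}{2}}e^{ik\pi}\), which is real; hence \( \im\psi\equiv 0\) along each such ray, so each ray is contained in a single leaf.

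For the second claim I would fix a sector \( S_k=\{\,re^{i\theta}: r>0,\ \theta_k<\theta<\theta_{k+1}\,\}\). Since \( S_k\) is simply connected and avoids the origin, a single-valued branch of \( z^{\frac{n+2}{2}}\), and hence of \( \psi\), is defined there. The key computation is that the argument map \( \theta\mapsto \frac{n+2}{2}\theta\) carries the open angular interval of width \( \frac{2\pi}{n+2}\) onto one of width exactly \( \pi\), while \( r\mapsto r^{\frac{n+2}{2}}\) is a bijection of \( (0,\infty)\) onto itself. Consequently \( \psi\) maps \( S_k\) biholomorphically onto an open half-plane; by the previous paragraph the two boundary rays are sent to the positive and negative real axes, so this half-plane is bounded exactly by the real line, i.e.\ it is \( \{\im w>0\}\) or \( \{\im w<0\}\).

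Finally, since \( \psi|_{S_k}\) is a normal chart, Definition \ref{def:metrica-foliacion} identifies the metric and horizontal foliation of \( \varphi\) on \( S_k\) with the \( \psi\)-pullbacks of the Euclidean metric and horizontal foliation of the image half-plane; composing with \( w\mapsto -w\) (a normal coordinate change, hence preserving both structures) whenever the image is the lower half-plane yields the upper half-plane in every case. I expect the main obstacle to be the careful bookkeeping of the fractional power \( z^{\frac{n+2}{2}}\): selecting the branch consistently on each sector and proving the resulting map is genuinely injective and surjective onto a half-plane, rather than merely locally biholomorphic — which is exactly where the angular width \( \frac{2\pi}{n+2}\) is essential. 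The extreme cases \( n=-1\) (a single slit sector of angular width \( 2\pi\)) and \( n=0\) (the identity chart) should be checked separately as sanity checks.
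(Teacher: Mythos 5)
Your proposal is correct, and it is essentially the classical argument: the paper itself states this theorem without proof (deferring, like all unreferenced results in that section, to Strebel), and your explicit computation of the normal chart \( \psi(z)=\frac{2}{n+2}z^{\frac{n+2}{2}} \) on each sector, together with the polar-coordinate check that it is a genuine bijection onto a half-plane, is exactly the standard proof. You are also right that the constant \( \left(\frac{2}{n+2}\right)^2 \) in the statement is a typo for \( \frac{2}{n+2} \) (the antiderivative of \( z^{n/2} \)); only the latter is an actual normal chart, i.e.\ an isometry, and your proof correctly uses it.
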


It is possible to use the theorem \ref{teo:forma-criticos-finitos} to describe explicitly the foliation around
the origin for the quadratic differentials \( \varphi=z^ndz^2\) with \( n\geq -1\).

Figure \ref{fig:foliacion-criticos-finitos} shows some of the horizontal trajectories
for those quadratic differentials with \( n<3\).

\begin{figure}[h]
\centering
\begin{subfigure}[t]{0.5\textwidth}
	\includegraphics{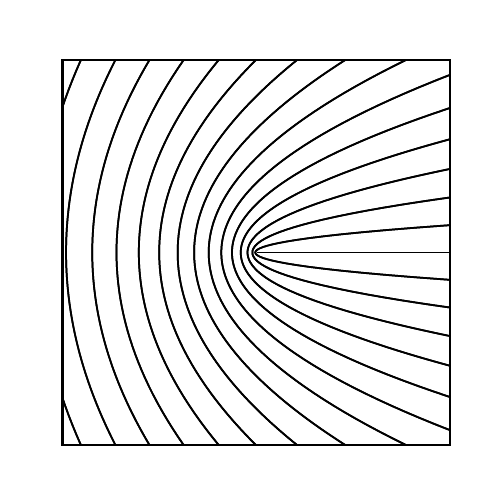}
	\caption{$\frac1zdz^2$}
\end{subfigure}
~
\begin{subfigure}[t]{0.5\textwidth}
	\includegraphics{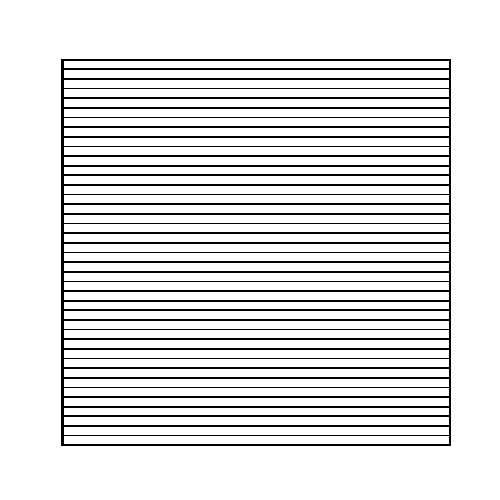}
	\caption{$dz^2$}
\end{subfigure}

\begin{subfigure}[t]{0.5\textwidth}
	\includegraphics{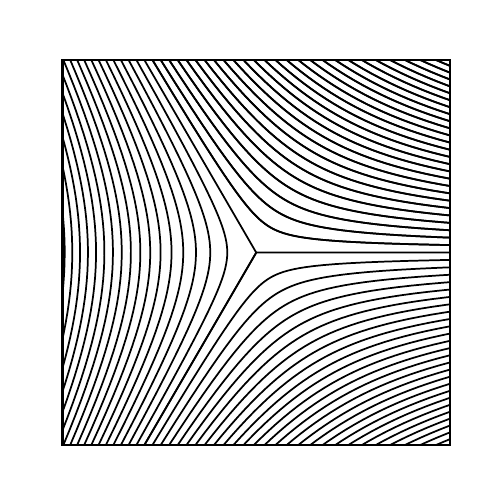}
	\caption{$zdz^2$}
\end{subfigure}
~
\begin{subfigure}[t]{0.5\textwidth}
	\includegraphics{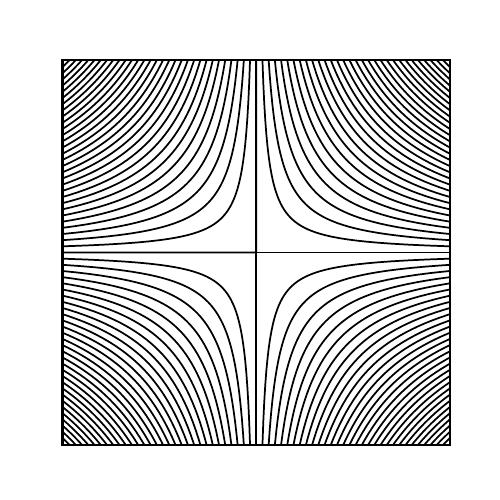}
	\caption{$z^2dz^2$}
\end{subfigure}
\caption{Horizontal trajectories of $z^ndz^2$. Here the horizontal trajectories are at a fixed distance from each other
in the metric given by the differential. This allows us to appreciate the behaviour of the metric near a finite critical point.}\label{fig:foliacion-criticos-finitos}
\end{figure}

In a similar way to the previous result we have the next theorem:

\begin{teorema}\label{teo:forma-criticos-infinitos}
	Let \( \varphi=z^{-n}dz^2\) con \( n\geq 3\).
	The \( n-2\) rays from the origin with constant angles 
	\( k\frac{2\pi}{n-2}\) where \( k=0,1,\ldots,n-3\) are leaves of the foliation of \( \varphi\).

	The rays divide the plane in \( n-2\) regions, each of which has a metric and foliation 
	isomorphic to the Euclidean metric and horizontal foliation of the upper half-plane.
	The isomorphism is given by the biholomorphism
	\( z\mapsto \left(\frac2{2-n}\right)^2z^{\frac{2-n}2}\)
	This map is well-defined in each region.
\end{teorema}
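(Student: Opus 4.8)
The plan is to mimic the proof of Theorem~\ref{teo:forma-criticos-finitos}, since the present statement is its exact analogue with a negative order, \( n\geq 3\) in place of \( n\geq -1\). Writing \( \varphi=f(z)dz^2\) with \( f(z)=z^{-n}\), I would first produce the natural parameter supplied by Proposition~\ref{cartas-normales}: away from the origin,
\[ w(z)=\int\sqrt{f(z)}\,dz=\int z^{-n/2}\,dz=\frac{2}{2-n}\,z^{\frac{2-n}{2}}, \]
for a chosen branch of \( z^{(2-n)/2}\) on any simply connected region omitting \( 0\). By construction \( w\) is a normal chart, \ie \( (w^{-1})^*\varphi=dw^2\), and this normal chart is the biholomorphism that realizes the isomorphism asserted in the statement.

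Next I would check that the \( n-2\) rays \( R_k=\{\,re^{i\theta_k}:r>0\,\}\), with \( \theta_k=k\frac{2\pi}{n-2}\), are horizontal trajectories. By Definition~\ref{def:metrica-foliacion} and the remarks following it, the leaves are the level sets of \( q\mapsto\im(w(q))\). Along \( R_k\) one has \( w(re^{i\theta_k})=\frac{2}{2-n}\,r^{(2-n)/2}e^{i\theta_k(2-n)/2}\), whose imaginary part is \( \frac{2}{2-n}\,r^{(2-n)/2}\sin\!\big(\theta_k\tfrac{2-n}{2}\big)\). Since \( \theta_k\frac{2-n}{2}=-k\pi\), the sine vanishes and \( \im(w)\equiv 0\) on each \( R_k\); as the zero level set is precisely the union of these rays (which meet only at the excluded vertex \( 0\)), each \( R_k\) is a connected component of the level set, hence a leaf.

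These rays cut the punctured plane into \( n-2\) open sectors of angular width \( \frac{2\pi}{n-2}\). Fixing one sector \( S\) and a branch of \( z^{(2-n)/2}\) on it, I would show \( w|_S\) is a biholomorphism onto a half-plane: as \( \arg z\) runs over an interval of length \( \frac{2\pi}{n-2}\), the argument of \( z^{(2-n)/2}\) runs monotonically over an interval of length \( \pi\), while \( |z^{(2-n)/2}|=|z|^{(2-n)/2}\) sweeps all of \( (0,\infty)\); multiplying by the negative real constant \( \frac{2}{2-n}\) places the image exactly on the upper half-plane \( \mathbb{H}\). Because \( w\) is a normal chart, Definition~\ref{def:metrica-foliacion} shows it carries the \( \varphi\)-metric to the Euclidean metric and the horizontal foliation to the foliation of \( \mathbb{H}\) by horizontal lines, so \( w|_S\) is the desired isomorphism.

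The only genuinely delicate points are bookkeeping ones: choosing a consistent branch of the fractional power \( z^{(2-n)/2}\) on each simply connected sector and tracking the resulting argument ranges, together with the fact that the vertex \( 0\) is the pole itself and is therefore excluded, with \( w\) sending it to \( \infty\) in agreement with the Remark that a pole lies at infinite distance. Confirming surjectivity onto all of \( \mathbb{H}\) (rather than a proper sub-sector) is exactly the computation that an opening of \( \frac{2\pi}{n-2}\) is carried to an arc of opening \( \pi\); this is also where \( n\geq 3\) is used, guaranteeing that \( (2-n)/2\) is a negative exponent so that \( r\mapsto r^{(2-n)/2}\) still covers \( (0,\infty)\). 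Apart from these, the argument is formally identical to that of Theorem~\ref{teo:forma-criticos-finitos}.
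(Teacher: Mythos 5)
Your proposal is correct, and since the paper itself states Theorem \ref{teo:forma-criticos-infinitos} without proof (it is quoted from \cite{strebel-quadratic}, as announced at the start of Section \ref{cap:geometria-diferenciales}), there is no in-paper argument to compare it against; what you wrote is precisely the natural-parameter argument that the statement, Proposition \ref{cartas-normales} and Definition \ref{def:metrica-foliacion} point to, and it is the standard proof. Two small remarks. First, your constant \( \frac{2}{2-n}\) (unsquared) is the correct coefficient of the natural parameter \( w=\int\sqrt{\varphi}\); the squared constant appearing in the theorem's statement (and in Theorem \ref{teo:forma-criticos-finitos}) is a typo, so do not be alarmed by the discrepancy. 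Second, your claim that multiplying by the negative real constant lands the image \emph{exactly} on the upper half-plane is only literally true for alternate sectors: when \( n\) is even, \( z^{(2-n)/2}\) is single-valued, and the sectors map alternately onto the upper and the lower half-plane, so for half of them you must additionally compose with \( w\mapsto -w\). This costs nothing — negation is a normal coordinate change, preserving the Euclidean metric and the horizontal foliation — but it is worth saying explicitly, since you cannot always fix it by a choice of branch. With that caveat your argument is complete.
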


Unlike the theorem \ref{teo:forma-criticos-finitos},
now every horizontal trajectory, with the possible exception of the rays, converge to the origin in both directions.

Figure \ref{fig:foliacion-criticos-infinitos} shows some of the horizontal trajectories for the differentials \( z^{-n}dz^2\) with \( n<7\).
\begin{figure}
\centering
\begin{subfigure}[t]{0.5\textwidth}
	\includegraphics{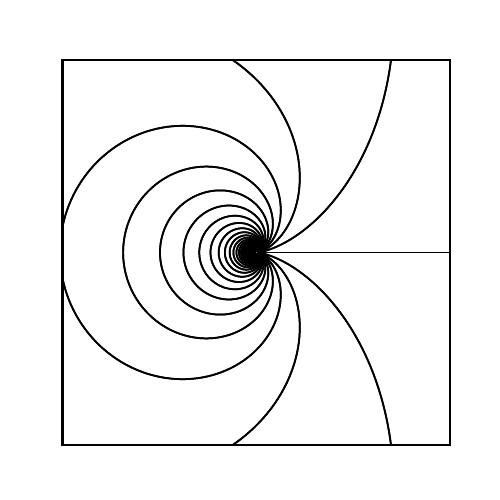}
	\caption{$z^{-3}dz^2$}
\end{subfigure}
~
\begin{subfigure}[t]{0.5\textwidth}
	\includegraphics{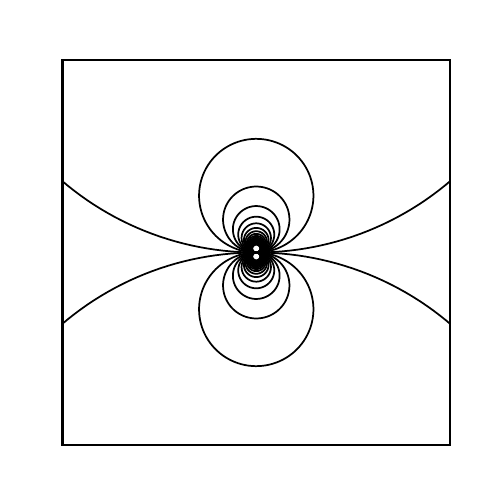}
	\caption{$z^{-4}dz^2$}
\end{subfigure}

\begin{subfigure}[t]{0.5\textwidth}
	\includegraphics{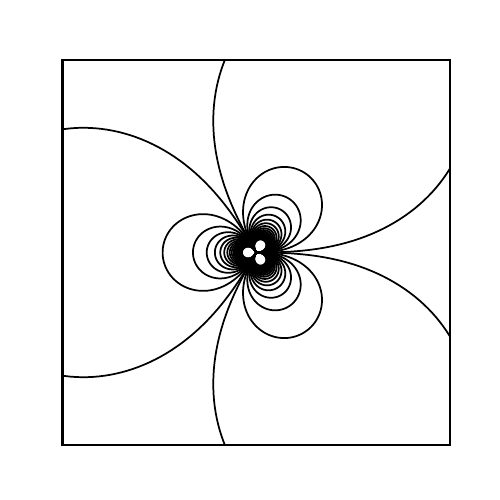}
	\caption{$z^{-5}dz^2$}
\end{subfigure}
~
\begin{subfigure}[t]{0.5\textwidth}
	\includegraphics{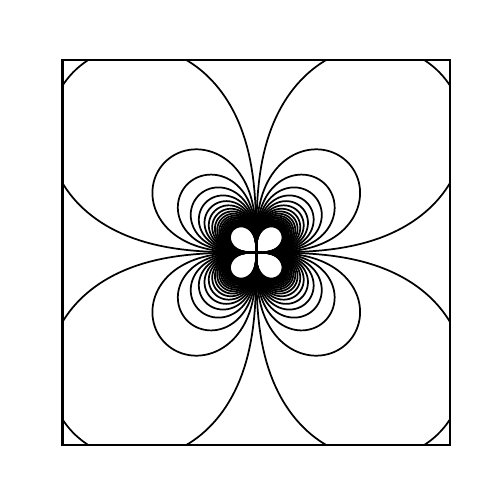}
	\caption{$z^{-6}dz^2$}
\end{subfigure}
\caption{Horizontal trajectories of $z^{-n}dz^2$. Here the trajectories are also at a fixed distance from one another.}\label{fig:foliacion-criticos-infinitos}
\end{figure}

In the case of differentials \( \left(z^{-n} +\frac b{z}\right)^2dz^2\)
with \( b\neq 0\) y \( n>1\)
it is no longer possible to find an explicit biholomorphism 
between a sector bounded by rays and the upper half-plane.
However, it is possible to prove that there is a neighbourhood of
the origin in which the foliation is isomorphic to the horizontal foliation
of \( z^{-2n}dz^2\).

Finally, the next theorem addresses the case
\( bz^{-2}dz^2\).

\begin{teorema}\label{teo:forma-polos-dobles}
	Let \( \varphi=b^2z^{-2}dz^2\) with \( b\neq 0\).
	Let \( \psi:\mathbb{C} \rightarrow \mathbb{C}\setminus \{0\}\) be the covering map:
	\[ z\xmapsto{\psi} e^{z/b}\]
	Then \( \psi\) is a local isometry between the plane and 
	\( \mathbb{C}\setminus \{0\}\) with the metric given by \( \varphi\)
	that carries horizontal lines to horizontal trajectories.
\end{teorema}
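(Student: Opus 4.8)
The plan is to reduce the whole statement to a single pullback computation, namely that $\psi$ pulls $\varphi$ back to the standard quadratic differential $dz^2$. To keep the notation unambiguous I would write $w$ for the coordinate on the source plane $\mathbb{C}$ and reserve $z$ for the coordinate on the target $\mathbb{C}\setminus\{0\}$, so that $\psi(w)=e^{w/b}$ and $\varphi=b^2z^{-2}dz^2$. Since $\exp\colon\mathbb{C}\to\mathbb{C}\setminus\{0\}$ is a covering map and $w\mapsto w/b$ is a biholomorphism of $\mathbb{C}$, the composite $\psi$ is a covering map, as asserted; in particular every point of the source has a neighbourhood on which $\psi$ restricts to a biholomorphism onto its image.

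First I would apply the pullback formula for quadratic differentials, $\psi^{*}(\varphi)=f(\psi(w))\left(\frac{d\psi}{dw}\right)^{2}dw^{2}$, with $f(z)=b^{2}z^{-2}$. Here $\frac{d\psi}{dw}=\frac{1}{b}e^{w/b}$ and $f(\psi(w))=b^{2}e^{-2w/b}$, so the two powers of $b$ and the two exponentials cancel and we obtain $\psi^{*}(\varphi)=dw^{2}$. This single identity is the heart of the theorem.

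It then remains to translate $\psi^{*}(\varphi)=dw^{2}$ into the two geometric assertions. Fix a point of the source and a neighbourhood $O$ on which $\psi$ is a biholomorphism onto an open set $U\subseteq\mathbb{C}\setminus\{0\}$. Writing $c:=\psi^{-1}\colon U\to O$, the computation above says exactly that $(c^{-1})^{*}(\varphi)=\psi^{*}(\varphi)=dw^{2}$, i.e. $c$ is a normal chart for $\varphi$ in the sense of Proposition~\ref{cartas-normales}. By Definition~\ref{def:metrica-foliacion} the metric of $\varphi$ is, by construction, the one that equals the Euclidean metric in every normal chart, and the horizontal foliation of $\varphi$ is the pullback of the horizontal foliation under a normal chart. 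Hence $c$ is an isometry from the $\varphi$-metric on $U$ to the Euclidean metric on $O$ carrying horizontal trajectories to horizontal line segments, and so its inverse $\psi|_{O}$ is an isometry in the other direction sending horizontal lines to horizontal trajectories. Since being a local isometry and preserving the foliations are local conditions and the chosen point was arbitrary, $\psi$ is a local isometry carrying horizontal lines to horizontal trajectories on all of $\mathbb{C}$.

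I do not expect a serious obstacle: the computation is immediate and the geometric conclusions are essentially definitional once $\psi^{*}(\varphi)=dw^{2}$ is established. The one point requiring care is that $\psi$ has no global inverse, so one cannot name a single global normal chart; instead one must argue locally, using that $\psi$ is a local biholomorphism, and then globalise by appealing to the local nature of both the metric and the foliation. As an independent sanity check one can verify the metric statement directly from the formula $|\sqrt{f}|\,|dz|$: the $\varphi$-metric is $|b|\,|dz|/|z|$, and substituting $z=e^{w/b}$, $|dz|=(|z|/|b|)\,|dw|$, yields exactly $|dw|$, the Euclidean length element.
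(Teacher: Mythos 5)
Your proposal is correct: the pullback computation $\psi^{*}(\varphi)=dw^{2}$ is right, and the translation into the metric and foliation statements via normal charts (Proposition \ref{cartas-normales} and Definition \ref{def:metrica-foliacion}) is exactly the definitional argument needed, with the local-to-global step handled properly since you note $\psi$ is only a local biholomorphism. There is nothing in the paper to compare against: this theorem is one of the classical results the paper states without proof, deferring to Strebel as announced at the start of Section \ref{cap:geometria-diferenciales}, and your argument is the natural one within the paper's own framework. The only point you leave implicit is that the image of a horizontal line is the \emph{entire} trajectory rather than merely contained in one; this follows from your observation that $\psi$ is a covering map (paths in a leaf downstairs lift to paths in the corresponding leaf upstairs), or simply from the explicit form of the images as rays, circles, or logarithmic spirals.
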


Thus, we conclude that if \( b\) is a pure imaginary number,  the foliation is given by concentric circles.
If \( b\) is real, the foliation consists of rays converging to the origin, and all the other cases are logarithmic spirals.
In particular, if \( b\) is not a pure imaginary number, then every horizontal trajectory converges to
the origin in at least one direction.

To end this subsection, we summarize the previous results in the next remarks:
\begin{itemize}
		 \item Around any infinite critical point there is a neighbourhood with the property that any
			 horizontal trajectory that intersects the neighbourhood converges to the critical point
			 in at least one direction.

		\item Infinite critical points are at an infinite distance from the regular points.

		\item Around any finite critical point there is a neighbourhood with the property that exactly 
			\( n+2\) horizontal trajectories converge to the critical point.

		\item Finite critical points are at a finite distance from any regular point.
\end{itemize}

\begin{proof}[Sketch of proof of theorem \ref{teo:criticos}]
	Let's tackle the case when \( p\) is an odd order zero of the quadratic differential.
	Let \( \xi\) be a chart centered at  \( p\). In this chart we have:
	\[ \varphi=\xi^{2k+1}f(\xi)d\xi^2\]
	where \( k\) is a non-negative integer, \( f\) is holomorphic and doesn't vanish at zero.
	
	As a first step, consider the abelian differential
	\[\omega = \sqrt{\varphi}=\xi^{\frac{2k+1}{2}}\sqrt{f(\xi)}d\xi \]
	it is well defined on open sets close enough to zero.

	If \(z\) is a chart centered at \( p\), then
	\[ \varphi=z^*(Cz^{2k+1}dz^2)\]
	if and only if
	\[ \omega=z^*(\pm Kz^{\frac{2k+1}{2}}dz),\]
	where \( K^2=C\), 
	so it is enough to construct a chart
	 \( z\) with this property.

	 It is not difficult to see that finding said chart is the same as solving the complex ordinary differential equation:
	\[ \xi^{\frac{2k+1}{2}}\sqrt{f(\xi)}=\pm Kz(\xi)^{\frac{2k+1}{2}}\frac{dz(\xi)}{d\xi}.\]
	To solve this ODE we can use the power series of  \( \sqrt{f(\xi)}\) 
	and integrate term by term to get:
	\[ \xi^{\frac{2k+3}{2}}g(\xi)=\int\frac{dz(\xi)^\frac{2k+3}{2}}{d\xi}d\xi=z(\xi)^\frac{2k+3}{2},\]
	which in turn gives:
	\[ z(\xi):=\xi g(\xi)^{\frac{2}{2k+3}}\]
	where \( g(\xi)^{\frac{2}{2k+3}}\) is well defined and single-valued in a neighbourhood of the origin.
	This is due to \( g(0)\neq 0\), and the fact that \( z\) is a biholomorphism near the origin because \( \frac{dz(\xi)}{d\xi}(0)=g(0)^{\frac{2}{2k+3}}\neq 0\).

	This concludes the proof of  \ref{teo:criticos} for the case of odd order zeros.
	The case of even order zeros and the case of odd order poles can be handled in a similar fashion.
	The only case slightly different is that of even order poles.
	The problem in this case is that solving the ODE involves
	integrating an expression of the form 
	\[\xi^{-n}\sqrt{f(\xi)} \]
	where logarithmic terms could appear.
	That is the reason why in this case it is only possible to find
	charts where
	\[ \varphi= \left( Cz^k +bz^{-1} \right)^2dz^2.\]
\end{proof}

\subsection{Separating trajectories and decomposition of the surface}

With the previous results, in this subsection we will give a first (very coarse) classification of horizontal trajectories of a quadratic differential.

\begin{definicion}
Let \( \gamma\) be a horizontal trajectory of the quadratic differential  \( \varphi\). We shall say that \( \gamma\) is:
	\begin{itemize}
			 \item \emph{generic}, if in both directions converges to infinite critical points;
			 \item a \emph{saddle} trajectory, if in both directions converges to finite critical points;
			 \item \emph{separating} if in one direction converges to a finite critical point, and in the other to an infinite critical point;
			\item \emph{periodic} if it is diffeomorphic to a circle;
			\item \emph{divergent} in all other cases.
				\footnote{The term \emph{divergent} can be missleading: given that the surface is compact, every trajectory has limit points.
In fact, a trajectory is divergent if in at least one direction has more than one limit point. See \cite[p.~45]{strebel-quadratic}.}

	\end{itemize}
\end{definicion}

Note that no trajectory can be in more than one of the previous categories.

From now on, when we speak of a parametrization of a horizontal trajectory
\( \gamma\) we shall assume that it is a unit speed parametrization, \ie,
\( |\dot{\gamma}|=1\), or equivalently, in a normal chart \(\gamma\) satisfies
\( \gamma(t)-\gamma(s)=\pm|t-s|\).

One of the basic tools that we will use several times is the next proposition,
that allows us to embed any segment of a horizontal trajectory in a
rectangle foliated by horizontal trajectories of the same length:

\begin{proposicion}
Let \( \gamma\) be a horizontal trajectory defined on the interval  \( [a,b]\),
then there is an \( \epsilon >0 \) and a normal chart \( z\) such that:
\( z(\gamma)=[a,b]\subseteq \mathbb{C}\) and 
\( R=\{z\in \mathbb{C}|a<\re(z)<b, |\im(z)|<\epsilon\}\) is a subset of the
image of \( z\).
\end{proposicion}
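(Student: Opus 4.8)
The plan is to produce a single normal coordinate on a neighbourhood of the whole arc $K:=\gamma([a,b])$ and then to verify that it is genuinely injective there. First I observe that $K$ is compact and, being the image of a segment of a horizontal trajectory, consists entirely of regular points, since no leaf of the foliation passes through a critical point. I will also use that the segment is simple, i.e. $\gamma$ is injective on $[a,b]$, so that $K$ is an embedded arc (for a periodic trajectory this merely amounts to taking $b-a$ smaller than its length). Locally the statement is trivial, because around every regular point the metric and foliation are flat and a normal chart exists (Proposition \ref{cartas-normales}); all the content is therefore in the global gluing and in the injectivity of the resulting chart.

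Next I would build the normal coordinate as a developing map. Because $K$ is an embedded compact arc of regular points and the critical points are isolated, for small enough transverse width the tube $\Omega$ of radius $\delta$ around $K$ (the image of $[a,b]\times(-\delta,\delta)$ under the normal exponential map of the flat metric) is an embedded, hence simply connected, neighbourhood of $K$ containing no critical point. On $\Omega$ the section $\varphi$ has no zeros, so it admits a holomorphic square root: there is a nonvanishing holomorphic $1$-form $\omega$ on $\Omega$ with $\omega\otimes\omega=\varphi$, unique up to sign. Fixing the branch and setting
\[
\psi(q):=a+\int_{\gamma(a)}^{q}\omega ,
\]
the integral is path-independent on the simply connected set $\Omega$, so $\psi$ is single-valued and holomorphic with $d\psi=\omega$, whence $\varphi=d\psi^{2}$ and $\psi$ is a normal coordinate; in particular it is a local biholomorphism. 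Choosing the sign of $\omega$ appropriately, along the unit-speed trajectory one gets $\tfrac{d}{dt}(\psi\circ\gamma)=\omega(\dot\gamma)=1$, and since $\psi(\gamma(a))=a$ this yields $\psi(\gamma(t))=t$ for all $t\in[a,b]$.

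Finally I would upgrade $\psi$ from a local biholomorphism to an honest chart and extract the rectangle. Since $\psi(\gamma(t))=t$ and $\gamma$ is injective, $\psi$ is injective on the compact set $K$; combined with the fact that $\psi$ is a local homeomorphism, a standard compactness argument (if injectivity failed on every neighbourhood one could extract sequences $x_n\neq y_n$ with $\psi(x_n)=\psi(y_n)$ converging to a common point of $K$, contradicting local injectivity there) shows that $\psi$ is injective on some open neighbourhood $N\subseteq\Omega$ of $K$. Then $z:=\psi|_{N}$ is a normal chart with $z(\gamma)=[a,b]$, and $z(N)$ is an open subset of $\mathbb{C}$ containing the compact interval $[a,b]$; hence there is $\epsilon>0$ with $\{w:\operatorname{dist}(w,[a,b])<\epsilon\}\subseteq z(N)$, and this $\epsilon$-tube contains the rectangle $R=\{w:\ a<\re(w)<b,\ |\im(w)|<\epsilon\}$, which is therefore contained in the image of $z$. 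I expect the main obstacle to be precisely this last, purely global point: the trajectory may return arbitrarily close to itself, so injectivity of the chart on a full neighbourhood of $K$ cannot be arranged locally and must instead be forced by compactness of $[a,b]$ together with shrinking $\epsilon$.
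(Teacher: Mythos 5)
Your proposal is correct, but it takes a genuinely different route from the paper's. The paper covers the segment by finitely many normal charts, each sending the trajectory into the real axis, gets a uniform rectangle height \( \epsilon \) by compactness, and then glues the charts into one using the rigidity of normal-chart transitions \( z\mapsto \pm z+b \); the normal exponential map is mentioned there only as an alternative. You instead produce one global coordinate: a tubular neighbourhood \( \Omega \) of the arc, simply connected and free of critical points, carries a holomorphic square root \( \omega \) of \( \varphi \), and \( \psi = a+\int\omega \) is single-valued by simple connectivity; injectivity near the arc is then forced by your compactness argument. Your route makes explicit exactly what the paper's sketch glosses over: if the trajectory returns close to itself, non-consecutive charts of the paper's cover overlap in the surface with values differing by a large translation, and no adjustment by \( z\mapsto\pm z+b \) can reconcile them — that difficulty is only resolved by shrinking the transverse width, i.e.\ by the ``injective on a compact set plus local homeomorphism implies injective on a neighbourhood'' argument you spell out. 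You are also more careful than the paper in noting that the statement implicitly requires \( \gamma|_{[a,b]} \) to be simple, since a chart is injective (this only excludes periodic trajectories of period at most \( b-a \); non-periodic leaves are automatically injectively immersed). Two small repairs would tighten the argument: first, your tube is the image of \( [a,b]\times(-\delta,\delta) \), so it is \emph{not} an open neighbourhood of the endpoints \( \gamma(a),\gamma(b) \), and hence \( z(N) \) need not contain discs around \( a \) and \( b \); since the leaf through a regular point extends past any compact subarc, simply run the construction on \( [a-\eta,b+\eta] \) for small \( \eta>0 \) (this costs nothing, as the target rectangle \( R \) is open and avoids the vertical edges anyway). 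Second, the embeddedness of the tube that you assert at the outset is itself proved by the very compactness argument you later apply to \( \psi \); proving it once, for the tube, would let you conclude directly that the inverse of the normal exponential map is the desired chart (the metric being flat), which is precisely the paper's alternative remark — your integration step then becomes a clean way of seeing that this chart is holomorphic and normal.
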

\begin{proof}[Sketch of proof]
	For every point of \( \gamma\) there is a normal chart
	such that the image of \(\gamma\) is a subset of the real line.
	Hence, there is some
	rectangle of a given height contained in the image of the chart and that is symmetric with respect to the real line.
	By compactness, we can cover \(\gamma\) with a finite number of this charts.
	Thus, there is an \( \epsilon>0\)
	such that any of this charts covers a rectangle with height greater than
	\( \epsilon\).

	Observe that composing with translations and reflections, we can
	modify the charts so that they are compatible on the overlaps.
	This is due to the specific nature of the change of coordinate maps of normal charts.
	In this way we can take the union of the charts. This is a new chart
	with the required properties.

	Another approach to this proposition is to take the exponential mapping
	along the normal directions to \(\gamma\). Since the metric is flat, this is a local isometry.
\end{proof}
\begin{corolario}\label{cor:rectangulo-cubriente}
	If there is a vertical segment of length \( \delta\), whose initial point is  \( \gamma(a)\) and with the property that for any point in the segment,
	the horizontal trajectory starting at that point has a length
	of at least \(b-a\) in both directions, then
	there is a local isometry
	\[ \psi: \{z\in\mathbb{C}|a<\re(z)<b, 0\leq \im(z)<\delta\}\rightarrow X\]
	such that \( \psi([a,b])=\gamma\),
	\( \psi (i[0,\delta])\) is the vertical segment and  \( \psi^{-1}\) is locally a normal chart.
\end{corolario}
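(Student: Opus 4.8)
The plan is to realize $\psi$ as the \emph{horizontal flow} of the vertical segment and then to verify it is a local isometry by comparing it, height by height, with the thin rectangles produced by the preceding proposition. First I would parametrize the vertical segment by unit speed as $s\mapsto v(s)$, $s\in[0,\delta)$, with $v(0)=\gamma(a)$. For each $s$ let $\gamma_s$ denote the horizontal trajectory through $v(s)$, taken with unit speed and normalized so that $\gamma_s(a)=v(s)$ and $\gamma_0=\gamma$, the orientation being fixed by continuity in $s$. The length hypothesis guarantees that each $\gamma_s$ is defined on all of $[a,b]$, so the formula $\psi(t+is):=\gamma_s(t)$ makes sense on the rectangle $\{a<\re(z)<b,\ 0\le\im(z)<\delta\}$. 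By construction $\psi$ restricts to $\gamma$ on the bottom edge $\{\im(z)=0\}$, and it extends continuously to the left edge with $\psi(a+is)=v(s)$, giving the two required boundary identifications; the remaining and essential point is that $\psi$ is a local isometry whose local inverses are normal charts.

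For this, fix a height $\sigma\in[0,\delta)$ and apply the preceding proposition to the trajectory $\gamma_\sigma|_{[a,b]}$. This produces a normal chart $z_\sigma$ with $z_\sigma(\gamma_\sigma(t))=t$ whose image contains a symmetric strip $\{a<\re(z)<b,\ |\im(z)|<\epsilon_\sigma\}$ for some $\epsilon_\sigma>0$; its inverse $\psi_\sigma:=z_\sigma^{-1}$ is a local isometry carrying horizontal lines to horizontal trajectories. After the harmless reindexing $\hat\psi_\sigma(t+is):=\psi_\sigma(t+i(s-\sigma))$ I obtain a local isometry on the strip $\{a<\re(z)<b,\ |s-\sigma|<\epsilon_\sigma\}$. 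I would then check that $\hat\psi_\sigma$ coincides there with the flow formula: since the vertical segment is a unit-speed trajectory orthogonal to $\gamma$, a normal chart sends it to a vertical line whose $\im$-coordinate equals arc length, so $\hat\psi_\sigma(a+is)=v(s)$; consequently the horizontal line at height $s$ is the unit-speed horizontal trajectory through $v(s)$, that is, $\gamma_s$. Hence $\hat\psi_\sigma(t+is)=\gamma_s(t)=\psi(t+is)$ on the strip.

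The strips $\{|s-\sigma|<\epsilon_\sigma\}$, $\sigma\in[0,\delta)$, cover the height interval $[0,\delta)$, the strip at $\sigma=0$ accounting for the closed bottom edge, so the reindexed charts cover the whole rectangle. Since $\psi$ agrees with some local isometry $\hat\psi_\sigma$ near every point, $\psi$ is itself a local isometry and its local inverses are normal charts, as required. Where two strips overlap there is nothing to reconcile, because each $\hat\psi_\sigma$ has already been identified with the single intrinsic formula $\gamma_s(t)$; equivalently, any two normal charts covering a common horizontal trajectory differ by a transition $z\mapsto\pm z+c$, and matching the trajectory, its orientation, and the base point $v(s)$ forces the $+$ sign and $c=0$.

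I expect the main obstacle to be precisely this coherence step: proving that the height-by-height local models assemble into one globally defined local isometry. The whole argument rests on the rigidity of normal-coordinate changes established earlier — the transitions are only $z\mapsto\pm z+c$ — which is what rules out any monodromy as the base point travels up the vertical segment and lets the thin rectangles be glued without ambiguity. One should also confirm along the way that the hypotheses keep the image free of critical points, so that the normal charts of the preceding proposition are available at every height.
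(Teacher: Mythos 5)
Correct, and essentially the paper's own (implicit) argument: the corollary is stated as an immediate consequence of the preceding proposition, and your height-by-height application of that proposition to each trajectory \( \gamma_\sigma \), glued into the single flow formula \( \psi(t+is)=\gamma_s(t) \) via the rigidity \( z\mapsto \pm z + c \) of normal-chart transitions, is exactly that derivation (it is also the ``exponential map along normal directions'' variant mentioned in the proposition's proof sketch, run from the vertical segment instead of from \( \gamma \)). The only caveat is the tacit orientation convention you share with the paper: the vertical segment must leave \( \gamma(a) \) on the side making (horizontal, vertical) positively oriented, since otherwise your \( \psi \) is anti-holomorphic and \( \psi^{-1} \) is the conjugate of a normal chart rather than a normal chart — but the statement itself is only true under that convention, so nothing is lost.
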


Let \( \gamma\) be a divergent trajectory.
Suppose it is defined on a maximal interval:
\[ \gamma: (a,b)\rightarrow X.\]
Because \( \gamma\) is divergent, in at least one direction it does not converge to
a critical point.
Since \( X\) is compact,
\(\gamma\) can be extended indefinitely in this direction.
Without loss of generality, we can assume that \( \gamma\) is defined on 
the interval \( (a,\infty)\).

Let \( a<t_0<t_1<\ldots\) be any increasing sequence of real numbers such that \({ t_n \to \infty}\), then, again by the compactness of  \( X\),
\( \gamma(t_n)\) has at least one accumulation point  \( p\).
The point \( p\) cannot be an infinite critical point because every trajectory close enough to an infinite critical point converges towards it.
By the same reason, if  \(p\) is a finite critical point,
\( \gamma(t_n)\) cannot be on the critical directions.
In this case, every trajectory through the points \( \gamma(t_n)\) is also a part
of  \( \gamma\) hence, at least one of the critical rays are a subset of
the limit points of  \( \gamma\).

In any case, a divergent trajectory has more than one limit point.
This statement can be sharpened as follows:

\begin{proposicion}\label{prop:divergente-recurrente}
	Let \( \gamma\) be a divergent trajectory. Assume that \( \gamma\) is
	parametrized in such a way that the set of limit points of  \( \gamma(t)\) when \( t \to \infty\) has more than one point.
	Let \( B\) be a vertical interval with initial point  \( P_0=\gamma(t_0)\), then, for any point \( P_1=\gamma(t_1)\) after  \( P_0\) (\ie \( t_1>t_0\))
	there is another point \( P_2\) after  \( P_1\) such that \( \gamma\) intersects \( B\) at \( P_2\) 
	in the same direction as  \( P_0\).
\end{proposicion}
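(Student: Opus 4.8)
The plan is to realize $B$ as a cross-section of the horizontal flow near $P_0$, to show that the forward orbit of $P_0$ is recurrent so that it meets this cross-section for arbitrarily large times, and finally to control the transverse direction of these crossings. First I would fix a short initial sub-interval $B_0\subseteq B$ issuing from $P_0$ and apply Corollary~\ref{cor:rectangulo-cubriente} to build a flow box over it: an embedded rectangle $R=\psi\big((-\eta,\eta)\times[0,\delta)\big)$ with $\psi(0,0)=P_0$, whose horizontal slices $\psi\big((-\eta,\eta)\times\{s\}\big)$ are arcs of horizontal trajectories and whose edge $\psi\big(\{0\}\times[0,\delta)\big)$ is $B_0$. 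Normalising the normal chart so that $\gamma$ traverses the bottom slice $s=0$ in the direction of increasing $\re$, ``crossing $B$ in the same direction as $P_0$'' means crossing the vertical edge $\{\re=0\}$ from $\re<0$ to $\re>0$ at some height $s>0$. Because $\gamma$ is a non-closed leaf of a foliation it is injective, and because $B_0$ is vertical while trajectories are horizontal every crossing is transverse.

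Next I would establish the recurrence of $P_0$, i.e. that $P_0$ lies in the $\omega$-limit set $\Lambda$ of $\gamma$. The set $\Lambda$ is nonempty, compact, connected and invariant under the horizontal flow, and by hypothesis has more than one point; as in the discussion preceding the statement one produces a regular point $q\in\Lambda$. Building a flow box at $q$ and using a sequence $\gamma(s_n)\to q$ with $s_n\to\infty$, the trajectory crosses the central transversal at $q$ infinitely often at points accumulating on it; propagating these crossings by the local flow shows that $\gamma$ accumulates on every point of its own closure, so that $\gamma\subseteq\Lambda$ and in particular $P_0\in\Lambda$. Hence there is a sequence $t_n\to\infty$ with $\gamma(t_n)\to P_0$.

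For $n$ large, $\gamma(t_n)\in R$, lying on the slice at some height $s_n\to 0$; the slice through it crosses $B_0$ at $\psi(0,s_n)$, a point reached from $\gamma(t_n)$ by flowing a distance at most $\eta$, so the corresponding parameter $t_n'$ satisfies $|t_n'-t_n|\le\eta$ and hence $t_n'\to\infty$. Injectivity of $\gamma$ forces $s_n\neq 0$ for large $n$: otherwise $\gamma(t_n)$ would lie on the bottom slice $\gamma\big|_{(t_0-\eta,\,t_0+\eta)}$, contradicting $t_n\to\infty$. Thus the crossing points $\psi(0,s_n)$ are genuinely distinct from $P_0$, and choosing $n$ with $t_n'>t_1$ produces a candidate for $P_2$, provided the crossing sits at positive height (on the $B$-side) and is co-oriented with $P_0$.

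The hard part is exactly this last orientation bookkeeping: \emph{a priori} $\gamma$ could return at negative height (on the opposite extension of $\hat B=\psi(\{0\}\times(-\delta,\delta))$) or cross $B_0$ leftwards, and the crossing direction is \emph{not} forced by $\gamma(t_n)\to P_0$ alone, since a trajectory may pass near $P_0$ travelling either parallel or antiparallel to $\dot\gamma(t_0)$. I would resolve it by a Jordan-curve argument on consecutive crossings of $\hat B$: the arc of $\gamma$ between two consecutive crossings, together with the intervening sub-segment of $\hat B$, bounds a disk across whose transversal edge the foliation points consistently to one side; this shows that the orbit can only re-cross $\hat B$ co-oriented with its entry, so that all returns share the transverse direction in which $\gamma$ leaves $P_0$ and accumulate at $P_0$ monotonically from one side. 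Selecting a same-direction crossing lying on $B$ with parameter exceeding $t_1$ then yields the required point $P_2$. Making this co-orientation statement precise (equivalently, proving the first-return map to $\hat B$ is a monotone, orientation-preserving interval map) together with the self-limiting property $\gamma\subseteq\Lambda$ is where essentially all the work lies.
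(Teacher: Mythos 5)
Your plan leaves its central step unproven, and the strategy you propose for it cannot be completed as described. The crux is your claim that ``propagating these crossings by the local flow shows that $\gamma$ accumulates on every point of its own closure, so that $\gamma\subseteq\Lambda$''. What flow-box propagation actually gives is that the set of points where $\gamma$ accumulates is closed and saturated (it contains the whole leaf through $q$, hence all of $\Lambda$ by invariance); it does not put $\gamma$ itself inside $\Lambda$ --- that is exactly the recurrence statement being proved, so the step is circular. Worse, no purely topological argument of this kind can succeed: for foliations on surfaces lacking the metric structure of a quadratic differential the statement is simply false. In a Denjoy flow on the torus every gap leaf is ``divergent'' in the sense of the paper, its $\omega$-limit set is the exceptional minimal set (which consists of regular points), yet the leaf is not recurrent; Cherry flows give similar examples with singularities. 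What makes the horizontal foliation of a quadratic differential special --- and what your proposal never uses --- is that flow boxes are \emph{isometric} images and that the complement of a neighbourhood of the infinite critical points has \emph{finite area}. This is precisely how the paper argues: after shrinking $B$ so that no trajectory through it is periodic or converges to a finite critical point, Corollary \ref{cor:rectangulo-cubriente} extends the flow box over $B$ to a local isometry $\psi$ defined on a half-infinite strip $\{z\in\mathbb{C}\mid \re(z)\geq t_0,\ |\im(z)|<\delta\}$; since the image of $\psi$ avoids a neighbourhood of the infinite critical points, it has finite area, so $\psi$ cannot be injective on any tail $\{\re(z)>t\}$ (a Poincar\'e-recurrence argument), and the resulting overlaps produce returns of $\gamma$ to $B$ at arbitrarily large times, the choice $\delta<\frac13\,\mathrm{length}(B)$ forcing these returns to land on $B$ itself and in the same direction.

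Your second admitted gap also rests on a flawed fix: the arc of $\gamma$ between two consecutive crossings of $\hat B$ together with the intervening sub-arc of $\hat B$ is a closed curve on a surface of possibly positive genus, and such a curve need not bound a disk, so the Jordan-curve argument for monotonicity and co-orientation of the return map does not apply as stated. This is not a pedantic point: the foliation of a GMN differential is non-orientable near its simple zeros, so consistency of crossing directions is genuinely delicate; in the paper it comes out of the overlap structure of the isometric strip and the choice of $\delta$, with details deferred to Theorem 11.1 of Strebel. In short, the two steps you flag as ``where essentially all the work lies'' constitute the entire content of the proposition, and the missing ingredient in both is the finite-area/local-isometry argument, not more careful topology.
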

\begin{proof}
	Taking the interval
	\( B\) small enough, we can assume that no trajectory starting at a point in \(B\) is periodic nor converges to a finite critical point.

	This guaranties that any trajectory intersecting \(B\) can be extended indefinitely in the same direction as \(\gamma\)

	Corollary \ref{cor:rectangulo-cubriente} allows us to find a local isometry
\[ \psi:\{z\in\mathbb{C} \vert \re(z)\geq t_0, |\im(z)|<\delta\}\rightarrow X\]
where \( \delta\) is than a third of the length of \( B\).

If any of the trajectories starting from \( B\) converges to an infinite critical point, then, since  \( \gamma\) is always at a finite distance of this trajectories,  \( \gamma\) should aslso converge to the critical point, which contradicts the fact that \(\gamma \) is divergent.

This statement can be sharpened as follows: there is an open set  \(  U\)
that is a neighbourhood of all the infinite critical points and such that the image of  \( \psi\) is disjoint from \(  U\). This can be proved using the fact that
the image of \( \psi\) has finite area.

So far we have showed that \( \psi\) is not injective in any subrectangle 
\[{R_t:=\{z\in \mathbb{C} \vert t<\re(z) , |\im(z)|<\delta\} }\]
otherwise, the image should have infinite area.
\footnote{Note that once we have  proved that  \( \psi \) is defined
	for every  \( t>t_0\) and that its image has finite area, the proof is
	essentially the same as Poincare recurrence theorem.}

	Finally, if \( t>t_1\) is such that the image of  the vertical interval at \( t\) under  \( \psi\) meets the original interval, then by the choice of \( \delta\), we can guarantee that  \( \gamma(t)=\psi(t+i0)\) meets \( B\).%
\footnote{For more details on this proof, see the theorem  \( 11.1\) on  \cite[p.~48]{strebel-quadratic}.}
\end{proof}

We have just shown that every divergent trajectory is actually \emph{recurrent}, \ie, it is itself a subset of its limit points.

Together with \ref{cor:rectangulo-cubriente}, the last proposition is one of
the fundamental tools in the classification of neighbourhoods of trajectories:

\begin{proposicion}
	Let \( \gamma\) be a horizontal trajectory. Then:
	\begin{itemize}
			 \item If \( \gamma\) is generic, there is a one-parameter
				 family of generic trajectories around \(\gamma\) \ie, there is a biholomorphism 
				\[ \psi: \{z\in\mathbb{C} \big\vert~ |\im(z)|<\epsilon \}\rightarrow  U\subseteq X\]
				 such that \( \gamma = \psi(\mathbb{R})\) y \( \psi^{-1}\) is a normal chart.
			\item If \( \gamma\) is periodic, in the same way as the last case, there is a  one-parameter
				family of periodic trajectories around \(\gamma\), all of the same length, \ie, there is a covering map, periodic in the horizontal direction 
				\[ \psi: \{z\in \mathbb{C} \big\vert ~|\im(z)|<\epsilon\}\rightarrow  U\]
				 such that \( \gamma = \psi(\mathbb{R})\) and  \( \psi^{-1}\) is locally a normal chart.
			\item If \( \gamma\) is divergent, its closure is a domain
				such that every trajectory in its interior is also divergent, and the boundary is made up of a union of saddle trajectories and zeros of the differential.

	\end{itemize}
\end{proposicion}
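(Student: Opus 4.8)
The plan is to treat the three cases separately; the generic and periodic cases share a single flow-box construction built from Corollary~\ref{cor:rectangulo-cubriente}, so I would dispatch them first and concentrate the real effort on the divergent case. In every case the starting data is a base point $p=\gamma(0)$, a short transverse (vertical) segment $B$ through $p$, and the remark that by continuous dependence the trajectories issuing from points of $B$ close enough to $p$ shadow $\gamma$ for as long as we wish. For the generic case, since $\gamma$ converges to infinite critical points in both directions, the first summary remark gives neighbourhoods $W_\pm$ of these points such that any trajectory meeting $W_\pm$ converges to the corresponding point; shrinking $B$, every trajectory through $B$ then enters both $W_+$ and $W_-$ and is therefore generic with the same pair of limit points. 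The part of $\gamma$ outside $W_+\cup W_-$ has finite length, so Corollary~\ref{cor:rectangulo-cubriente} produces a local isometry from a rectangle covering it, while inside $W_\pm$ the product structure of Theorems~\ref{teo:forma-criticos-infinitos} and \ref{teo:forma-polos-dobles} prolongs the chart out to the critical point; gluing yields $\psi\colon\{|\im z|<\epsilon\}\to U$ with $\psi(\mathbb{R})=\gamma$ and $\psi^{-1}$ a normal chart. For injectivity I would use that distinct leaves are disjoint and that a generic $\gamma$ never returns near $B$, so after shrinking $\epsilon$ the map $\psi$ is a biholomorphism onto its image.

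For the periodic case the same flow box applies; the only new input is the first-return (Poincaré) map $P$ on $B$. Because the transition maps between normal charts are exactly $z\mapsto\pm z+b$, the holonomy around the closed leaf acts on $B$ as an isometry $x\mapsto\epsilon x+c$ with $\epsilon=\pm1$. A closed leaf is traversed by the flow with $\dot\gamma$ returning to itself, so the leaf-tangent direction is preserved; since the sign $\epsilon$ acts simultaneously on the real and imaginary parts, this forces $\epsilon=+1$, so $P$ is an orientation-preserving isometry of $B$ fixing $p$, hence the identity. Consequently every nearby trajectory closes up with the common length of $\gamma$, and $\psi$ becomes a covering map of the strip, periodic in the horizontal direction, onto a cylinder $U$, with $\psi^{-1}$ locally a normal chart.

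The divergent case is where I expect the real work to lie. By Proposition~\ref{prop:divergente-recurrente} the trajectory $\gamma$ is recurrent, and the argument in its proof supplies a neighbourhood of the infinite critical points disjoint from $\gamma$, so $\overline{\gamma}$ stays at finite distance from every infinite critical point. I would first show $\overline{\gamma}$ has nonempty interior: the first-return map on a transversal $B$ is a (piecewise) isometry by flatness, recurrence forces the orbit of $\gamma\cap B$ to be dense in a subinterval of $B$, and flowing that subinterval through the flow box of Corollary~\ref{cor:rectangulo-cubriente} sweeps out an open subset of $\overline{\gamma}$. Next, every interior trajectory is a limit of translates of $\gamma$, hence itself recurrent, and it can be neither periodic, nor generic, nor a saddle trajectory without separating the region and contradicting this density, so the interior is saturated and made up entirely of divergent trajectories. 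Finally the boundary contains no infinite critical point (by the finite-distance fact) and no generic or periodic trajectory (each of those has a whole neighbourhood of trajectories of its own kind), so every boundary trajectory must converge to finite critical points in both directions, that is, be a saddle trajectory, with the zeros themselves occurring as the endpoints; this gives $\partial\overline{\gamma}$ as a union of saddle trajectories and zeros. The main obstacle is precisely the density step, a Poincar\'e-recurrence / interval-exchange argument; I would model it on Theorem~$11.1$ of \cite{strebel-quadratic}, exactly as in the proof of Proposition~\ref{prop:divergente-recurrente}.
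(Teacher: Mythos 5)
A preliminary remark on the comparison itself: the paper never proves this proposition --- it is one of the statements imported wholesale from \cite{strebel-quadratic} under the convention announced at the beginning of Section \ref{cap:geometria-diferenciales} --- so your proposal can only be measured against the standard argument in that source. Your generic and periodic cases are fine and are essentially that argument: the flow-box gluing via Corollary \ref{cor:rectangulo-cubriente}, and, for the periodic case, the observation that the holonomy of a closed leaf has the form \( z\mapsto \pm z + c\) while the returning tangent direction of the leaf forces the sign \( +1\), so the holonomy is the identity. That part is correct.

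The genuine gap is in the divergent case, exactly where you hand-wave. The step ``every interior trajectory is a limit of translates of \( \gamma\), hence itself recurrent'' is a non sequitur: saddle trajectories contained in \( \overline\gamma\) are also limits of \( \gamma\) and are not recurrent. The attempted repair --- that a saddle trajectory in the interior would ``separate the region and contradict density'' --- cannot be made to work, because a saddle trajectory need not separate anything; and in fact the claim you are trying to prove is false in its literal topological reading. Concretely: take a flat torus whose horizontal foliation is minimal, cut it along two disjoint horizontal slits of equal length, and reglue crosswise (top edge of the first slit to bottom edge of the second, and vice versa). The result is a compact genus-two surface carrying a holomorphic quadratic differential with two cone points and exactly two horizontal saddle trajectories, namely the images of the slits. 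Every horizontal leaf missing the slits is untouched by the surgery and is still dense, now in the whole new surface; for such a divergent \( \gamma\) one has \( \overline\gamma = X\), so both saddle trajectories lie in the topological interior of \( \overline\gamma\). Hence no argument can establish ``every trajectory in the interior of \( \overline\gamma\) is divergent'' as stated; the statement that is true, that Strebel proves, and that Theorem \ref{teo:descomposicion} actually needs, concerns the \emph{spiral domain}: the component of the complement of the union of critical points and saddle trajectories containing \( \gamma\). Every trajectory in \emph{that} set is divergent, and its frontier consists of saddle trajectories and finite critical points. A correct proof must therefore work with this cut-along-the-critical-graph object, not with the naive interior of the closure.

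There is a second, smaller gap in your boundary analysis: after excluding infinite critical points and generic and periodic leaves from \( \partial\overline\gamma\), you conclude that every boundary leaf converges to finite critical points in both directions. This skips the remaining alternative that a boundary leaf has a recurrent, non-convergent end. It can be excluded, but needs an argument: \( \overline\gamma\) and its interior are saturated, so \( \partial\overline\gamma\) is a union of leaves and critical points with empty interior, while by your own first step (the no-wandering-interval argument) the closure of any recurrent leaf has non-empty interior --- a contradiction. That first step, establishing that \( \overline\gamma\) has non-empty interior, is the right idea and does match the argument of the cited source.
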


Finally, the most important result of this section:

\begin{teorema}\label{teo:descomposicion}
Let \( C\) be the union of every critical point, saddle trajectory and separating trajectory.
Then, every component of  \( X\setminus C\) is isomorphic to one and only one of
the next regions:
\begin{itemize}
		 \item a \emph{horizontal strip}, \ie, the image under an
			 isometry \( \psi\) of a strip 
			 \[ \{z\in\mathbb{C}|0\leq\im(z)\leq h\}\]
			 where the image of the boundary lines is a union of
			 saddle trajectories, separating trajectories, and finite critical points. Also, it can be shown that \( \psi(z\pm t)\) converges to infinite critical points when   \( t\to \infty\);
		\item a \emph{ring domain}, \ie, the image under a local isometry  \( \psi\)
			of a strip
			 \[ \{z\in\mathbb{C}|a\leq\im(z)\leq b\}\]
			 such that \( \psi\) is periodic, with real period.
			 If \( b<\infty\) then the image of the line  \( \{z\in \mathbb{C}|\im(z)=b\}\) is a union of saddle trajectories.
			 Otherwise, \( \psi(z+it)\) converges to a double pole  \( 2\) when \( t\to\infty\). The same statement is true in for  \( a\).
		\item a \emph{half-plane}, \ie, a horizontal strip where we allow
			\({ h=\infty}\). In this case, every trajectory converges to a pole of order grater than  \( 2\);
		\item a \emph{spiral domain}, \ie, the interior of the closure of 
			a divergent trajectory. The boundary of a spiral domain is a union of saddle trajectories.
\end{itemize}
\end{teorema}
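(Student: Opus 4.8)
The plan is to first reduce the classification to a statement about the three surviving trajectory types, and then to build an explicit isometric model for each type by extending the local normal charts as far as possible.

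I would begin by noting that, since $C$ contains every critical point together with every saddle and every separating trajectory, each horizontal trajectory meeting $X\setminus C$ belongs to exactly one of the three remaining classes: generic, periodic, or divergent. The neighborhood classification proposition stated just before the theorem shows that each of these classes is \emph{open} in $X\setminus C$: a generic trajectory is surrounded by a one-parameter family of generic trajectories, a periodic one by a family of periodic trajectories all of the same length, and a divergent one is an interior leaf of a domain all of whose leaves are divergent. As $X\setminus C$ is the disjoint union of these three open sets, any connected component $\Omega$ is contained in a single one of them, so every leaf of the foliation restricted to $\Omega$ has the same type. This homogeneity is what makes the subsequent case analysis possible.

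Next I would treat each case by fixing a base leaf $\gamma_0\subset\Omega$ together with a vertical transversal through it, and using Corollary \ref{cor:rectangulo-cubriente} to produce flat isometric embeddings of rectangles glued along the foliation. Extending the transversal, that is, the vertical coordinate, maximally yields a local isometry $\psi$ from a strip $\{a<\im(z)<b\}$ into $\Omega$, the extension terminating exactly when the image meets $C$ or escapes toward an infinite critical point. In the generic case the leaves never close, so $\psi$ is injective and the horizontal ends $\psi(z\pm t)$ run off to infinite critical points; if the height $b-a$ is finite the two boundary lines map into $C$ and $\Omega$ is a horizontal strip, whereas if the height is infinite the leaves fan into a pole of order greater than $2$, by the local model of Theorem \ref{teo:forma-criticos-infinitos}, and $\Omega$ is a half-plane. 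In the periodic case the same construction gives a $\psi$ periodic in the real direction with the common period, so $\Omega$ is a ring domain, its finite boundary mapping to saddle trajectories and its infinite ends, if present, to double poles in accordance with the circle picture of Theorem \ref{teo:forma-polos-dobles}. In the divergent case Proposition \ref{prop:divergente-recurrente} makes every leaf recurrent, forcing $\Omega$ to be the interior of the closure of a single divergent trajectory, a spiral domain whose boundary is a union of saddle trajectories. In every case surjectivity of $\psi$ onto $\Omega$ follows because its image is open, its boundary lies in $C$, and $\Omega$ is connected; mutual exclusivity is then clear from foliation invariants, since strips and half-planes are simply connected with non-recurrent leaves, ring domains are annuli with closed leaves, and spiral domains have recurrent leaves.

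The hardest part, I expect, is the generic case: making the maximal-extension argument precise, proving that the resulting $\psi$ is globally injective, so that no hidden identification occurs in the vertical direction, and cleanly separating the finite-height alternative (horizontal strip) from the infinite-height one (half-plane) by matching the asymptotics of the leaves against the local normal forms at infinite critical points of order $2$ versus order greater than $2$. The divergent case is conceptually settled by recurrence, but verifying that the boundary of the spiral domain is exactly a union of saddle trajectories, and that the component coincides with the interior of one trajectory's closure, will also require care.
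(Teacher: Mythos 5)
You should know at the outset that the paper never proves Theorem \ref{teo:descomposicion}: by the convention announced at the start of Section \ref{cap:geometria-diferenciales}, it is quoted from Strebel's book, so there is no in-paper proof to compare yours against and I can only judge your argument on its own merits. Its first half is sound, and is indeed the standard opening move: the three surviving leaf types are open in \( X\setminus C\) by the proposition immediately preceding the theorem, so each connected component is homogeneous; moreover, since components of an open set in a locally connected space are open with frontier contained in the complement, \( \partial\Omega\subseteq C\) comes for free, which is what your surjectivity remark needs. The periodic and divergent cases then do reduce, essentially as you say, to the quoted propositions.

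The genuine gap is the generic case, exactly the step you flag as hardest and then dispose of in one clause. The inference \enquote{the leaves never close, so \( \psi\) is injective} is a non sequitur, and it cannot be repaired without invoking a hypothesis that neither your proof nor the theorem as stated uses: the existence of a finite critical point. Concretely, take \( \varphi=b^2z^{-2}dz^2\) on \( \mathbb{CP}^1\) with \( b\) neither real nor purely imaginary. By Theorem \ref{teo:forma-polos-dobles} every trajectory is a logarithmic spiral converging to the double pole \( 0\) in one direction and to the double pole \( \infty\) in the other, so every trajectory is generic and non-closed; there are no zeros, hence \( C=\{0,\infty\}\) and \( X\setminus C=\mathbb{C}^*\) is a single component all of whose leaves are generic. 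Yet this component is a flat cylinder foliated by spirals: a sufficiently long vertical transversal re-crosses every leaf infinitely often, the maximal flow map \( \psi\) is an infinite-degree covering rather than an injection, and \( \mathbb{C}^*\) is neither a horizontal strip nor a half-plane. So non-closedness of leaves can never yield injectivity, because it cannot distinguish the situation you want from this one. The real proof (Strebel's) must exclude the return of the transversal to the same generic leaf by a holonomy argument along the enclosed leaf arc, and that argument ultimately uses finite critical points; this is also why the correct statement of the theorem excludes degenerate differentials such as the example above (an omission in the paper's statement that you could legitimately have flagged instead of inheriting). Until that step is supplied, your case analysis leaves the strip and half-plane cases, the ones the rest of the paper actually relies on, unproved.
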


\subsection{GMN differentials and the WKB (or Stokes) triangulation}

Despite the fact that the theorem \ref{teo:descomposicion} gives a very complete
description of the global behaviour of horizontal trajectories,
in certain cases this is far from being satisfactory.
This is the case, for example, when there is at least one spiral domain.

In general, there are two possible approaches in the study of quadratic differentials:
Confining attention to the case of quadratic differentials of finite area,
in which case the techniques of ergodic theory become more relevant.
Or focusing on the case of differentials of infinite area (\ie when there is at least an infinite critical point). Under specific circumstances, this case is more
amenable to an algebraic or combinatoric treatment.

In this work we shall take the second approach. More specifically, we will
follow the definitions of the paper \cite{bridgeland2015}.

\begin{definicion}
	A \emph{GMN} differential is a meromorphic quadratic differential such that:
	\begin{itemize}
			 \item every zero is simple,
			\item it has at least one pole,
			\item it has at least one finite critical point.
	\end{itemize}

	Moreover, if a GMN differential has no infinite critical points, we shall say that it has \emph{finite area}, or that it is \emph{finite}.
On the other hand, if it only has infinite critical points, we shall say that
it is \emph{complete}, because the complement of  the critical points is metrically complete.
\end{definicion}

The first occurrence  of this definition is in the paper \cite{neitzke-wall-crossing}.

However, in the majority of this work, we shall confine ourselves to the case
of GMN differentials with double poles only.
If the differentials also don't have saddle trajectories, we will be able to
define the  \emph{WKB triangulation}.%
\footnote{Also, see the book \cite{kawai} where the definition of the WKB triangulation for differentials on the Riemann sphere is given.}

Let us recall the notion of \emph{ideal triangulation},
which is a generalization of the usual notion of triangulation:

\begin{definicion}
	A \emph{punctured surface} is a compact differentiable surface with a finite set of marked points, which we call \emph{punctures}.
An \emph{ideal triangulation} on a punctured surface is a maximal collection of arcs
connecting punctures (possibly the same) and such that 
no two of them are homotopic relative to their boundary, and neither intersect one another. 
Two triangulations are equivalent if their arcs are isotopic relative to the boundary.
\end{definicion}

\begin{teorema}\label{teo:triangulacion-wkb} 
	Let \( \varphi\) be a GMN differential on \( X\) with only double poles and
	no saddle trajectories.
	Then, taking as punctures the set of poles and as arcs, one generic trajectory of every horizontal strip, we get an ideal triangulation of  \( X\)
	which we call the \emph{WKB triangulation}.
\end{teorema}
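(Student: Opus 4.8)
The plan is to reduce the statement to the decomposition theorem \ref{teo:descomposicion} and then read off a cell structure on \(X\). Writing \(C\) for the union of the critical points, saddle trajectories and separating trajectories, the first task is to identify which of the four model regions can actually occur under the hypotheses. Since every pole of \(\varphi\) is double, no half-plane can appear, because by \ref{teo:descomposicion} a half-plane forces a pole of order strictly greater than two. Since \(\varphi\) has no saddle trajectories, no spiral domain can appear either, as the boundary of a spiral domain is a nonempty union of saddle trajectories. The only delicate point is to exclude ring domains, and I would treat this first.

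A ring domain whose boundary circle lies at finite height has that circle composed of saddle trajectories, which are forbidden; hence any ring domain \(R\) must have both ends converging to double poles. By \ref{teo:forma-polos-dobles} the trajectories near such a pole are concentric circles, so capping each end of the open cylinder \(R\cong S^1\times\mathbb R\) with the corresponding pole produces a closed surface \(\overline R\) homeomorphic to a sphere. As a compact embedded surface without boundary inside the connected surface \(X\), \(\overline R\) is simultaneously open (invariance of domain) and closed, hence all of \(X\). But then \(X\) would be a sphere carrying a differential with exactly two double poles and no zeros, contradicting the GMN assumption that \(\varphi\) has a finite critical point. Therefore no ring domains occur, and every component of \(X\setminus C\) is a horizontal strip; note that \(C\) now consists only of the zeros, the double poles and the separating trajectories.

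With the decomposition in hand I would define the arcs. In a horizontal strip \(\{0\le \im z\le h\}\) every interior trajectory is generic and, all poles being double, converges at both ends to double poles, \ie to punctures; these interior trajectories are mutually parallel, hence isotopic rel endpoints, so the chosen arc is well defined up to isotopy and joins two (possibly equal) punctures. Distinct strips are disjoint, so the arcs are pairwise non-crossing. The heart of the argument is the local picture at a zero. Each zero is simple, so by \ref{teo:forma-criticos-finitos} exactly three separating trajectories (prongs) issue from it and divide a punctured neighbourhood into three sectors, each contained in a single strip; moreover each prong runs from the zero to a double pole, since a prong terminating at a second zero would be a saddle trajectory. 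Using the half-plane normal form of \ref{teo:forma-criticos-finitos} for each sector, the generic trajectory of the corresponding strip runs alongside the two bounding prongs and therefore has its two ends at the two poles that terminate those prongs. Labelling the three prong-poles \(P_1,P_2,P_3\), the three chosen arcs join \(P_1P_2\), \(P_2P_3\) and \(P_3P_1\), so together they bound a disk containing the zero: an ideal triangle with vertices \(P_1,P_2,P_3\).

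Finally I would globalize. Because each separating trajectory joins exactly one zero to exactly one pole, each boundary edge of a strip passes through exactly one zero, so each arc is the common edge of precisely the two triangles attached to the two zeros lying on the two sides of its strip. Running over all zeros, the triangles therefore tile \(X\) and meet only along the arcs, which exhibits the family of arcs as a decomposition of \(X\) into ideal triangles; such a decomposition is exactly a maximal, pairwise non-crossing, pairwise non-homotopic family of arcs, \ie an ideal triangulation in the sense of the definition. I expect the main obstacle to be this local-to-global step: making the normal form of \ref{teo:forma-criticos-finitos} precise enough to pin down the endpoints of each arc as the poles ending the adjacent prongs, verifying that the three arcs around a zero bound an embedded disk meeting no other critical point, and carrying out the global bookkeeping that each arc borders exactly two triangles so that the triangles genuinely cover \(X\).
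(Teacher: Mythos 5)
Your decomposition step is fine, and in fact more careful than the paper's (the paper simply asserts that Theorem \ref{teo:descomposicion} plus the hypotheses leave only horizontal strips; your explicit exclusion of ring domains via the capped-sphere argument is a correct elaboration). The genuine gap is in the heart of your argument, the triangle at each zero: you implicitly assume that the three sectors at a simple zero lie in three \emph{pairwise distinct} horizontal strips, so that three distinct arcs join the three prong-poles \(P_1,P_2,P_3\) and bound an embedded triangle, and globally that each arc separates two distinct zeros. This fails exactly in the configuration the paper itself exhibits in Figure \ref{fig:franja-doblada}: a strip whose two boundary components contain the \emph{same} zero (a strip ``with only one finite critical point in the boundary'', whose saddle connection is a loop). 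For such a zero, two of the three sectors belong to one and the same strip \(F\); one of the three prongs then runs to a double pole whose entire punctured neighbourhood is engulfed by \(F\), only \emph{two} distinct arcs appear near the zero, and the face you obtain is a \emph{self-folded} triangle (sides \(\mathrm{arc}_G,\mathrm{arc}_F,\mathrm{arc}_F\), with the engulfed pole as the folded vertex), not a triangle with vertices \(P_1,P_2,P_3\). Your global bookkeeping breaks at the same point: the arc of such a strip is not ``the common edge of the two triangles attached to the two zeros on the two sides of its strip'', since both sides of its strip carry the same zero. Any proof along your lines must treat this degenerate case separately.

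A second, smaller gap: your final sentence takes for granted that a decomposition of \(X\) into (possibly self-folded) ideal triangles is the same thing as a maximal, pairwise non-crossing, pairwise non-homotopic arc system, which is what the paper's definition of ideal triangulation actually requires; this is a standard but non-trivial lemma (a no-bigon/Euler-characteristic argument), and it is precisely the part the paper organizes differently. The paper never builds the triangles: it proves non-homotopy directly by an infinite-descent argument (two homotopic arcs from different strips would trap a zero, two of whose prongs reach the same pole; the region they cut off is either a half-plane, impossible when all poles are double, or contains a further zero, and there are only finitely many zeros), and it explicitly \emph{defers} maximality to the later count of horizontal strips against the rank of the hat homology group. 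So your route --- constructing the triangulation directly, essentially Lemma 10.1 of Bridgeland--Smith, which the paper only cites in a footnote --- is viable and would yield more (the face structure, not just the arc system), but as written it is incomplete on the self-folded case and on the tiling-implies-maximality step.
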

\begin{proof}
	First we note that by the definition of a GMN differential
	there is at least one simple zero and a pole, and this in turn implies that
	there is at least one horizontal strip.
	
	Also, the conditions we have imposed and theorem
	 \ref{teo:descomposicion} allows us to conclude that the only
	 possible regions that occur in the decomposition of the surface are horizontal strips.

	It is clear that the arcs, that in this case are generic trajectories,
	are disjoint and connect punctures.
	We need to check that there are no homotopic arcs and that the collection
	of arcs is maximal.

	If two generic trajectories that don't belong to the same
	horizontal strip are homotopic, then they are the boundary of a
	contractible set that doesn't have any pole of the differential.
	Since they don't belong to the same horizontal strip,
	there is at least one simple zero in the contractible set that they bound.
	Every one of the three rays coming from the zero should converge to one
	of the limit points of the generic trajectories, so at least two of the rays converge to the same pole.

	Call \( R_1\) the region bounded by those two rays, the pole and the zero.
	The set 
	\( R_1\) is also a contractible set without poles in the interior.
	If \( R_1\) also doesn't have zeros on the interior, it should be isomorphic
	to a half-plane, but we known that the limit point in any half-plane is a pole of order grater than  \( 2\).

	On the other hand, if there is a zero on  \( R_1\) two of the rays
	coming from the zero converge to the pole, and consequently they bound a new
	region  \( R_2\) with the same properties as  \( R_1\).

	Thus, because there are no half-planes, we could continue this process indefinitely, but there are only a finite amount of zeros so we arrive at a contradiction.

	This argument proves that the arcs are not homotopic.
	We shall prove that the collection of arcs is maximal in the next section
	after we develop the necessary tools
	to compute the number of horizontal strips of a given differential.
\footnote{A slightly different and more general proof can be found in the lemma  \( 10.1\) of \cite{bridgeland2015}.}
\end{proof}

\begin{figure}
\centering
\includegraphics{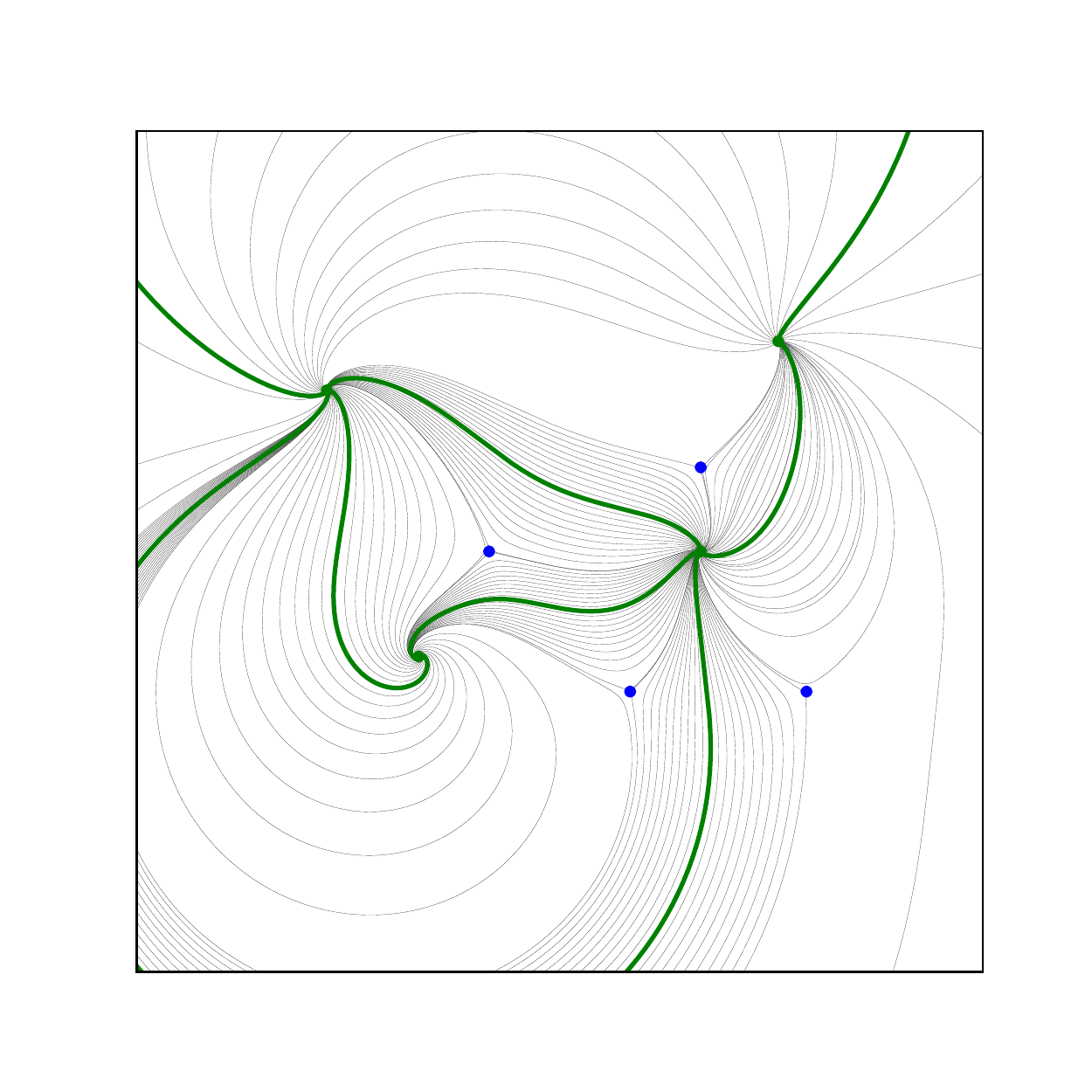}
\caption{Horizontal trajectories of a quadratic differential on the complex plane,
	with four double poles and four simple zeros.
	The green lines are arcs of the WKB triangulation.
}\label{fig:triangulacion}
\end{figure}

\section{Spectral covers, homology and periods}%
\label{cap:espectral-hom}
The goal of this section is to give a homological description of 
a quadratic differential.
This is analogous to the description of Abelian differentials by their periods \ie the integral of the Abelian differential along a basis for the homology of the surface.

The first step is a construction needed to be able to define
the ``square root'' of a quadratic differential.
As usual this is done by taking a two fold branched covering
of the surface.
This covering is called the \emph{spectral cover}.
The details of its construction will be given in section \ref{sec:cubierta-espectral}.

In a natural way, the ``square root'' of the quadratic differential
is a meromorphic \( 1-\)form on the spectral cover, and the periods
can be defined by integrating this form over the elements of a
basis of the first homology group of the covering.%
\footnote{More precisely, over the \emph{hat homology group} which is a subgroup of the first homology group of the cover.}
Thus, it is important to compute the genus and the first Betti number of
the spectral cover. Necessary tools for this will be given in the first section while a detailed study of the homology of the spectral cover will be carried in the last section.

Most of the results and definitions of this section are taken from \cite{bridgeland2015}, except those of the first section which are classical results in the theory of Riemann surfaces and line bundles.
Two possible references for this last topic are 
\cite{donaldson2011riemann} and
\cite{narasimhan1992compact}.

\subsection{Existence of quadratic differentials}

In the previous section we developed a partially satisfactory
understanding of the geometry of a quadratic differential.
We employed a local classification of the geometry of the differential
together with some global arguments, which eventually led us to
Theorem \ref{teo:descomposicion}.

In order to deepen those results, we shall need a language
to be able to speak of families of quadratic differentials.
In particular, we will also establish the existence of
quadratic differentials on any Riemann surface.

\subsection{Digression: divisors, Riemann-Roch Theorem and the Picard group}

One way to systematically handle zeros and poles of a holomorphic section of a line bundle is to use \emph{divisors}:

\begin{definicion}
	A \emph{divisor} \( D\) on \( X\) is a finite formal sum of points of \(X\).
	We usually write \[ D=\sum n_p p\]
	where \( n_p\) is an integer and only a finite amount of this numbers is different from zero.
	The \emph{degree} of a divisor \( D=\sum n_p p\) is the number \( \grad(D)= \sum n_p \in \mathbb{Z}\).
\end{definicion} 

The divisors on a given Riemann surface form an Abelian group under
pointwise addition.
We will write \( \Divisor(X)\) to denote this group. The degree is a
group homomorphism 
	\[ \grad: \Divisor(X)\rightarrow\mathbb{Z}.\]

	If \(\sigma\) is a meromorphic section of a line bundle  \( L\) with  zeroes  \( p_1,\ldots,p_n\)
and poles \( q_1,\ldots,q_m\), then the divisor:
\[ d(\sigma):= \sum_{i=1}^{n} \ord_{p_i}(\sigma)p_i + \sum_{i=1}^{m} \ord_{q_i}(\sigma)q_i \]
where \( \ord\) is the order of  \( \sigma\) at a zero or a pole, is called the \emph{divisor of} \( \sigma\).
We will simply call \( \grad(d(\sigma))\) the degree of \( \sigma\).

It is possible to define a partial ordering on the divisor group
by declaring
\( D=\sum n_p p\geq D^\prime=\sum m_p p\) if and only if \( n_p\geq m_p\) for every \( p\in X\).

For example, if follows that \( d(f)\geq p-q\) is the same as saying that  \( f\) has at least a simple zero at  \( p\) and at most a simple pole at  \( q\).

We shall write \( \mathcal{O}_X\) for the sheaf of holomorphic
functions on a Riemann surface \( X\).
If we are working on a fixed surface we will just write  \( \mathcal{O}\).

If \( D\) is a divisor and \(L \) is a line bundle, we can define a
sheaf of 
\(\mathcal{O} \)-modules by the rule:
\[ \mathcal{L}_D (\mathcal{U}):=\left\{\,\sigma~ \middle| \, \sigma:\mathcal{U}\rightarrow L \, \textrm{is a meromorphic section and } d(\sigma)\geq D \vphantom{\frac12}\right\}.\]
When \( D=0\) we will write \( \mathcal{L}\) instead of \( \mathcal{L}_D\).
Also, by \( H^i(L_D)\) we shall mean the sheaf cohomology groups of \( \mathcal{L}_D\).

It is well known that \( H^0(L)\) is canonically isomorphic to the
vector space of global sections of  \( L\)
and that the higher cohomology groups can be defined more or less explicitly with \emph{\v{C}ech cohomology} or more abstractly 
as \emph{derived functors} of the global sections functor.

One of the most important results about sheaf cohomology groups of line bundles is the so called \emph{finiteness theorem};

\begin{teorema}[finiteness]\label{teo:finitud} 
	Let \( L\) be a line bundle on  \( X\).
	The cohomolgy groups of \( L\) satisfy:
	\[ H^i(L)=0 \quad\textrm{ if }\quad i>1\]
	\[\dim_\mathbb{C}H^0(L)<\infty \text{~~ and~~ }\dim_\mathbb{C}H^1(L)<\infty.\]
\end{teorema}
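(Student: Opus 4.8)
The plan is to transport both assertions out of the formalism of sheaf cohomology and into analysis by means of the Dolbeault resolution of \( \mathcal{L}\); in that picture the vanishing becomes a formal consequence of the fact that \( \dim_{\mathbb{C}}X=1\), and the finite-dimensionality reduces to the standard theory of elliptic operators on a compact manifold.

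First I would realize \( H^i(L)\) as Dolbeault cohomology. Writing \( \mathcal{A}^{0,q}(L)\) for the sheaf of smooth \( L\)-valued \( (0,q)\)-forms, the \( \bar\partial\)-Poincar\'e lemma (a local statement, hence no different with coefficients in a line bundle than in the scalar case) shows that
\[ 0\longrightarrow \mathcal{L}\longrightarrow \mathcal{A}^{0,0}(L)\xrightarrow{\ \bar\partial\ }\mathcal{A}^{0,1}(L)\longrightarrow 0 \]
is an exact sequence of sheaves. Each \( \mathcal{A}^{0,q}(L)\) is a module over the sheaf of smooth functions, so it admits partitions of unity and is therefore a fine, hence acyclic, sheaf. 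Consequently this is an acyclic resolution of \( \mathcal{L}\), and \( H^i(L)\) is computed by the complex of global sections \( \mathcal{A}^{0,0}(X,L)\xrightarrow{\bar\partial}\mathcal{A}^{0,1}(X,L)\). The vanishing for \( i>1\) is then immediate: since \( \dim_{\mathbb{C}}X=1\) there are no nonzero smooth \( (0,q)\)-forms once \( q\geq 2\), so this complex lives in degrees \( 0\) and \( 1\) only. I would emphasize that this is exactly where holomorphy enters — the purely topological bound coming from the covering dimension of the underlying real surface gives only \( H^i(L)=0\) for \( i>2\), and it is the \( \bar\partial\)-Poincar\'e lemma that sharpens this to \( i>1\).

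For the finite-dimensionality I would fix a Hermitian metric on \( X\) together with a Hermitian metric on \( L\); these equip each space of \( (0,q)\)-forms with an \( L^2\) inner product, a formal adjoint \( \bar\partial^{*}\), and the Dolbeault Laplacian \( \Delta=\bar\partial\,\bar\partial^{*}+\bar\partial^{*}\bar\partial\), a second-order elliptic operator on the compact manifold \( X\). Hodge theory then identifies each \( H^q(L)\) with the space of \( \Delta\)-harmonic \( L\)-valued \( (0,q)\)-forms, and the general theory of elliptic operators on compact manifolds — G\r{a}rding's inequality, elliptic regularity, and the Rellich compactness theorem — forces this kernel to be finite-dimensional. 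This yields \( \dim_{\mathbb{C}}H^0(L)<\infty\) and \( \dim_{\mathbb{C}}H^1(L)<\infty\).

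The main obstacle is precisely this last step: the reduction to Dolbeault cohomology and the vanishing are formal, whereas finiteness genuinely rests on an analytic compactness statement. Should one prefer to remain inside the \v{C}ech framework set up earlier, the same difficulty reappears in a different disguise: one computes \( H^1(L)\) using two finite covers by coordinate disks, one a relatively compact refinement of the other, and the finiteness follows from L.\ Schwartz's lemma that a continuous surjection of Fr\'echet spaces perturbed by a compact operator has finite-dimensional cokernel, the compactness being supplied by Montel's theorem applied to the restriction maps between the disks. Either route ultimately turns on a compactness theorem from analysis rather than on a formal manipulation.
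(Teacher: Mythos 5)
Your proposal is correct, but there is nothing in the paper to compare it against: the paper states this finiteness theorem \emph{without proof}, treating it as a classical fact and deferring to its references \cite{donaldson2011riemann, narasimhan1992compact} for the argument. What you have written is precisely the standard proof found in such references: the Dolbeault resolution by fine (hence acyclic) sheaves identifies \( H^i(L)\) with the cohomology of the two-term complex \( \mathcal{A}^{0,0}(X,L)\xrightarrow{\bar\partial}\mathcal{A}^{0,1}(X,L)\), which kills everything above degree \( 1\) on a complex curve, and Hodge theory together with elliptic estimates (or, in the \v{C}ech picture, Montel's theorem plus L.~Schwartz's perturbation lemma for Fr\'echet spaces) gives the finiteness. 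Two small points are worth making explicit if you write this up in full. First, the paper introduces \( H^i(L)\) via \v{C}ech cohomology or derived functors, so you should record the (standard) comparison identifying these with the groups computed from your acyclic resolution; this holds because \( X\) is paracompact, and in fact compact here. Second, the finiteness of \( H^0(L)\) does not need the Laplacian at all: holomorphic sections bounded in sup-norm form a normal family by Montel's theorem, which directly forces \( \dim H^0(L)<\infty\), so the genuinely hard analytic content is concentrated in \( H^1\). Neither point is a gap; your outline is sound, and you have correctly identified that the theorem ultimately rests on a compactness statement from analysis (Rellich or Montel) rather than on formal sheaf-theoretic manipulation.
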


With this theorem we can easily proof the existence of meromorphic sections for any line bundle:
\begin{corolario}
	Let \( L\) be a line bundle on  \( X\) and \( p\in X\).
	There is a meromorphic section \( \sigma\) of \( L\) that is holomorphic on  \( X\setminus \{p\}\).
	
\end{corolario}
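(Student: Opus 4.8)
The plan is to realize the desired section as a nonzero global section of the twisted sheaf $\mathcal{L}_{-kp}$ for $k$ large. By the very definition of $\mathcal{L}_D$ with $D=-kp$, a section $\sigma \in H^0(L_{-kp})$ is a meromorphic section of $L$ with $d(\sigma)\geq -kp$, i.e. one that has at worst a pole of order $k$ at $p$ and is holomorphic on $X\setminus\{p\}$. Hence it suffices to exhibit some $k$ with $\dim_\mathbb{C} H^0(L_{-kp})>0$; in fact I will show this dimension grows without bound, so that such sections exist in abundance.

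First I would compare consecutive twists. Since $-kp \geq -(k+1)p$, every section admissible for $\mathcal{L}_{-kp}$ is admissible for $\mathcal{L}_{-(k+1)p}$, giving an inclusion of sheaves, and the two sheaves agree away from $p$. Working in a local trivialization and coordinate $z$ centered at $p$, the quotient records exactly the Laurent coefficient of $z^{-(k+1)}$, so I obtain a short exact sequence of $\mathcal{O}$-modules
\[ 0 \to \mathcal{L}_{-kp} \to \mathcal{L}_{-(k+1)p} \to \mathbb{C}_p \to 0, \]
where $\mathbb{C}_p$ is the skyscraper sheaf supported at $p$ with one-dimensional stalk. A skyscraper sheaf is flasque, so $H^0(\mathbb{C}_p)=\mathbb{C}$ and $H^i(\mathbb{C}_p)=0$ for $i\geq 1$, whence its Euler characteristic is $1$.

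Next I would pass to the long exact cohomology sequence. Here the finiteness theorem \ref{teo:finitud} does the real work: it guarantees that all the groups $H^0$ and $H^1$ are finite-dimensional and that $H^2$ vanishes, so the Euler characteristic $\chi(L_D):=\dim_\mathbb{C} H^0(L_D)-\dim_\mathbb{C} H^1(L_D)$ is well defined and additive along the sequence above. This yields $\chi(L_{-(k+1)p})=\chi(L_{-kp})+1$, hence by induction $\chi(L_{-kp})=\chi(L)+k$. Because $\dim_\mathbb{C} H^1(L_{-kp})\geq 0$, I conclude
\[ \dim_\mathbb{C} H^0(L_{-kp}) = \chi(L_{-kp}) + \dim_\mathbb{C} H^1(L_{-kp}) \geq \chi(L)+k, \]
which tends to infinity with $k$. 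Taking $k$ large enough that the right-hand side is positive produces a nonzero $\sigma\in H^0(L_{-kp})$, which by construction is a meromorphic section of $L$ holomorphic on $X\setminus\{p\}$, as required.

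The step deserving the most care is the identification of the quotient in the short exact sequence as a skyscraper sheaf of Euler characteristic $1$, together with the additivity of $\chi$; this is precisely the point where finiteness of the cohomology and the vanishing of $H^2$ are indispensable, since without them $\chi$ carries no information. Once that is in place, the conclusion is purely formal, and one does not even need to track $H^1$ beyond its nonnegativity.
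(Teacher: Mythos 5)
Your proof is correct, and it is exactly the argument the paper has in mind: the corollary is stated immediately after the finiteness theorem with the remark that existence of meromorphic sections follows ``easily'' from it, and the paper omits the details entirely. Your chain of twists $\mathcal{L}_{-kp}\subset\mathcal{L}_{-(k+1)p}$ with skyscraper quotient, plus additivity of the Euler characteristic (which correctly avoids invoking Riemann--Roch, whose statement in the paper already presupposes this corollary via the definition of degree), is the standard way to fill that gap.
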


Let \( L\) be a line bundle and  \( d(\sigma)\) be the divisor of any section \( \sigma\) of \(L\) .
By the previous corollary, there is at least one such section.
On the other hand, if  \( \sigma\) and  \( \tilde\sigma\) are two meromorphic sections of 
\( L\) the quotient \( \sigma/\tilde\sigma\) is a well defined meromorphic function on \( X\) so  \( d(\sigma)=d(\tilde\sigma)+d(\sigma/\tilde\sigma)\).

Divisors of meromorphic functions are called \emph{principal divisors}
and form an additive subgroup of \( \Divisor(X)\).
Let us write  \( \PDivisor(x)\) for the subgroup of all principal divisors.
The quotient \(\Divisor(X)/\PDivisor(X)\) is called the \emph{divisor class group} of \( X\).

With this in mind we can prove the next theorem:

\begin{teorema}\label{teo:picard-divisores} 
	The function that maps a line bundle \( L\) on \( X\)
	to the divisor class of any meromorphic section of \( L\)
	is a group isomorphism:
	\begin{align*}
		\pic(X)& \xrightarrow{\Divisor} \Divisor(X)/\PDivisor(X)\\
	L &\mapsto [d(\sigma)]
\end{align*}
\end{teorema}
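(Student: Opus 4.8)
The plan is to verify in turn that the stated map is well defined, is a group homomorphism, is injective, and is surjective. The first three properties are formal consequences of the single fact that the quotient of two meromorphic sections of the same bundle is a globally defined meromorphic function; only surjectivity requires genuine construction, so that is where the real work lies.

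First I would check well-definedness. Existence of at least one meromorphic section of $L$ is guaranteed by the preceding corollary, so $[d(\sigma)]$ is at least defined for some $\sigma$. If $\sigma,\tilde\sigma$ are two meromorphic sections of $L$, then $\sigma/\tilde\sigma$ is a global meromorphic function and $d(\sigma)-d(\tilde\sigma)=d(\sigma/\tilde\sigma)\in\PDivisor(X)$, as already noted in the text, so the class $[d(\sigma)]$ is independent of the chosen section. An isomorphism $L\cong L'$ carries a section $\sigma$ of $L$ to a section of $L'$ with the same zeros and poles, so the class depends only on the isomorphism class of $L$. For the homomorphism property I would use that if $\sigma$ is a section of $L$ and $\tau$ a section of $L'$, then $\sigma\otimes\tau$ is a section of $L\otimes L'$ whose local expression is the product $\sigma_U\tau_U$; since $\ord_p(fg)=\ord_p f+\ord_p g$, this yields $d(\sigma\otimes\tau)=d(\sigma)+d(\tau)$, so $\otimes$ is sent to $+$. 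The trivial bundle $\mathbb{C}_X$ carries the nowhere-vanishing section $1$ with $d(1)=0$, so the identity maps to the identity class.

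Injectivity should then be almost immediate. If $L$ maps to the zero class, then for some section $\sigma$ we have $d(\sigma)=d(f)$ with $f$ meromorphic, and $f^{-1}\sigma$ is a meromorphic section of $L$ with divisor $0$, hence a nowhere-vanishing holomorphic section. Such a section trivializes $L$, since $(x,c)\mapsto c\,(f^{-1}\sigma)(x)$ defines an isomorphism $\mathbb{C}_X\to L$; therefore $L$ is trivial.

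The main obstacle is surjectivity, the only constructive step. Given $D=\sum n_p p$ I would build the bundle $\mathcal{O}(D)$ by hand: cover $X$ by opens $U_i$, on each choose a meromorphic function $f_i$ with $d(f_i)=D|_{U_i}$ (for instance $z^{n_p}$ in a coordinate centred at a single point $p\in U_i$, and $1$ where $D$ has no support), and set $\phi_{ij}=f_i/f_j$. The delicate point is verifying that each $\phi_{ij}$ is a nowhere-vanishing holomorphic function and that the cocycle condition holds; this is exactly where one uses that $f_i$ and $f_j$ have the same divisor on $U_i\cap U_j$, so their quotient is holomorphic and nonvanishing there. These transition functions then define a line bundle $L$, and the collection $\{f_i\}$ satisfies $f_i=f_j\phi_{ij}$, hence is a global meromorphic section $\sigma$ of $L$ with $d(\sigma)=D$. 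Thus $[D]$ lies in the image, and combined with the previous steps this shows the map is a bijective homomorphism, that is, an isomorphism.
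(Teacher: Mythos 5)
Your proof is correct. Note, however, that the paper itself never proves this theorem: it is treated as a classical result, with the section's introduction deferring to the cited references on Riemann surfaces and line bundles, and the only groundwork laid in the text is precisely your first step (well-definedness, via the observation that \( \sigma/\tilde\sigma \) is a global meromorphic function, so \( d(\sigma)=d(\tilde\sigma)+d(\sigma/\tilde\sigma) \)). Your argument supplies the standard textbook proof of the remaining parts, and all the steps are sound: the homomorphism property follows from \( (\sigma\otimes\tau)_U=\sigma_U\tau_U \) as recorded earlier in the paper; injectivity correctly reduces to the fact that a section of divisor zero is holomorphic and nowhere vanishing, hence trivializing; and the surjectivity construction of the bundle \( \mathcal{O}(D) \) via transition functions \( \phi_{ij}=f_i/f_j \) works, with the cocycle condition holding trivially and the quotients being holomorphic and nonvanishing on overlaps exactly because \( f_i \) and \( f_j \) cut out the same divisor there. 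The one implicit choice worth making explicit is that the cover should be taken fine enough that each \( U_i \) meets at most one point of the support of \( D \) (possible since the support is finite), so that the local functions \( z^{n_p} \) and \( 1 \) can be prescribed as you indicate; also note that your convention \( f_i=f_j\phi_{ij} \) matches the paper's convention \( \sigma_U=\sigma_V\phi_{U,V} \) for sections, so the collection \( \{f_i\} \) genuinely defines a meromorphic section with divisor \( D \).
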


Recall that for any meromorphic functions, the number of
zeros and poles is the same (counting multiplicity).

This statement can be rephrased by saying that the degree of any principal divisor is zero.
Thus, the degree can be unambiguously defined for any divisor class in
the divisor class group.

So, in particular, a consequence of the previous theorem is that
any line bundle  \( L\) has a well defined \emph{degree} 
\( \grad(L)\) that it can be computed as the degree of \emph{any}
meromorphic section of \(L\).

Finally we can state two of the main theorems in the theory of Riemann
surfaces: the Riemann-Roch theorem and the Serre duality theorem.

\begin{teorema}[Riemann-Roch]\label{teo:riemann-roch}

	Let \(L\) be a line bundle on \(X\). Then 

\[\dimension_\mathbb{C}\H^0(L)-\dimension_\mathbb{C}\H^1(L) = \grad(L) - g +1\]
where \( g\) is the genus of \( X\).

\end{teorema}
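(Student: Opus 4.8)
The plan is to prove the equivalent statement that the \emph{Euler characteristic} $\chi(L) := \dimension_\mathbb{C}\H^0(L) - \dimension_\mathbb{C}\H^1(L)$ — a well-defined integer by the finiteness theorem (Theorem \ref{teo:finitud}) — satisfies $\chi(L) = \grad(L) - g + 1$. The strategy is an induction that moves through the whole Picard group one point at a time: I will show that the difference $\chi(L) - \grad(L)$ is unchanged when $L$ is replaced by $L\otimes\mathcal{O}_X(p)$ for a point $p$, where $\mathcal{O}_X(p)$ denotes the line bundle associated by Theorem \ref{teo:picard-divisores} to the divisor $p$. Since $\pic(X)\cong\Divisor(X)/\PDivisor(X)$ and every divisor is a finite $\mathbb{Z}$-combination of points, any line bundle is obtained from the trivial bundle $\mathbb{C}_X$ by tensoring with finitely many factors $\mathcal{O}_X(\pm p)$; hence it suffices to check the invariance of $\chi(L)-\grad(L)$ under these moves and to evaluate the common value at $L=\mathbb{C}_X$.

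For the inductive step, fix a point $p$ and write $L(p):=L\otimes\mathcal{O}_X(p)$. A holomorphic section of $L$ is in particular a section of $L(p)$ that is permitted a simple pole at $p$ but happens to be holomorphic there, so the sheaf $\mathcal{L}$ includes into $\mathcal{L}(p)$ and the quotient is the skyscraper sheaf $\mathbb{C}_p$ supported at $p$, whose stalk records the single Laurent coefficient that $L(p)$ allows and $L$ forbids. This yields the short exact sequence of $\mathcal{O}$-modules
\[ 0 \longrightarrow \mathcal{L} \longrightarrow \mathcal{L}(p) \longrightarrow \mathbb{C}_p \longrightarrow 0. \]
Passing to the long exact cohomology sequence and using $\H^0(\mathbb{C}_p)=\mathbb{C}$, $\H^1(\mathbb{C}_p)=0$, together with the vanishing $\H^i=0$ for $i>1$ from Theorem \ref{teo:finitud}, every term is finite-dimensional and the alternating sum of dimensions vanishes, which gives $\chi(L(p))=\chi(L)+1$. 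Since $\grad(L(p))=\grad(L)+1$ as well, the quantity $\chi(L)-\grad(L)$ is invariant under $L\mapsto L(p)$, and the same computation run in reverse handles $L\mapsto L\otimes\mathcal{O}_X(-p)$.

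It then remains to evaluate the constant at $L=\mathbb{C}_X$. Here $\H^0(\mathbb{C}_X)$ is the space of global holomorphic functions on $X$, which is one-dimensional because $X$ is compact and connected, while $\dimension_\mathbb{C}\H^1(\mathbb{C}_X)=g$; hence $\chi(\mathbb{C}_X)-\grad(\mathbb{C}_X)=(1-g)-0=1-g$, and therefore $\chi(L)-\grad(L)=1-g$ for every line bundle, as claimed. I expect the main obstacle to be the honest verification of the inductive step: one must confirm that the cokernel sheaf is \emph{exactly} $\mathbb{C}_p$ (a local Laurent-coefficient computation) and, more importantly, that the Euler characteristic is additive along the long exact sequence — this additivity is precisely what Theorem \ref{teo:finitud} buys us, since it guarantees that every group in sight is finite-dimensional and that the sequence terminates. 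A secondary delicate point is the base case: depending on how $g$ is defined in this text, the identity $\dimension_\mathbb{C}\H^1(\mathbb{C}_X)=g$ is either taken as the definition of the genus or must be reconciled with the topological genus, in the latter case via Serre duality, identifying $\H^1(\mathbb{C}_X)$ with the dual of the space of holomorphic $1$-forms.
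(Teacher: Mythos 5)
The paper never proves Theorem \ref{teo:riemann-roch}: it is quoted as a classical result, with the reader sent to \cite{donaldson2011riemann} and \cite{narasimhan1992compact}, so there is no internal proof to compare yours against. Judged on its own, your argument is the standard textbook proof from exactly those sources: the skyscraper short exact sequence $0\to\mathcal{L}\to\mathcal{L}(p)\to\mathbb{C}_p\to 0$, additivity of the Euler characteristic along the long exact sequence (legitimate here because Theorem \ref{teo:finitud} gives finite-dimensionality and vanishing above degree one, and the skyscraper sheaf is acyclic), and reduction to the trivial bundle through the isomorphism $\pic(X)\cong\Divisor(X)/\PDivisor(X)$ of Theorem \ref{teo:picard-divisores}. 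All of that inductive machinery is sound.

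The one genuine gap is the base case, and your own hedge does not repair it within the logic of this paper. Here $g$ is the topological genus of $X$ (the paper later feeds it into the Riemann--Hurwitz formula and into ranks of homology groups), so the identity $\dimension_\mathbb{C}\H^1(\mathbb{C}_X)=g$ is a substantive theorem, not a definition. Your proposed reconciliation --- Serre duality plus the fact that holomorphic $1$-forms form a $g$-dimensional space --- is circular in this setting: the paper obtains $\dimension\H^0(K_X)=\dimension\H^1(\mathbb{C}_X)=g$ precisely by combining Theorem \ref{teo:dualidad-serre} with Riemann--Roch, \ie with the very statement you are proving. To close the argument honestly you must either take $g:=\dimension_\mathbb{C}\H^1(\mathbb{C}_X)$ as the definition (the arithmetic genus) and then prove separately that this number coincides with the topological genus --- which is where the hard analytic content (Hodge/Dolbeault theory, or the exponential sheaf sequence) actually lives --- or import that analytic input directly as an additional ingredient. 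Everything else in your proof is correct.
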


\begin{teorema}[Serre duality]\label{teo:dualidad-serre}
	Let \(L\) be a line bundle on \(X\). Then 
\[\H^0(L) \cong \H^1(L^*\otimes K_X)^*\]
or, equivalently:
\[\H^0(L^*\otimes K_X) \cong \H^1(L)^*\]
\end{teorema}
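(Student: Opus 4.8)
The two displayed isomorphisms are equivalent: substituting $L^*\otimes K_X$ for $L$ in the first turns it into the second, because $(L^*\otimes K_X)^*\otimes K_X\cong L$. So it suffices to produce a natural isomorphism $\H^0(L^*\otimes K_X)\cong \H^1(L)^*$. The plan is to pass from sheaf cohomology to its analytic (Dolbeault) description, write down an explicit integration pairing between the two sides, and prove that it is a perfect pairing by invoking Hodge theory on $X$.

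First I would use the Dolbeault isomorphism to identify $\H^1(L)$ with the quotient $A^{0,1}(L)/\bar\partial A^{0,0}(L)$, where $A^{p,q}(L)$ denotes the smooth $L$-valued $(p,q)$-forms on $X$ and $\bar\partial$ is the Cauchy--Riemann operator attached to the holomorphic structure of $L$. Under the same dictionary, a global section of $L^*\otimes K_X$ is precisely a holomorphic $L^*$-valued $(1,0)$-form, \ie a $\bar\partial$-closed element of $A^{1,0}(L^*)$. For such an $\omega$ and a class $[\eta]\in\H^1(L)$ represented by $\eta\in A^{0,1}(L)$, the product $\omega\wedge\eta$ takes values in $L^*\otimes L\cong\mathbb{C}_X$ and so is an ordinary smooth $(1,1)$-form; I define
\[
\langle\omega,[\eta]\rangle:=\int_X\omega\wedge\eta .
\]
This is independent of the representative: if $\eta=\bar\partial s$ with $s\in A^{0,0}(L)$, then using $\bar\partial\omega=0$ and the vanishing of $(2,0)$-forms on a curve one gets $\omega\wedge\bar\partial s=-\,d(\omega\, s)$, so the integral vanishes by Stokes' theorem. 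Hence $\omega\mapsto\langle\omega,\cdot\rangle$ is a well-defined linear map $\H^0(L^*\otimes K_X)\to\H^1(L)^*$.

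To see that this map is an isomorphism I would fix a Hermitian metric on $X$ and on $L$ and apply Hodge theory for the $\bar\partial$-Laplacian. The Hodge decomposition identifies $\H^1(L)$ with the finite-dimensional space of harmonic $L$-valued $(0,1)$-forms (finiteness being already guaranteed by Theorem \ref{teo:finitud}), and the conjugate-linear Hodge star $\overline{\ast}$ restricts to an isomorphism from this space onto the harmonic, hence holomorphic, $L^*$-valued $(1,0)$-forms, that is onto $\H^0(L^*\otimes K_X)$. Under this identification the pairing above becomes the $L^2$ Hermitian inner product on harmonic forms, since $\langle\overline{\ast}\eta,[\eta']\rangle=(\eta,\eta')_{L^2}$; as the latter is positive-definite the pairing is non-degenerate, and therefore perfect between spaces of equal finite dimension. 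This yields the desired isomorphism.

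The main obstacle is not the formal bookkeeping above but the analytic machinery it rests on: the Dolbeault isomorphism and the Hodge decomposition for $\bar\partial$ on a compact Riemann surface require elliptic regularity, the closed-range property, and the existence of harmonic representatives for the associated Laplacian. The reduction to a single statement, the construction of the pairing, and the Stokes computation are all routine once this package is available, so in practice I would import the Hodge-theoretic results from \cite{donaldson2011riemann} or \cite{narasimhan1992compact} and spend the effort on correctly matching conventions, especially the conjugate-linear star and the sign in the well-definedness check.
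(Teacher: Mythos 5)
Your sketch cannot be matched against anything in the paper, because the paper does not prove Serre duality at all: the theorem is stated as one of the classical results of the theory, the section's preamble explicitly delegates such results to \cite{donaldson2011riemann} and \cite{narasimhan1992compact}, and the only text following the statement is a historical remark (with a footnote pointing to p.~186 of Donaldson). So where the paper treats duality as a black box, you sketch the standard Dolbeault--Hodge proof, and your sketch is correct: the reduction of the two formulations to one another via the substitution $L\mapsto L^*\otimes K_X$ (using $(L^*\otimes K_X)^*\otimes K_X\cong L$), the integration pairing $\int_X\omega\wedge\eta$, the Stokes-theorem verification that it descends to Dolbeault classes (your sign $\omega\wedge\bar\partial s=-d(\omega s)$ is right, since the $(2,0)$ part of $d(\omega s)$ vanishes on a curve), and the conjugate-linear Hodge star carrying harmonic $L$-valued $(0,1)$-forms onto holomorphic sections of $L^*\otimes K_X$, with positive-definiteness of the $L^2$ product giving non-degeneracy and hence perfectness between spaces of equal finite dimension. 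The trade-off is the one you identify yourself: your argument is only as complete as the elliptic-regularity and Hodge-decomposition package you import, which is essentially the same content the paper outsources wholesale to the cited textbooks. For this paper's purposes --- duality is used only to compute $\dim\H^0(Q_X)$, $\dim\H^0(K_X)$ and $\grad(K_X)$ --- the bare citation is a defensible level of detail; your sketch is what one would add to make the statement self-contained, at the cost of setting up Dolbeault cohomology and harmonic theory, neither of which appears anywhere else in the text.
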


This last result, due to Jean Pierre Serre, relates the first cohomolgy groups to the zeroth cohomology groups.
It is the main reason why the theory originally developed without
reference to the first cohomology groups:
everything about them can be recast (albeit in a possibly obscure way) in terms of meromorphic sections
of a related line bundle.%
\footnote{For a discussion of this fact see \cite[p.~186]{donaldson2011riemann}.}

As a first application of this theorems, we shall compute the cohomology groups and degree of two of the most important line bundles:
the trivial and canonical bundles.

First we note that by the Serre duality theorem:
\[1=\dim \H^0(\mathbb{C}_X) =\dim \H^1(K_X) \]
\[\dim \H^0(K_X) =\dim \H^1(\mathbb{C}_X) \]
On the other hand, by the Riemann-Roch theorem:
\[\dimension\H^0(\mathbb{C}_X)-\dimension\H^1(\mathbb{C}_X) = \grad(\mathbb{C}_X) - g +1\]
which combined with the previous assertion and the fact that
\( \grad(\mathbb{C}_X)=0\) is the same as
\[ 1-\H^0(K_X)=-g+1\]
so we conclude that \( \H^0(K_X)=g\).

Again, by the Riemann-Roch theorem, but now for the canonical bundle, we have:
\[\dimension\H^0(K_X)-\dimension\H^1(K_X) = \grad(K_X) - g +1\]
so 
\[ g-1=\grad(K_X) -g +1\]
which leads to:
\begin{corolario} The degree of the canonical bundle of a compact Riemann surface \( X\) is \( 2g- 2\) where \( g\) is the genus of the surface:
	\[ \grad(K_X)=2g-2.\]
\end{corolario}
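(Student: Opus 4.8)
The plan is to apply the Riemann--Roch theorem (Theorem \ref{teo:riemann-roch}) directly to the canonical bundle $K_X$, which gives
\[ \dimension_\mathbb{C}\H^0(K_X) - \dimension_\mathbb{C}\H^1(K_X) = \grad(K_X) - g + 1. \]
Reading off $\grad(K_X)$ from this identity only requires the two dimensions on the left-hand side, so the substance of the argument is the computation of $\dimension_\mathbb{C}\H^0(K_X)$ and $\dimension_\mathbb{C}\H^1(K_X)$. Both will come from Serre duality (Theorem \ref{teo:dualidad-serre}) combined with a preliminary application of Riemann--Roch to the trivial bundle $\mathbb{C}_X$.

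First I would determine the cohomology of $\mathbb{C}_X$. The only non-formal ingredient is that every global holomorphic function on a compact connected Riemann surface is constant, so that $\dimension_\mathbb{C}\H^0(\mathbb{C}_X) = 1$; this is where compactness and the maximum principle enter. Since $\grad(\mathbb{C}_X) = 0$, Riemann--Roch for $\mathbb{C}_X$ reads $1 - \dimension_\mathbb{C}\H^1(\mathbb{C}_X) = -g + 1$, and hence $\dimension_\mathbb{C}\H^1(\mathbb{C}_X) = g$.

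I would then transport these facts to $K_X$ through Serre duality. Taking $L = \mathbb{C}_X$ in Theorem \ref{teo:dualidad-serre} and using $\mathbb{C}_X^* \otimes K_X \cong K_X$ produces the isomorphisms $\H^0(\mathbb{C}_X) \cong \H^1(K_X)^*$ and $\H^0(K_X) \cong \H^1(\mathbb{C}_X)^*$, whence $\dimension_\mathbb{C}\H^1(K_X) = 1$ and $\dimension_\mathbb{C}\H^0(K_X) = g$. Substituting these into the Riemann--Roch identity for $K_X$ gives $g - 1 = \grad(K_X) - g + 1$, and solving yields $\grad(K_X) = 2g - 2$. I do not expect a genuine obstacle here: granting Riemann--Roch and Serre duality, the derivation is pure bookkeeping, and the single non-tautological step is the constancy of global holomorphic functions that fixes $\dimension_\mathbb{C}\H^0(\mathbb{C}_X) = 1$ and thereby seeds the entire computation.
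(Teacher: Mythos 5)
Your proof is correct and follows essentially the same route as the paper: both apply Serre duality to relate the cohomology of $\mathbb{C}_X$ and $K_X$, then use Riemann--Roch for the trivial bundle to get $\dimension_\mathbb{C}\H^0(K_X)=g$, and finally Riemann--Roch for $K_X$ to extract $\grad(K_X)=2g-2$. The only difference is cosmetic ordering — you compute $\dimension_\mathbb{C}\H^1(\mathbb{C}_X)=g$ first and then dualize, while the paper substitutes Serre duality directly into the Riemann--Roch identity for $\mathbb{C}_X$ — and you make explicit the use of constancy of global holomorphic functions, which the paper leaves implicit.
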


\vspace{1cm}
\centerline{\adforn{49} } 
\vspace{1cm}

With this in hand we will easily compute the dimension of the space
of \emph{holomorphic} quadratic differentials on a compact Riemann surface \( X\):

Since the degree is a group morphism, \( \grad(K_X^*)=-\grad(K_X)=2-2g\).

There are three different cases:

If \( g>1\), \( \grad(K_X^*)<0\) so \( K_X^*\) has no holomorphic sections. Thus  \( \H^0(K_X^*)=0\).
By the duality theorem,
\[ \dim\H^0(Q_X)=\dim\H^0(K_X\otimes K_X)=\dim\H^1(K_X^*)\]
so, to compute this last number, we can resort to the the Riemann-Roch  theorem:
\[ \dim\H^0(K_X^*)-\dim\H^1(K_X^*)=\grad(K_X^*)-g+1=3-3g\]
since \( \H^0(K^*_X)=0\) we conclude that
\( \dimension\H^0(Q_X)=3g-3\).

If \( g=1\), then \( \dimension\H^0(K_X) =1\), so there is at least
one holomorphic differential not identically zero.
On the other hand,  \( \grad(K_X)= 2g-2=0\), \ie, any holomorphic
differential has vanishing divisor, in particular it doesn't have zeros.
This last assertion is equivalent to
 \( K_X\) being a trivial bundle, so  \( Q_X=K_X\otimes K_X\) is also trivial.

The only case remaining is \( g=0\):
in this case, \( \grad(K_X)= 2g-2 = -2\) and \( \grad(Q_X) = -4\), so \( Q_X\)
has no holomorphic sections.

Let's remark that in any case, since the degree of the canonical bundle is \( 2g-2\), the degree of \( Q_X\) is  \( 4g-4\).

\subsection{The spectral cover}%
\label{sec:cubierta-espectral}

In this section we define the \emph{spectral cover} for a given quadratic differential.differential.

The spectral cover%
\footnote{We took the name from \cite{bridgeland2015}. In other situations,
	for example in the study of flat surfaces, it is called the \emph{orientation cover}, because pulling back the foliation to this cover gives an orientable
	foliation.}
	is a construction that  allows us to take a ``square root'' of
	the quadratic differential.

More precisely, the spectral cover is a two fold branched covering of the
original surface \( X\), with the property that the pullback of the quadratic
differential splits as the tensor square of a global meromorphic \( 1-\)form.

This construction is relevant because it allows us to define
the periods of the quadratic differential.
The periods are used in a similar fashion as in the theory of
Abelian differentials where they are used to
parametrize certain  
families of differentials.%
\footnote{For a brief exposition on \emph{period coordinates} in the case of
Abelian differentials, see
 \cite{wright2014translation}.}

Let \( \varphi\) be a meromorphic quadratic differential on \( X\).

Naïvely we could try to define the spectral cover as the set:

\[ X_\varphi:=\{\alpha \in K_x| \alpha \otimes \alpha= \varphi(x)\}	\subseteq K_X\]
and use the restriction of the projection of \( K_X\) to \( X\) as
the covering map.

In this case it is natural to consider the ``tautological'' meromorphic
 \( 1-\)form
that could be defined as%
\footnote{This expression is slightly wrong, but we shall refrain from
	giving more details until we develop the definitive version of the spectral cover.}
\begin{align*}
		X_\varphi & \xrightarrow{\omega} K_X \\
		\alpha &\mapsto \alpha
\end{align*}
and where clearly \( \omega\otimes\omega=\varphi\).

The problem with this construction is that locally  \( X_\varphi\) is
isomorphic to a possibly singular algebraic variety, or is not defined at all.

Specifically, if \( x\) is an order \( n\) zero of \( \varphi\)
it is easy to see that on a neighbourhood of \( x\), \( X_\varphi\) is
isomorphic to the algebraic variety in \( \mathbb{C}^2\) given by the equation
 \( z^n=w^2\). This variety has a singularity at the origin if  \( n>1\).
Even worse, if  \( \varphi\) has a pole at  \( x\), then \( X_\varphi\) is not even
defined over \( x\).

Because of this, we will need to twist the bundle  \( Q_X\) together with the section
\( \varphi\)
and for this we will review theorem \ref{teo:picard-divisores} in greater detail.

Let \( D\) be a divisor on \( X\). By theorem \ref{teo:picard-divisores}
we know there is a line bundle \( L_D\) such that
if \( \sigma\) is a meromorphic section of  \( L_D\) then \( d(\sigma)\)
is in the same class as  \( D\).
Let \( \sigma\) be any meromorphic section of  \( L_D\)
(by theorem \ref{teo:finitud} there is at least one such section).
Then, by the previous remark, there is a meromorphic function 
\( f\) such that \( D=d(\sigma) +d(f)\).
Therefore the section \(\sigma_D:= f\sigma\) has divisor equal to  \( D\).

Let us remark that the property \( d(\sigma_D)=D\) uniquely determines
the section \( \sigma_D\) up to a non zero scalar multiple.

To twist the bundle \( Q_X\) and solve the problems discussed early on, the idea
is to take the tensor product with a bundle \( L_D\)
in such a way as to make the section \( \varphi\otimes \sigma_D\) to only
have simple zeros.

Let \( D_\varphi= d(\varphi)\) be the divisor of the quadratic differential  \( \varphi\).

If \( D_\varphi=\sum_{i=0}^n n_i p_i- \sum_{i=0}^m m_i q_i\), where the \( n_i\)
and the \( m_i\) are positive integers, then define the divisor 
\( D:=\sum_{i=0}^m 2 \left \lceil \frac{m_i}2 \right \rceil  q_i -\sum_{i=0}^n 2\left \lfloor \frac{n_i}2 \right \rfloor  p_i \).
Clearly \( D_\varphi + D\) is zero at every regular point and every even order critical point, and is exactly one at odd order critical points.

Consider the bundle \( Q_\varphi:= Q_X\otimes L_D\).
The tensor product \( \varphi\otimes \sigma_D\) is a section of  \( Q_\varphi\)
with divisor equal to \( D_\varphi +D\), so it is a holomorphic section 
with simple zeroes at the even order critical points of  \( \varphi\) and no
zeros of higher order.

We can also define the divisor
\( D/2:=\sum_{i=0}^m  \left \lceil \frac{m_i}2 \right \rceil  q_i -\sum_{i=0}^n \left \lfloor \frac{n_i}2 \right \rfloor  p_i\)
then \( 2(D/2)=D\) so that \( L_{D/2}\otimes L_{D/2} = L_D\).%
\footnote{Note that the divisor  \( D\) is the only ``even'' divisor such that  \( \varphi\otimes\sigma_D\) is holomorphic and doesn't have higher order zeros.}
Define \( K_\varphi:= K_X \otimes L_{D/2}\). Then  \( K_\varphi \otimes K_\varphi = Q_\varphi\).

With this definitions we can finally define the \emph{spectral cover}:

\begin{definicion}
The \emph{spectral cover} of the quadratic differential \( \varphi\)
is the set
\[ X_\varphi := \{(x,\alpha)\in K_\varphi | \alpha \otimes\alpha = \varphi(x)\otimes \sigma_D(x) \}\subseteq K_\varphi.\]
Let \( \pi\) denote the restriction of the projection of  \( K_\varphi\) over \( X\) to \(X_\varphi\).
\end{definicion}

\begin{proposicion}\label{prop:propiedades-cubierta-espectral}
	Let \( X_\varphi\) be the spectral cover of  \( \varphi\). Then:
	\begin{enumerate}
			 \item \( X_\varphi\) is a Riemann surface.
			\item The function \( \pi\) holomorphic and
				a two-sheeted branched covering of \( X\).
	The branch points of  \( \pi\) are exactly the zeros an poles of odd order of  \( \varphi\).
	\item There is a  meromorphic \( 1-\)form \( \omega\) on \( X_\varphi\) such that \( \omega\otimes\omega= \pi^*(\varphi)\).
	\end{enumerate}

\end{proposicion}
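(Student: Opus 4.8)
The plan is to reduce all three claims to the local equation $w^{2}=g(z)$, where $g$ is a local holomorphic expression for the section $s:=\varphi\otimes\sigma_{D}$ of $Q_{\varphi}=K_{\varphi}\otimes K_{\varphi}$. The decisive feature, built into the choice of $D$, is that $s$ is holomorphic everywhere and that its divisor $D_{\varphi}+D$ has coefficient $1$ at the odd-order critical points of $\varphi$ and $0$ at every other point; thus $s$ vanishes, and only simply, exactly at the odd-order zeros and poles of $\varphi$. Fix $x\in X$, a coordinate $z$ centered at $x$, and a local frame $e$ of $K_{\varphi}$, so that $e\otimes e$ frames $Q_{\varphi}$ and $s=g(z)\,e\otimes e$ with $g$ holomorphic. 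Writing a fibre element as $\alpha=w\,e$, the defining relation $\alpha\otimes\alpha=s$ becomes $w^{2}=g(z)$, so that $\pi^{-1}(U)\cap X_{\varphi}=\{(z,w): w^{2}=g(z)\}$.

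For \textbf{(1)} and \textbf{(2)} I would treat the two local cases. If $g(x)\neq 0$ (a regular point or an even-order critical point), then on a simply connected $U$ there is a holomorphic branch $\sqrt{g}$, and $X_{\varphi}$ splits as the two disjoint graphs $w=\pm\sqrt{g}$, each carried biholomorphically onto $U$ by $\pi$; here $\pi$ is an unramified double cover. If $g$ has a (necessarily simple) zero at $x$, write $g(z)=z\,h(z)$ with $h(x)\neq 0$, choose a local square root $k=\sqrt{h}$, and set $u:=w/k(z)$; then $u^{2}=z$, the assignment $(z,w)\mapsto u$ is a homeomorphism onto a disc with holomorphic inverse $u\mapsto(u^{2},u\,k(u^{2}))$, and in this chart $\pi$ reads $u\mapsto u^{2}$. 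This exhibits $X_{\varphi}$ as a smooth complex curve and $\pi$ as a holomorphic map of degree two, ramified to order two exactly over the simple zeros of $s$, i.e.\ over the odd-order critical points of $\varphi$. Since $X_{\varphi}$ is a closed analytic subset of the manifold $K_{\varphi}$ it is Hausdorff and second countable, and it is compact because $\pi$ is finite and $X$ is compact; it is connected as soon as there is one branch point, so that $s$ is not a global square.

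For \textbf{(3)} I would use the tautological section. The inclusion $X_{\varphi}\subseteq K_{\varphi}$ gives a holomorphic section $\lambda$ of $\pi^{*}K_{\varphi}$ sending $(x,\alpha)$ to $\alpha$, and by construction $\lambda\otimes\lambda=\pi^{*}s$. Fixing the section $\sigma_{D/2}$ of $L_{D/2}$ with $d(\sigma_{D/2})=D/2$, normalized so that $\sigma_{D/2}\otimes\sigma_{D/2}=\sigma_{D}$, and using $K_{\varphi}=K_{X}\otimes L_{D/2}$, the product $\mu:=\lambda\otimes\pi^{*}\sigma_{D/2}^{-1}$ is a meromorphic section of $\pi^{*}K_{X}$ with $\mu\otimes\mu=\pi^{*}s\otimes\pi^{*}\sigma_{D}^{-1}=\pi^{*}\varphi$. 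Finally the pullback of $1$-forms induces a bundle map $\pi^{*}K_{X}\to K_{X_{\varphi}}$, compatible with tensor squares, and carries $\mu$ to a meromorphic $1$-form $\omega$ on $X_{\varphi}$ with $\omega\otimes\omega=\pi^{*}\varphi$ as quadratic differentials on $X_{\varphi}$ (the right-hand side being the pullback of $\varphi$ in the sense defined earlier). The step I expect to be the crux is checking that $\omega$ is genuinely meromorphic across a ramification point: in the chart $z=u^{2}$ with $\varphi=f(z)\,dz^{2}$ one computes $\pi^{*}\varphi=4u^{2}f(u^{2})\,du^{2}$, and because $x$ is an \emph{odd}-order critical point we have $f(z)=z^{m}(\text{unit})$ with $m$ odd, so $f(u^{2})=u^{2m}(\text{unit})$ and $\sqrt{f(u^{2})}=u^{m}(\text{unit})$ is single-valued; hence $\omega=2u^{m+1}(\text{unit})\,du$ is an honest meromorphic form. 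This single-valuedness of the square root is exactly what passing to the double cover was meant to achieve, and it is the same simple-zero condition that makes $X_{\varphi}$ smooth rather than acquiring the singular local model $z^{n}=w^{2}$ discussed above; so both halves of the proposition rest on the careful choice of the twisting divisor $D$.
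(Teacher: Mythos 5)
Your proposal is correct and follows essentially the same route as the paper: parts (1) and (2) via the local model $w^{2}=g(z)$ with $g$ having only simple zeros (the paper asserts smoothness directly where you spell out the charts), and part (3) via the tautological section divided by $\pi^{*}\sigma_{D/2}$ and composed with $\Tangente\pi$, which is exactly the paper's pointwise definition $\omega(x,\alpha)(v)=\frac{\alpha}{\sigma_{D/2}(x)}\left[\Tangente_{(x,\alpha)}\pi(v)\right]$ phrased in bundle-theoretic language. Your extra local check of meromorphy at ramification points is a harmless (indeed automatic, since the dual of $\Tangente\pi$ is a holomorphic bundle map) refinement of the same argument.
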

\begin{proof}
	Consider a trivialization of  \( K_\varphi\) over an open set \( U\).
	This trivialization gives rise to a trivialization of the bundle
	\( Q_\varphi\), and in this trivialization
	\( \varphi\otimes \sigma_D\) corresponds to a holomorphic function  \( f\) on \( U\).
	Thus \( \pi^{-1}(U)\) is equivalent to the set
	\[ \{(x,z)\in U\times \mathbb{C}| z^2=f(x)\}\subseteq U\times \mathbb{C}.\]
	Since the function  \( f\) is holomorphic and only has simple zeros, 
	the set \( \pi^{-1}(U)\) is smooth.
	
	The trivialization was arbitrary, thus the assertion above is valid
	in a neighbourhood of every point of  \( X_\varphi\),
	so it is a Riemann surface.

	In any of this trivializations, the function  \( \pi\) is equivalent to the projection of \( U\times \mathbb{C}\) to the first coordinate.
	This proves the second assertion.

We can choose the section  \( \sigma_{D/2}\)
in such a way that \( \sigma_{D/2}\otimes \sigma_{D/2}=\sigma_D\) and that
if \( x\in X\) and \( \alpha\in (K_\varphi)_x\) then \( \frac\alpha{\sigma_{D/2}(x)}\in (K_X)_x\).

	Hence \( \frac\alpha{\sigma_{D/2}(x)}\) is a linear functional on  \( \Tangente_xX\).

	We shall define \( \omega\) describing its action on tangent vectors:

	If \( v\in \Tangente_{(x,\alpha)}X_\varphi\) define
	\[ \omega{(x,\alpha)}(v):= \frac\alpha{\sigma_{D/2}(x)}\left[\Tangente_{(x,\alpha)}\pi(v)\right]\]
	Since \( \Tangente_{(x,\alpha)}\pi\) is a linear transformation and \( \frac\alpha{\sigma_{D/2}(x)}\)
	is also linear, \( \omega{(x,\alpha)}\) is a linear functional on \( \Tangente_{(x,\alpha)}X_\varphi\)
	so that \( \omega\) is a meromorphic
	\( 1-\)form on \( X_\varphi\). We will call \(\omega\) the tautological
	form.

	To compare the differentials \( \omega\otimes \omega\) and  \(\pi^*(\varphi)\) it is enough to compute their value on a vector of the form \(v\otimes v\).
	Let \( v\in \Tangente_{(x,\alpha)}X_\varphi\) then
	\( \omega\otimes \omega(v\otimes v)=\frac{\alpha\otimes \alpha}{\sigma_D(x)}[\Tangente_{(x,\alpha)}\pi(v) \otimes \Tangente_{(x,\alpha)}\pi(v)] \)
	but since \( (x,\alpha)\in X_\varphi\) we have that \( \frac{\alpha\otimes \alpha}{\sigma_D(x)}=\frac{\varphi(x)\otimes \sigma_D(x)}{\sigma_D(x)}=\varphi(x) \) thus \( \omega\otimes \omega(v\otimes v)= \varphi(x)[\Tangente_{(x,\alpha)}\pi(v) \otimes \Tangente_{(x,\alpha)}\pi(v)]\).
	We have just shown that \( \omega\otimes \omega= \pi^*(\varphi)\).

\end{proof}

Fiberwise multiplication by  \( -1\), is an automorphism of the bundle  \( K_\varphi\). It restricts to a holomorphic involution of  \( X_\varphi\).
We denote this involution by \( \tau\); \( \tau\) preserves the fibers of \( \pi\)
and \( \omega\) is  \emph{anti-invariant} under \( \tau\), \ie \( \tau^*(\omega)=-\omega\).

There exists a unique meromorphic vector field  \( V_\omega\) satisfaying
\( \omega(V_\omega)=1\),
\ie, which is dual to the \( 1-\)form \( \omega\).
It is clear that \( V_\omega\) is tangent to the foliation of the quadratic differential \( \omega\otimes\omega\),
so it provides an orientation for its leaves.
The vector field \( V_\omega\) is anti-invariant under \( \tau\).

From this facts we can conclude that \( \pi\) maps integral trajectories of  \( V_\omega\) to horizontal trajectories of \( \varphi\);
the foliation of  \( \varphi\) is orientable if and only if
\( X_\varphi\) has two components. Each of the two points on a generic fiber of  \( \pi\) correspond to each of the two possible orientations of the 
foliation and \( \tau\) swaps the two orientations.

If we consider the metric of the quadratic differential 
\( \omega\otimes \omega\), \( \pi\) is an isometry in the complement of the
branch points.

Since \( X_\varphi\) is a branched covering of two sheets,
\( \pi\) is degree two, so is locally equivalent to the transformation
\( z\mapsto z^2\).
If \( x\in X\) is a critical point of odd order of  \( \varphi\)
and we write the differential in a local chart \( \xi\),
then we have
\( \varphi= \xi^{2n+1}d\xi^2\), so locally \( \pi(z)=\xi(z)=z^2\),
and after taking the pullback of \(\varphi\) under \(\pi\) we conclude that:
\[ \pi^*(\varphi)=\xi(z)^{2n+1}d\xi(z)^2=z^{4n+2}2z^2dz^2=2z^{4n+4}dz^2.\]
Among other things, this implies that the fiber over a simple pole
is one regular point of \( \pi^*(\varphi)\).
In particular, if the differential \( \varphi\) only has simple poles and zeroes,
the differential \( \omega\) is holomorphic,
\ie it is an Abelian differential.

\subsection{Homology of the spectral cover}

In this section we will define the \emph{hat homology}.
This will allow us to see the quadratic differential as a
cohomology class, and this in turn will be necesary to define
the periods of the quadratic differential.

We shall work with a fixed quadratic differential
\( \varphi\) for the rest of the section.

Let \(\widehat{X} \) be the spectral cover of \( \varphi\).
Let \( \omega\) be the meromorphic tautological \( 1-\)form.

We let \( \CritInf(\omega)\) denote the set of critical points of \( \omega\)
 and \( \widehat X^\circ\) denote the complement .
Since \( \omega\) is holomorphic on \( \widehat X^\circ\),
it is a closed form and hence defines a de Rham cohomology class.

If \( [\alpha]\in \H_1(\widehat X)\) is the homology class of a \( 1-\)chain \( \alpha\), we can integrate \( \omega\) along \( \alpha\)
and the result only depends on the homology class.

The involution  \( \tau\) of \( \widehat X\) induces a linear action on \( \H_1(\widehat X)\)
that is also an involution, so the homology group splits as the direct sum
of the \( \tau-\)invariant  part and the \( \tau-\)anti-invariant part:
\[ \H_1(\widehat X)= \H_1(\widehat X)^+ \!\oplus \H_1(\widehat X)^-\]
If \( [\alpha]\) is an invariant class, then:
\[ \int_\alpha \omega = \int_{\tau(\alpha)}\omega= \int_\alpha \tau^*(\omega)= -\int_\alpha \omega\]
so the integral vanishes.
So we are led to consider only anti-invariant homology classes.

\begin{definicion}
	The \emph{hat homology group} of the differential \( \varphi\) is the \( \tau-\)anti-invariant part of the first homology group of \( \widehat X^\circ\).
	We write \( \widehat{\H}(\varphi)\) for this group.
\end{definicion}

As we have said before, the quadratic differential \( \varphi\) gives rise to
a group homomorphism that we shall denote \( \widehat{\varphi}\):
\begin{align*}
\widehat{\varphi}:\widehat{\H}(\varphi)&\xrightarrow{~~~~~~~} \mathbb{C}\\
	 [\alpha] &\mapsto  \frac12\int_\alpha \omega.
\end{align*}

Call this homomorphism the \emph{period} of \( \varphi\).\label{pag:periodo}
We shall also call \( \widehat{\varphi}([\alpha])\) the \( \varphi-\)period of \( \alpha\), or just the period of  \( \alpha\).

It is clear that \( \Hom_\mathbb{Z}(\widehat{\H}(\varphi),\mathbb{C})\cong \mathbb{C}^N\) where \( N\) is the rank of the hat homology group of  \( \varphi\).

Later we shall describe a way to revert this construction and get a quadratic differential from a group morphism. So it is important to compute the rank of the hat homology group.

As a first step let us find the genus of \(\widehat X\) in terms of the genus of  \( X\) and the order of the zeros and poles of  \( \varphi\).
For this we use the \emph{Riemann-Hurwitz formula} applied to the projection  \( \pi\).
Since \( \pi\) is a two sheeted branched covering, it has degree two, the ramification points have multiplicity two and al the other points have multiplicity one.

With this in mind, the formula reads:
\[ 2\hat g -2 = 2(2g -2) + \#\{\textrm{ramification points}\}\]
where \( \hat g\) is the genus of \( \widehat X\) and \( g\) is the genus of  \( X\).
By the construction of the spectral cover, it is clear that the number of
ramification points is the amount of odd order critical points.
In general, it is hard to say anything about this number, but in the case of a
GMN differential, since they only have simple zeros, we can easily compute the
number of these:

Using the notation of the last section,
the divisor of the quadratic differential is 
\( D_\varphi=\sum_{i=0}^{n} n_i p_i- \sum_{i=0}^m m_i q_i\) where \( n_i=1\) for
every \( i\),
so that
\[4g-4= \grad(Q_X) = n-\sum_{i=0}^m m_i \]
\ie 
\(n= 4g-4+\sum_{i=0}^m m_i \). Let \( m_p\) be the number of even order poles.
Then:
\[ 2\hat g  = 4g -2 + (4g-4+\sum_{i=0}^m m_i)+ (m-m_p)\]
which is the rank of \( \H_1(\widehat X)\).

Recall that the simple poles of \( \varphi\) become regular points
on the spectral cover, and that every even order pole have
exactly two preimages on  \( \widehat X\).

Applying this to the surface \( \widehat X\) and given that \( \CritInf(\omega)\) 
is the preimage of the infinite critical points of \( \varphi\),
we conclude that  \( \CritInf(\omega)\) has exactly \( m+m_p -s\) points, where \( s\) is the number of simple poles of \( \varphi\).

Now, removing  \( k\) points of a compact surface, increases the rank of the first
homology group by  \( k-1\).
This is a classic result in algebraic topology that can be proved, for example,
with the Mayer-Vietoris exact sequence.
With this in mind, we have:
\[ \rango \H_1(\widehat X ^\circ)= \rango \H_1(\widehat X) +(m+m_p -s-1)= 8g-6+\sum_{i=0}^m m_i+2m-s-1.\]

Let \( X^\circ\) be the complement of the infinite critical points of  \( \varphi\).
Then
\[ \rango\H_1(X^\circ)= \rango\H_1(X) +m-s-1=2g+m-s-1. \]

It is clear that \( X^\circ\) is the quotient under \( \tau\) of \( \widehat X^\circ\).
In general, for finite group actions we have:%
\footnote{This assertion is false for integer coefficients, hence the need to take
	rational coefficients.
	(Actually, it is enough to take a coefficient ring in which the order
	of the group is an invertible element)
	See \cite{bredon1972introduction}, Theorem \( 2.4\) p. \( 120\).}
\[ \H_\bullet(X,\mathbb{Q})^G \cong \H_\bullet(X/G,\mathbb{Q})\]
so in this case, we can conclude that
\( \rango \H_1(X^\circ)= \rango \H_1(\widehat X^\circ)^+\).

Finally, putting everything together, we have:
\begin{align*}
	\rango\widehat\H(\varphi)=\rango \H_1(\widehat X^\circ)^-=&\rango \H_1(\widehat X^\circ) -\rango \H_1(X^\circ)\\
	=& 8g-6+\sum_{i=0}^m m_i+2m-s-1 -(2g+m-s-1)\\
        = & 6g -6 + 	\sum_{i=0}^m m_i+m.
\end{align*}

Thus we have proved:
\begin{proposicion}\label{prop:dimension-h-gorro}
	The rank of the hat homology group of a GMN differential is:
	\[ N(\varphi):=6g -6 + 	\sum_{i=0}^m m_i+m\]
	where \( m\) is the number of poles of \( \varphi\) and \( m_i\) is the order of the \( i-\)th pole.

\end{proposicion}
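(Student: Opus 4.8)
The plan is to compute $\rango\widehat{\H}(\varphi)=\rango\H_1(\widehat{X}^\circ)^-$ by exploiting the eigenspace splitting $\H_1(\widehat{X}^\circ;\mathbb{Q})=\H_1(\widehat{X}^\circ)^+\oplus\H_1(\widehat{X}^\circ)^-$ for the involution $\tau$, together with the identification $\H_1(\widehat{X}^\circ)^+\cong\H_1(X^\circ;\mathbb{Q})$ arising from the quotient $X^\circ=\widehat{X}^\circ/\tau$. This reduces the problem to
\[ \rango\widehat{\H}(\varphi)=\rango\H_1(\widehat{X}^\circ)-\rango\H_1(X^\circ), \]
and each term on the right is an ordinary first Betti number computable by elementary topology. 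The two Betti numbers reduce, via the point-removal formula (deleting $k$ points from a closed surface raises $b_1$ by $k-1$), to the genera of $\widehat{X}$ and $X$ together with a count of the infinite critical points and their preimages.

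First I would determine $\hat g$, the genus of $\widehat{X}$, from Riemann-Hurwitz applied to the degree-two branched cover $\pi$:
\[ 2\hat g-2=2(2g-2)+\#\{\text{ramification points}\}. \]
By Proposition \ref{prop:propiedades-cubierta-espectral} the ramification locus is exactly the set of odd-order critical points, which for a GMN differential consists of the simple zeros together with the odd-order poles. The number of zeros follows from $\grad(Q_X)=4g-4$: since every zero is simple, there are $n=4g-4+\sum m_i$ of them. Writing $m_p$ for the number of even-order poles, the odd-order poles number $m-m_p$, so the ramification count is $n+(m-m_p)$ and hence $2\hat g=8g-6+\sum m_i+m-m_p$.

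Next I would count the points deleted in passing to the open surfaces. On $X$ these are the infinite critical points, that is, the poles of order at least two, of which there are $m-s$ where $s$ is the number of simple poles; this yields $\rango\H_1(X^\circ)=2g+(m-s-1)$. On $\widehat{X}$ the deleted set $\CritInf(\omega)$ is the $\pi$-preimage of those poles: each even-order pole is unramified and contributes two preimages, while each odd pole of order at least three is a branch point and contributes one, for a total of $m+m_p-s$ points, whence $\rango\H_1(\widehat{X}^\circ)=2\hat g+(m+m_p-s-1)=8g-6+\sum m_i+2m-s-1$. Forming the difference of the two ranks, the $s$ terms cancel (the $m_p$ dependence having already dropped out of $\rango\H_1(\widehat{X}^\circ)$), leaving $6g-6+\sum m_i+m$, as claimed.

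The Riemann-Hurwitz and point-removal arithmetic is routine; the step that demands the most care is the combinatorial bookkeeping of the critical points --- keeping the three kinds of poles (simple, even, and odd of order at least three) straight, correctly deciding which are branch points and which split into two preimages, and tracking $s$ and $m_p$ so that they cancel at the end. A secondary subtlety worth flagging is that the identification $\H_1(\widehat{X}^\circ)^+\cong\H_1(X^\circ)$ holds only with rational coefficients (or over any ring in which $2$ is invertible), so the entire rank computation must be carried out over $\mathbb{Q}$.
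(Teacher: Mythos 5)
Your proposal is correct and follows essentially the same route as the paper's proof: Riemann--Hurwitz for the genus of $\widehat{X}$, the point-removal formula for the two open surfaces, and the identification $\rango\H_1(\widehat{X}^\circ)^+ = \rango\H_1(X^\circ;\mathbb{Q})$ via the quotient by $\tau$, with the same bookkeeping of $s$, $m_p$, and the preimages of the infinite critical points. Your closing remark about needing rational coefficients for the eigenspace identification matches the caveat the paper itself makes.
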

This Proposition corresponds to Lemma 2.2 of \cite{bridgeland2015},
and the proof is loosely based on that of the lemma.

Before ending this section, let's remark that the intersection form of
\( \H_1(\widehat X^\circ)\) restricted to  \( \widehat \H(\varphi)\) is a
degenerate bilinear form.

Later we shall need another bilinear pairing that is not degenerate.
Recall that one way to define the intersection pairing is combining the
isomorphism coming from Poincare duality and the one coming from the
universal coefficient theorem.

In our situation, instead of using Poincare duality, we can use a slightly
more general version of the duality theorem:

\begin{teorema*}[Lefschetz duality]
Let \( X\) be an oriented \( n\)-dimensional compact manifold with boundary.
There is an isomorphism
\[ D_L:\H^\bullet(X, \partial X) \rightarrow H_{n-\bullet}(X\setminus \partial X)\]
\end{teorema*}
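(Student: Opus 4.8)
The plan is to realize $D_L$ as a cap product with a relative fundamental class and to prove it is an isomorphism by a Mayer--Vietoris induction, which compactness of $X$ makes available in finitely many steps. First I would dispose of the target: by the collar neighbourhood theorem $X$ deformation retracts onto the complement of a collar, and the same retraction exhibits the interior $X\setminus\partial X$ as homotopy equivalent to $X$, so $H_{n-\bullet}(X\setminus\partial X)\cong H_{n-\bullet}(X)$. It therefore suffices to produce a natural isomorphism $\H^k(X,\partial X)\cong H_{n-k}(X)$. The crucial ingredient is the \emph{relative fundamental class}: the orientation gives a coherent choice of local orientations $\mu_x\in H_n(X,X\setminus\{x\})\cong\mathbb{Z}$ at each interior point, and the standard compactness argument (exactly as in the closed case) shows these assemble into a unique class $[X,\partial X]\in H_n(X,\partial X)$ restricting to $\mu_x$ under $H_n(X,\partial X)\to H_n(X,X\setminus\{x\})$ for every interior $x$. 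This is the one place the hypothesis \emph{oriented} is used (it is what makes the integral statement hold).

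Next I would define the duality map by $D_L(\alpha):=\alpha\cap[X,\partial X]$, using the relative cap product
\[ \cap\colon \H^k(X,\partial X)\otimes H_n(X,\partial X)\to H_{n-k}(X). \]
Note that the map lands in \emph{absolute} homology precisely because the cohomology factor is relative to $\partial X$; this asymmetry is what makes the bookkeeping close up correctly. To prove $D_L$ is an isomorphism I would induct on the pieces of a finite cover of $X$ by coordinate charts, which exists by compactness. The base case is the local model — a Euclidean half-space or a closed disk — where both sides are computed directly and $D_L$ is checked by hand to be an isomorphism. For the inductive step, given $X=U\cup V$ with the duality isomorphism already established for $U$, $V$, and $U\cap V$, I would write down the Mayer--Vietoris sequences in relative cohomology and in absolute homology, connect them by the cap-product maps, and invoke the five lemma to conclude the result for $X$.

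The delicate point — where essentially all the content lies — is showing that this ladder of Mayer--Vietoris sequences commutes, i.e.\ that capping with the fundamental class is \emph{natural with respect to the connecting homomorphisms}. This requires the naturality of the cap product under restriction, together with a careful compatibility among the several fundamental classes: those of $U$, $V$, and $U\cap V$ must be chosen to agree with the restriction of $[X,\partial X]$, and the compatibility holds only up to the correct signs, which must be tracked. Once the squares of the ladder are known to commute, the five lemma and the finiteness of the cover complete the argument. For the general non-compact statement one would replace the finite induction by a direct limit over compact subsets (passing to compactly supported cohomology on the interior), but the compactness of $X$ assumed here renders that refinement unnecessary.
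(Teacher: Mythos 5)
The paper itself gives no proof of this statement: Lefschetz duality is quoted there as a classical background theorem (used only to build the pairing between relative and absolute homology), so your proposal can only be judged on its own merits. Your overall architecture — collar retraction to identify $H_{n-\bullet}(X\setminus\partial X)$ with $H_{n-\bullet}(X)$, construction of the relative fundamental class $[X,\partial X]\in H_n(X,\partial X)$, and the duality map $\alpha\mapsto\alpha\cap[X,\partial X]$ via the relative cap product $\H^k(X,\partial X)\otimes H_n(X,\partial X)\to H_{n-k}(X)$ — is the standard and correct skeleton.

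However, there is a genuine gap, and it is exactly at the point you dismiss in your last sentence. You claim that compactness of $X$ makes compactly supported cohomology unnecessary; this is backwards. In a Mayer--Vietoris induction over a cover by coordinate charts, the pieces $U$, $V$, $U\cap V$ are \emph{open} subsets, hence non-compact manifolds, no matter how compact $X$ is, and for them the duality statement with ordinary cohomology is false. Concretely, for an interior chart $U\cong\mathbb{R}^n$ one has $\H^k(U)=0$ for $k>0$ while $H_0(U)=\mathbb{Z}$, so $\H^n(U)\not\cong H_0(U)$; for a boundary chart $U\cong\mathbb{R}^{n-1}\times[0,1)$ both $U$ and $\partial U$ are contractible, so the long exact sequence of the pair gives $\H^k(U,\partial U)=0$ for \emph{all} $k$, while $H_0(U\setminus\partial U)=\mathbb{Z}$. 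Thus your base case, ``checked by hand,'' fails, and the five-lemma ladder never gets started. The standard repairs are: (i) run the induction with compactly supported cohomology $\H^\bullet_c$ on open pieces (this is the usual proof, and it is needed precisely because the pieces are non-compact, not because $X$ is); (ii) avoid induction over open covers entirely by forming the closed double $DX=X\cup_{\partial X}X$, applying Poincar\'e duality to $DX$, and extracting the relative statement from the exact sequences of the pair together with the reflection involution; or (iii) induct over a decomposition of $X$ into \emph{compact} pieces (handles), which is possible but requires strengthening the inductive statement to the form $\H^k(X,A)\cong H_{n-k}(X,B)$ where $\partial X=A\cup B$ is split along a corner, since cutting $X$ creates new boundary that must be kept on the opposite side of the duality; your ladder of one relative-cohomology and one absolute-homology Mayer--Vietoris sequence does not have matching terms for such decompositions.
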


In the case that \( X\) is a surface, we get an isomorphism between
\(\H^1(X, \partial X)\) and \({ H_1(X\setminus \partial X)}\).
Besides, for the case we are interested in, the surfaces have torsion-free homology
so from the universal coefficient theorem we conclude that:
\[ \H^\bullet(X,\mathbb{Z})\cong \Hom_\mathbb{Z}(\H_\bullet(X,\mathbb{Z}),\mathbb{Z}).\]
Combining both isomorphisms we can define a non-degenerate bilinear pairing:
\[ \langle ~~,~~\rangle: \H_1(X, \partial X)\times H_1(X\setminus \partial X)\rightarrow \mathbb{Z}\]

The surface \( \widehat X^\circ\) is not a compact surface with boundary, however,
for every point of \( \CritInf(\omega)\) we can remove a small open disc around that point.
Let \( \widetilde X\) be the resulting surface. Then it is clear that  \( \widehat X^\circ\) and
\( \widetilde X\) are homotopically equivalent and that \( \widetilde X\) is a compact surface with boundary.

Finally, using homotopy invariance of homology, and the excision axiom
we can define an isomorphism
\( \H_1(\widetilde X, \partial \widetilde X)\cong\H_1(\widehat X, \CritInf(\omega))\)
that combined with the bilinear pairing previously defined gives:
\[ \langle ~~,~~\rangle: \H_1(\widehat X, \CritInf)\times H_1(\widehat X^\circ)\rightarrow \mathbb{Z}\]

As in the case of the usual intersection pairing, this bilinear form can be
computed as the oriented intersection number of transversal curves representing the
homology classes.
We shall use this fact later when we define the  \emph{standar saddle classes} which
in turn can be used to define a basis for the hat homology group of the surface.

\section{Horizontal strips and periods}
\label{sec:franjas}

In this section we will focus our study to the case of quadratic differentials whose decomposition consists only of horizontal strips and half-planes.

We shall define a natural basis for the hat homology group of those
quadratic differentials and explicitly describe the set of
isomorphism classes of horizontal strips.

\subsection{Horizontal strips and saddle classes}

The goal of this section is to find a family of standard models
for the horizontal strips.

\subsubsection{Trajectories}

So far, in the study of quadratic differentials, we have focused only on the horizontal foliation. However, there is a whole family
of foliations parametrized by \( \mathbb{S}^1\cong \mathbb{R} /\mathbb{Z}\).

To see this, recall that those structures that are invariant under
normal coordinate change maps are intrinsic to the differential.
In this way we defined the metric and horizontal foliation.

Locally, these structures correspond to structures of \( \mathbb{C}\) which
are preserved under translations and the involution \( z\mapsto -z\).

We define the \emph{constant foliation of phase \( \theta\)} to
be the foliation of the complex plane whose
leaves are the straight lines forming an angle \( \pi \theta\) with the
horizontal axis.

It is clear that every constant foliation of \( \mathbb{C}\) is invariant
under translations and the involution \( z\mapsto -z\),
thus it should be possible to define the notion of foliation of phase \( \theta \)
for any quadratic differential.

Following definition \ref{def:metrica-foliacion} we have:

\begin{definicion}
The \emph{foliation of phase \( \theta\)} of a quadratic differential \( \varphi\) is the only
foliation such that in every normal coordinate chart
it is equal to the constant foliation of phase \( \theta\) in \( \mathbb{C}\).

The leaves of this foliation are called \emph{trajectories of phase  \( \theta\)}.
\end{definicion}
Whenever we speak of a trajectory of a differential, we shall mean a trajectory of any phase.
There are several characterization of this trajectories:

\begin{proposicion}
Let \( \varphi\) be a quadratic differential  and  \( \theta\in\mathbb{R}/\mathbb{Z}\).
A curve \( \gamma\) is a trajectory of phase \( \theta\) if and only if
it satisfies any of the following properties:
\begin{itemize}
 \item the function \[ q\mapsto \im\left(e^{-i\pi\theta}\int_p^q\sqrt{\varphi}\right)\]
(which is only locally defined) is constant along \( \gamma\)
\item it is a horizontal trajectory of the quadratic differential  \( e^{-2\pi i \theta}\varphi\)
\end{itemize}

\end{proposicion}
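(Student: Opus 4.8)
The plan is to reduce both claimed characterizations to the definitions already established for the horizontal foliation and to the behavior of the local "distinguished parameter" $\psi(q)=\int_p^q\sqrt{\varphi}$ under multiplication of the differential by a unimodular constant. The key observation driving everything is that multiplying $\varphi$ by $e^{-2\pi i\theta}$ rotates the local square root: if $\varphi=f(z)\,dz^2$ then $e^{-2\pi i\theta}\varphi=e^{-2\pi i\theta}f(z)\,dz^2$, and a branch of the square root of $e^{-2\pi i\theta}f$ is $e^{-i\pi\theta}\sqrt{f}$. Hence the distinguished parameter of $e^{-2\pi i\theta}\varphi$ is exactly $e^{-i\pi\theta}\psi$.

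First I would prove the equivalence of the second bullet with the definition. By Definition~\ref{def:metrica-foliacion} applied to the differential $e^{-2\pi i\theta}\varphi$, a curve $\gamma$ is a horizontal trajectory of $e^{-2\pi i\theta}\varphi$ precisely when, in every normal chart $w$ for $e^{-2\pi i\theta}\varphi$, it maps to a horizontal line. Since a normal chart $w$ for $e^{-2\pi i\theta}\varphi$ satisfies $w=e^{-i\pi\theta}\psi$ (up to the ambiguity $w\mapsto\pm w+b$) where $\psi$ is the distinguished parameter of $\varphi$, a horizontal line in the $w$-coordinate, i.e.\ a level set of $\im(w)$, is exactly a level set of $\im(e^{-i\pi\theta}\psi)$ in the $\psi$-picture. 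But a level set of $\im(e^{-i\pi\theta}\psi)$ is a line forming angle $\pi\theta$ with the horizontal axis in the $\psi$-coordinate, which is by definition the constant foliation of phase $\theta$. Reading this through the normal chart $\psi$ of $\varphi$ shows $\gamma$ is a trajectory of phase $\theta$ of $\varphi$ in the sense of Definition~\ref{def:metrica-foliacion}. This chain of identifications is really the whole content, and it is clean once the rotation of the square root is recorded.

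For the first bullet, I would note that the function $q\mapsto\im\!\left(e^{-i\pi\theta}\int_p^q\sqrt{\varphi}\right)=\im(e^{-i\pi\theta}\psi)$ is the imaginary part of the distinguished parameter of $e^{-2\pi i\theta}\varphi$, and by the remark following Definition~\ref{def:metrica-foliacion} (that the horizontal foliation agrees with the level sets of $q\mapsto\im\!\left(\int_p^q\sqrt{\varphi}\right)$), the horizontal trajectories of $e^{-2\pi i\theta}\varphi$ are exactly the level sets of $\im(e^{-i\pi\theta}\psi)$. Thus "$\im(e^{-i\pi\theta}\psi)$ constant along $\gamma$" is equivalent to "$\gamma$ is a horizontal trajectory of $e^{-2\pi i\theta}\varphi$," closing the loop with the second bullet.

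The main obstacle, such as it is, is bookkeeping rather than conceptual: I must check that the branch ambiguity in $\sqrt{\varphi}$ and the sign/translation ambiguity $w\mapsto\pm w+b$ in the choice of normal chart do not affect the foliation. This is handled by the fact, already recorded in the text, that the constant foliations of $\mathbb{C}$ are invariant under $z\mapsto\pm z+b$, so the level sets of $\im(e^{-i\pi\theta}\psi)$ are well defined independently of these choices even though the function itself is only locally defined. I would state this invariance explicitly at the start and then let both equivalences follow from the single identity $\sqrt{e^{-2\pi i\theta}\varphi}=e^{-i\pi\theta}\sqrt{\varphi}$.
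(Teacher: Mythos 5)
Your proposal is correct and takes essentially the same route as the paper's proof: both reduce everything to a normal coordinate chart and exploit the rotation identity $\sqrt{e^{-2\pi i\theta}\varphi}=e^{-i\pi\theta}\sqrt{\varphi}$, which the paper phrases instead as the map $\Phi(z)=e^{-i\pi\theta}z$ satisfying $\Phi^*(dz^2)=e^{-2\pi i\theta}dz^2$ and carrying phase-$\theta$ lines to horizontal lines. The only cosmetic differences are that you prove the second bullet first and chain the first bullet through it, whereas the paper establishes each equivalence directly, and that you make the $\pm$/translation ambiguity explicit where the paper leaves it implicit.
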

\begin{proof}
	Being a trajectory of phase \( \theta\) and all the other properties are
	local. Hence it is enough to work in a normal coordinate chart.
	Let \( w\) be the function given by \( w(q):= \int_p^q\sqrt{\varphi}\). 
	THe map \( w\) is a normal coordinate chart and in terms of  \( w\)
	the function
	\[ q\mapsto \im\left(e^{-i\pi\theta}\int_p^q\sqrt{\varphi}\right)\]
	equals
	\( \im(e^{-i\pi\theta}w(p))\) which clearly is constant along
	the leaves of the foliation of phase \( \theta\) in the chart \( w\).
	This proves equivalence to the first property.

	To prove the second equivalence observe that the transformation\( \Phi(z):=e^{-i\pi\theta}z\)
	maps lines of phase
	\( \theta\) to horizontal lines, and also satisfies \( \Phi^*(dz^2)=e^{-2i\pi\theta} dz^2\)
	hence the horizontal trajectories of this differential are exactly
	the straight lines of phase
	 \( \theta\).
	Locally, this is the content of the second property.
\end{proof}

\begin{observacion}
	Every trajectory of a quadratic differential is a geodesic of the metric.
	It is also clear that the quadratic differentials
	\( \varphi\) and \( e^{2\pi i \theta}\varphi\) define the same metric.
	Not every geodesic is a trajectory, however, it is possible to prove
	that they are a finite union of trajectories of possibly different
	phases, and the points where the geodesics are not differentiable
	are finite critical points.\footnote{Recall that the metric is singular,
	hence the geodesics are not necessarily differentiable.}
	 \end{observacion}

The \emph{saddle connections} are a special kind of trajectory which will
play a major role later on:

\begin{definicion}
	A \emph{saddle conection} of a quadratic differential  \( \varphi\)
	is a trajectory of some phase \( \theta\) connecting two finite critical points of  \( \varphi\).
	Equivalently, it is a saddle trajectory of the differential  \( e^{-2\pi i \theta}\varphi\).
\end{definicion}

\subsubsection{Horizontal strips}

Recall that theorems \ref{teo:descomposicion} and \ref{teo:triangulacion-wkb}
together imply that for
some kind of quadratic differentials, the regions that appear in the decomposition
are all horizontal strips, \ie, are isomorphic to the interior of sets of the form
\[ F_{2h}:=\{z\in \mathbb{C}| -h\leq\im(z)\leq h\}\]
with the standard differential of the complex plane.

A slightly technical point is that
that the isomorphism does not necessarily extend to the boundary of the strip
(the separating trajectories).
However, we can always extend the isomorphism from \( F_{2h}\)
to the surface and get a continuous (but possible non-injective)
function.

The isomorphism between one of this regions and the set
\( F_{2h}\) is far from being unique.
To see this, it is enough to compute the automorphism group of  \( (F_{2h}, dz^2)\).
Since \( F_{2h}\) is a subset of \( \mathbb{C}\) and we are considering the
differential  \( dz^2\),
the automorphisms are necessarily of the form \( z\mapsto \pm z +b\).
For any of this transformations to preserve the strip
\( F_{2h}\) it is necessary that it fixes, or possibly permutes,
the boundary lines.
This in turn implies that \( \im(b)=0\), \ie
the automorphism group of \( F_{2h}\) is the group of
transformations \( z\mapsto \pm z +r\) with \( r\) real.

In certain cases it is possible to reduce even further the amount of isomorphism
between the horizontal strip and the sets  \( F_{2h}\).
For this, let's assume that every horizontal strip has only one zero
in each component of the boundary and choose any isomorphism \( \psi\) from a set  \( F_{2h}\) to the given strip. Extend the isomorphism to
the boundary of the standard strip.
Thus the inverse image of one of the zeros has imaginary part equal to
some \( h\) and the inverse image of the other has imaginary part equal to \( -h\).
Composing \( \psi\) with a horizontal translation
(which is an automorphism of \( F_{2h}\))
we can ensure that the inverse image of the finite critical points is
symmetric with respect to the origin.

Consider now the set  \( F_{2h}\) 
together with the extra data of a pair of marked points in the boundary
that are symmetric with respect to the origin \ie \( \{w,-w\}\) for some \(w \in \mathbb{C}\).

If we now allow only those automorphisms of \( F_{2h}\)
that map marked points to marked points,
it is clear that the only possibilities is the identity automorphism
and the involution \( z\mapsto -z\).

Motivated by these facts, we give the following definition:

\begin{definicion}
	The \emph{horizontal strip of period \( w\)} is the set
	\[ F_w:=F_{\im(w)}=\{z\in\mathbb{C}| |\im(z)|\leq \frac12\im(w)\}\]
	with the marked points \( \pm \frac12 w\).
	The \emph{standard saddle class} is the line segment through the marked points.
	It is a saddle connection of phase \( \arg(w)\).
	It is clear that \( F_{w_1}\) is isomorphic to \( F_{w_2}\) if and only if  \( w_1=\pm w_2\).

\end{definicion}

The previous remarks immediately imply the next proposition:

\begin{proposicion}
	Let \( \varphi\) be a GMN quadratic differential without
	saddle trajectories and without poles of order greater that two.

	In the decomposition of \( \varphi\) there are only horizontal strips
	with at most two finite critical points on the boundary.

	Each one of the horizontal strips is isomorphic to exactly one
	horizontal strip of some period \( w\)
	and besides, the standard saddle class of \( F_w\) maps to the
	unique saddle connection of  \( \varphi\)
	that is contained in the strip and connects the finite critical points in the boundary.

We shall call this saddle connection the \emph{standard saddle conection} of the horizontal strip.
\end{proposicion}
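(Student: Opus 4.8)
The plan is to verify the three assertions in turn, relying on Theorem~\ref{teo:descomposicion} and on the normalization carried out in the remarks immediately preceding the statement. I begin by showing that the decomposition consists only of horizontal strips. By Theorem~\ref{teo:descomposicion} every component of $X\setminus C$ is a horizontal strip, a ring domain, a half-plane, or a spiral domain, so it suffices to exclude the latter three. A half-plane forces its trajectories to converge to a pole of order greater than two, which the hypothesis forbids. Spiral domains and ring domains with a finite boundary have boundaries made of saddle trajectories, of which there are none; and a ring domain whose two ends both limit on double poles, having no saddle trajectories on its boundary, would exhaust $X$ up to finitely many poles and so leave no room for the finite critical point that the GMN hypothesis guarantees. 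Only horizontal strips remain. To count finite critical points I traverse one boundary line of a strip: it runs between the two double poles at the ends of the strip, so any two finite critical points on it would be joined by a boundary segment that is a single trajectory connecting two finite critical points, i.e.\ a saddle trajectory, which is excluded. Reading the boundary line as (double pole, separating trajectory, finite critical point, separating trajectory, double pole) then shows it carries exactly one finite critical point, so the strip has at most two on its boundary---exactly two unless the two coincide as points of $X$. This exact count is what the normalization below requires.

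For the second assertion I invoke the normalization already described: each strip is isometric to the interior of some $F_{2h}$, the isometry extends continuously to the boundary, and composing with a horizontal translation (an automorphism of $F_{2h}$) places the preimages of the two boundary finite critical points at a symmetric pair $\pm\tfrac12 w$, exhibiting the strip as isomorphic to $F_w$. Uniqueness is exactly the automorphism computation made before the statement: the only automorphisms of $F_{2h}$ preserving the marked-point set are the identity and $z\mapsto -z$, the latter swapping $\pm\tfrac12 w$ and hence sending $w$ to $-w$. Thus the period is determined up to sign, which is precisely the recorded criterion $F_{w_1}\cong F_{w_2}\iff w_1=\pm w_2$, and the strip is isomorphic to exactly one $F_w$.

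For the third assertion I track the standard saddle class under this isomorphism. In $F_w$ it is the straight segment from $-\tfrac12 w$ to $\tfrac12 w$, whose direction is $\arg(w)$; since every normal chart carries straight segments to trajectories, its image is a trajectory of phase $\arg(w)$ joining the two boundary finite critical points, i.e.\ a saddle connection contained in the strip. For uniqueness I use that $F_w$ is convex and simply connected and that in a normal chart every trajectory is a straight line: a saddle connection lying in the strip and joining the two marked points must be the unique straight segment between them. Therefore the standard saddle class maps to the unique saddle connection of $\varphi$ inside the strip that joins the two boundary finite critical points.

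The main obstacle is the first step: excluding ring domains rigorously, i.e.\ making precise that a component swept by closed trajectories must either acquire a saddle trajectory on its finite boundary or else sweep out the whole, zero-free, surface; and confirming that each boundary line of a strip carries exactly one finite critical point, since it is this fact that licenses the symmetric normalization underlying the second and third steps.
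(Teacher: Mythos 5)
Your proof is correct and follows essentially the same route as the paper, which presents this proposition as an immediate consequence of the preceding remarks: the exclusion of ring domains, half-planes and spiral domains via Theorem \ref{teo:descomposicion} together with the GMN and no-saddle hypotheses, the symmetric normalization of the marked points, and the computation that the only automorphisms of \(F_{2h}\) preserving the marked points are the identity and \(z\mapsto -z\). The only difference is one of care: you actually prove (via the saddle-trajectory argument on each boundary line) that every boundary component carries exactly one finite critical point, whereas the paper simply assumes this when setting up the normalization.
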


Finally we want to remark that since the isomorphism from a horizontal strip
to a standard strip in general does not extend to the boundary,
there are different possible topologies for the closure of the strip.
It is not hard to prove that there are exactly two possibilities.
In the figures \ref{fig:franja} and \ref{fig:franja-doblada} we show both
possibilities and the corresponding saddle connections.

\begin{figure}
	\begin{subfigure}[t]{0.45\textwidth}
\hspace{-1cm}
		\includegraphics{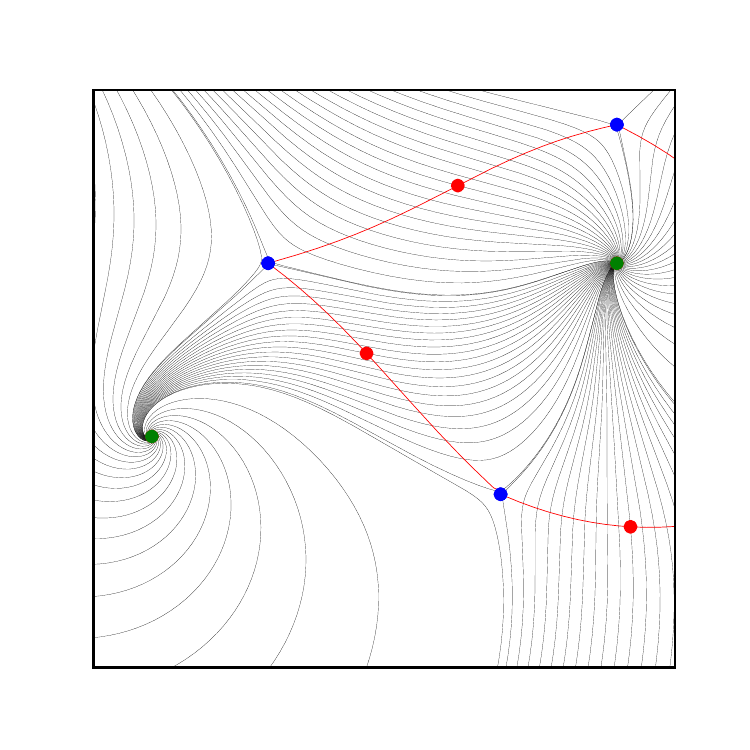}
\caption{Horizontal strip with two finite critical points in the boundary (in blue).
	Some saddle connections are also shown.
}\label{fig:franja}
\end{subfigure}
\hspace{1.0cm}
\begin{subfigure}[t]{0.45\textwidth}
\hspace{-1.5cm}
	\includegraphics{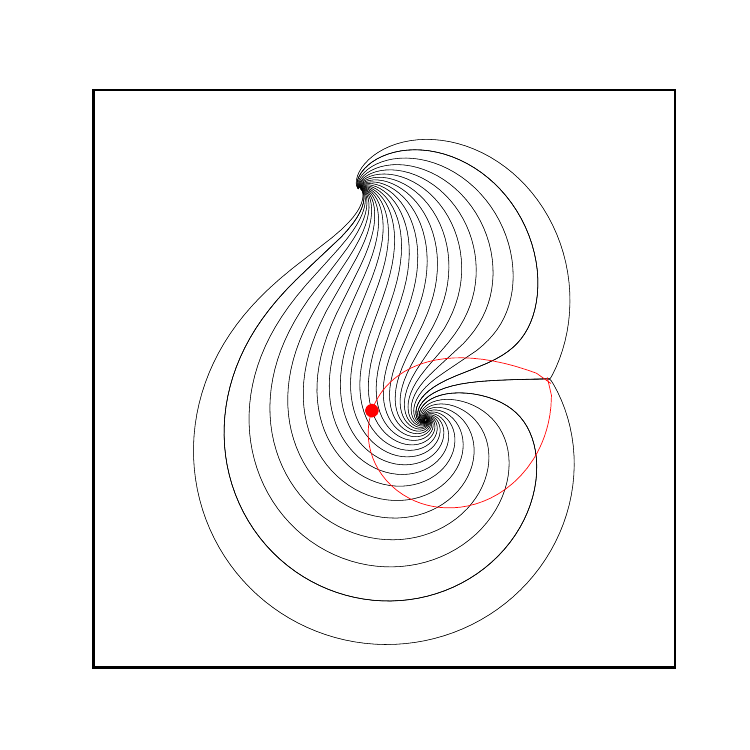}
\caption{Horizontal strip with only one finite critical point in the boundary.
	In this case the saddle connection is a loop.
}\label{fig:franja-doblada}
\end{subfigure}
\end{figure}
\subsection{Periods of GMN differentials}

In this section we will use the standard saddle connections
to give a description of the hat homology group of a quadratic differential.

For the rest of the section, \( X\) will be a fixed Riemann surface and 
\( \varphi\) a fixed GMN differential on \( X\). We also require that
\( \varphi\) does not have saddle trajectories and that all the poles have order
at most two.
As we have already said, this implies that 
\( X\) decomposes as a union of horizontal strips, each of which has at
most two finite critical points in the boundary.

Let \( F\) be any horizontal strip of period  \( w \) of \( X\).
Let \( \alpha\)  be the saddle connection of  \( F\).
Choose any orientation of  \( \alpha\), consider  \( \alpha\) with that orientation as a  \( 1-\)chain on \( X\). Let \( \alpha_1\) and \( \alpha_2\) be the two possible liftings of  \( \alpha\) to the spectral cover of  \( X\). 
Then it is clear that \( \alpha_1-\alpha_2\) and \( \alpha_2-\alpha_1\) are \( 1-\)cycles
that define anti-invariant homology classes, \ie they are elements of 
 \(\widehat\H(\varphi) \),
so they have well defined periods.

To compute this periods, we can integrate \( \psi\) along every summand of the cycle,
and this integral in turn can be done in the standard horizontal strip of \( \mathbb{C}\):
\[ \widehat{\varphi}(\pm(\alpha_1-\alpha_2))=\pm\frac12\left(\int_{\alpha_1}\psi-\int_{ \alpha_2 }\psi\right)=\pm\left( \int_{-\frac12 w}^{\frac12 w}dz  \right)=\pm w\]
where we have used that \( \int_{\alpha_2}-\psi=\int_{\alpha_2}\tau^*(\psi)=\int_{\tau_*(\alpha_2)}\psi=\int_{\alpha_1}\psi\).

Let \( \alpha_F\) be the unique hat homology class given by the cycle \( \pm(\alpha_1-\alpha_2)\) and whose period has positive imaginary part.
We shall call \( \alpha_F\) the \emph{standard saddle class} of the strip \( F\).
Hence, the standard saddle class of a strip of period \( w\) has period equal to \( w\). This is the reason why we choose that name in the first place.

The standard saddle classes are important because they are a basis for
the hat homology group of the differential.
First, we prove that they are linearly independent:

\begin{proposicion}
	Let \( F_1, F_2,\ldots,F_n\) be the horizontal strips of the differential  \( \varphi\) and
	let \( \alpha_i:=\alpha_{F_i}\) be the corresponding standard saddle classes.
	Then the \( \alpha_i\) are linearly independent.
\end{proposicion}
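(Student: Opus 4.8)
The plan is to use the non-degenerate relative intersection pairing
\[ \langle\,,\,\rangle\colon \H_1(\widehat X, \CritInf)\times \H_1(\widehat X^\circ)\to \mathbb{Z} \]
constructed at the end of Section \ref{cap:espectral-hom} and computed as an oriented intersection number, and to exhibit for the classes $\alpha_i\in\widehat{\H}(\varphi)\subseteq\H_1(\widehat X^\circ)$ a dual family $\beta_1,\dots,\beta_n\in\H_1(\widehat X,\CritInf)$ whose pairing matrix is invertible. Indeed, once such $\beta_j$ are found, pairing a putative relation $\sum_i c_i\alpha_i=0$ against each $\beta_j$ yields $\sum_i c_i\langle\beta_j,\alpha_i\rangle=0$, and invertibility of $(\langle\beta_j,\alpha_i\rangle)$ forces every $c_i=0$.

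The dual classes will be built from the arcs of the WKB triangulation. For each strip $F_i$ I would choose a generic trajectory $\gamma_i\subseteq\interior(F_i)$; by Theorem \ref{teo:descomposicion} both of its ends converge to infinite critical points of $\varphi$, so $\gamma_i$ stays away from the branch points (the simple zeros) and its closure is a path between poles. Its preimage under $\pi$ therefore splits into two disjoint lifts $\gamma_{i,1},\gamma_{i,2}$, exchanged by $\tau$, each of which is a relative $1$-cycle in $\H_1(\widehat X,\CritInf)$ since its endpoints lie in $\CritInf(\omega)$. I set $\beta_i:=[\gamma_{i,1}-\gamma_{i,2}]$.

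The key geometric input is the intersection count \emph{downstairs}. The saddle connection representing $\alpha_i$ lies in $\overline{F_i}$ with its endpoints the finite critical points on $\partial F_i$, whereas $\gamma_j$ lies in $\interior(F_j)$ and never reaches the boundary. Since distinct strips have disjoint interiors (Theorem \ref{teo:descomposicion}), the saddle connection of $F_i$ and the trajectory $\gamma_j$ are disjoint for $i\neq j$. For $i=j$, working in the standard model the saddle class is the straight segment from $-\tfrac12 w_i$ to $\tfrac12 w_i$, which crosses each horizontal trajectory $\gamma_i=\{\im z=c\}$ transversally in exactly one point. Hence on $X$ the intersection number of $\gamma_j$ with the saddle connection of $F_i$ is $\pm\delta_{ij}$.

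Finally I would transport this to the cover. Because $\tau$ is holomorphic, hence orientation-preserving, intersection numbers are $\tau$-invariant, so $\langle\gamma_{j,1},\alpha_{i,1}\rangle=\langle\gamma_{j,2},\alpha_{i,2}\rangle$ and $\langle\gamma_{j,1},\alpha_{i,2}\rangle=\langle\gamma_{j,2},\alpha_{i,1}\rangle$; writing $\alpha_i=[\alpha_{i,1}-\alpha_{i,2}]$ this gives
\[ \langle\beta_j,\alpha_i\rangle=2\bigl(\langle\gamma_{j,1},\alpha_{i,1}\rangle-\langle\gamma_{j,1},\alpha_{i,2}\rangle\bigr), \]
which vanishes for $i\neq j$. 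For $i=j$ the single downstairs crossing has two preimages $\widehat p,\tau\widehat p$; labelling lifts so that $\gamma_{i,1}$ meets $\alpha_{i,1}$ at $\widehat p$, applying $\tau$ shows $\gamma_{i,2}$ meets $\alpha_{i,2}$ at $\tau\widehat p$ and that $\gamma_{i,1}$ never meets $\alpha_{i,2}$, so $\langle\beta_i,\alpha_i\rangle=\pm2\neq0$. Thus $(\langle\beta_j,\alpha_i\rangle)$ is diagonal with nonzero diagonal, hence invertible, and linear independence follows. The hard part will be the bookkeeping of the two sheets and of orientations on the cover — in particular verifying that the unique downstairs crossing distributes over the two sheets so that the diagonal entry does not accidentally cancel, and checking the folded strip of Figure \ref{fig:franja-doblada}, where the saddle connection is a loop based at a single zero but the same single transverse crossing with $\gamma_i$ persists.
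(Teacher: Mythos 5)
Your proposal is correct and follows essentially the same route as the paper's own proof: pairing the saddle classes $\alpha_i$ against the relative classes obtained as differences of the two lifts of a generic trajectory of each strip, using the pairing $\H_1(\widehat X,\CritInf)\times\H_1(\widehat X^\circ)\to\mathbb{Z}$. The only difference is that the paper asserts $\langle\gamma_j,\alpha_i\rangle\neq0$ iff $i=j$ as ``clear,'' whereas you carry out the sheet-by-sheet bookkeeping explicitly and arrive at the diagonal value $\pm2$ — a welcome amplification, not a new argument.
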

\begin{proof}
Let \( \gamma_i\) be any generic trajectory of the \( i-\)th horizontal strip.
Considering any orientation of \( \gamma_i\) and taking de difference
of the two liftings to the spectral cover \( \widehat X\) we can define a
class \( [\gamma_i]\) in the relative homology group \( \H_1(\widehat X, \CritInf)\).
Using the bilinear pairing discussed in the previous section, it is clear that
\( \langle \gamma_i, \alpha_j\rangle\) is not zero if and only if \( i=j\).
So we can conclude that the classes \( \alpha_i\) are linearly independent.

\end{proof}

To see that the \( \alpha_i\) are a basis, it is enough to prove that there
is the right number of them.
Recall that by proposition
 \ref{prop:dimension-h-gorro} the dimension of  \( \widehat{H}(\varphi)\)
is exactly \( N(\varphi)=6g -6 + \sum_{i=0}^m m_i+m\).

The interested reader can find the proof that there are exactly \( N(\varphi)\)
horizontal strips in \cite[Lemma 3.2]{bridgeland2015}.

Putting this together with the last proposition, we get:
\begin{corolario} Let \( \varphi\) be a GMN quadratic
	differential without poles of order greater than  \( 2\) and without saddle trajectories.
	Then, the standard saddle classes \( \alpha_i\) are a basis for
	the hat homology group \( \widehat\H(\varphi)\).
\end{corolario}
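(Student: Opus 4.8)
The plan is to obtain the corollary by combining three results already available: the linear independence of the standard saddle classes $\alpha_1,\dots,\alpha_n$ proved in the preceding proposition; the rank computation $\rango\widehat{\H}(\varphi)=N(\varphi)$ of Proposition~\ref{prop:dimension-h-gorro}; and the fact, recalled just above from \cite[Lemma~3.2]{bridgeland2015}, that the number $n$ of horizontal strips equals $N(\varphi)$. Since $\widehat{\H}(\varphi)$ sits inside $\H_1(\widehat X^\circ)$, and the first homology of an open surface is free abelian, $\widehat{\H}(\varphi)$ is itself free of rank $N(\varphi)$. First I would simply record that there are exactly $N(\varphi)$ classes $\alpha_i$ and that they are linearly independent; over $\mathbb{Q}$ this already settles matters, because an independent family whose size equals the dimension is a basis of $\widehat{\H}(\varphi)\otimes\mathbb{Q}$. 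At the level at which the surrounding text treats $\widehat{\H}(\varphi)$ as a group of a prescribed dimension, this counting argument is essentially the whole content.

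The hard part will be upgrading this $\mathbb{Q}$-basis to a genuine $\mathbb{Z}$-basis, since a priori the $\alpha_i$ could generate only a proper finite-index sublattice of $\widehat{\H}(\varphi)$. To rule this out I would sharpen the intersection computation used in the linear independence proof. There, for each strip $F_i$ one takes a generic trajectory and forms the relative class $[\gamma_i]\in\H_1(\widehat X,\CritInf)$ as the difference of its two liftings, observing that the pairing $\langle\gamma_i,\alpha_j\rangle$ vanishes unless $i=j$. The refinement I would carry out is to show that in fact $\langle\gamma_i,\alpha_j\rangle=\pm\delta_{ij}$: a generic trajectory of $F_i$ meets the standard saddle connection of $F_j$ transversally in a single point when $i=j$ and not at all otherwise, so every nonzero entry is a single signed crossing. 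Consequently the matrix of $\langle\ ,\ \rangle$ on the families $\{\gamma_i\}$ and $\{\alpha_j\}$ is diagonal with entries $\pm1$, hence unimodular. This transversality bookkeeping, rather than anything homological, is the real (if modest) obstacle.

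Granting this, the conclusion follows by a standard duality argument. For any $\beta\in\widehat{\H}(\varphi)$ the integers $\langle\gamma_i,\beta\rangle$ are defined, and the combination $\beta-\sum_i\langle\gamma_i,\alpha_i\rangle\langle\gamma_i,\beta\rangle\,\alpha_i$ pairs to zero against every $\gamma_i$; writing this element in the $\mathbb{Q}$-basis $\{\alpha_j\}$ and pairing with each $\gamma_i$ forces all of its coefficients to vanish, since the pairing matrix is the identity up to signs. Hence $\beta$ lies in the integral span of the $\alpha_i$, so the $\alpha_i$ generate $\widehat{\H}(\varphi)$ over $\mathbb{Z}$; together with their independence this makes them a $\mathbb{Z}$-basis, as claimed. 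The non-degeneracy of the Lefschetz--universal-coefficients pairing introduced at the end of Section~\ref{cap:espectral-hom} is what guarantees these integer pairings behave well, but the decisive input is the explicit unimodular intersection matrix.
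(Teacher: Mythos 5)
Your first paragraph is, in substance, the paper's entire proof: linear independence from the preceding proposition, the rank formula of Proposition \ref{prop:dimension-h-gorro}, and the count of horizontal strips quoted from \cite[Lemma 3.2]{bridgeland2015}. The paper stops there, implicitly treating an independent family of maximal size as a basis; your observation that this only produces a basis of $\widehat{\H}(\varphi)\otimes\mathbb{Q}$, and that integral generation requires a further argument, is a legitimate point which the paper glosses over.

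However, the refinement you propose fails at exactly the step you call the decisive input. With $[\gamma_i]$ defined, as in the paper, as the \emph{difference of the two liftings} of a generic trajectory, one has $\langle\gamma_i,\alpha_i\rangle=\pm2$, not $\pm1$. Indeed, write $\gamma_i^{(1)},\gamma_i^{(2)}$ and $\alpha_i^{(1)},\alpha_i^{(2)}$ for the two lifts of the generic trajectory and of the standard saddle connection of $F_i$. Since the interior of the strip contains no branch point, its preimage consists of two disjoint sheets, so $\gamma_i^{(k)}$ crosses $\alpha_i^{(k)}$ exactly once and is disjoint from $\alpha_i^{(3-k)}$. The involution $\tau$ is holomorphic, hence orientation-preserving, so the two crossings carry the \emph{same} sign $\epsilon$, and
\[
\bigl\langle \gamma_i^{(1)}-\gamma_i^{(2)},\ \alpha_i^{(1)}-\alpha_i^{(2)}\bigr\rangle \;=\; \epsilon-0-0+\epsilon \;=\; 2\epsilon \;=\;\pm2 .
\]
Thus your pairing matrix is $\mathrm{diag}(\pm2)$, of determinant $\pm2^N$, and the duality argument in your last paragraph only yields coefficients $c_i\in\tfrac12\mathbb{Z}$, i.e.\ that the $\alpha_i$ span a sublattice of index dividing $2^N$ in $\widehat{\H}(\varphi)$ --- which is precisely the possibility you set out to exclude. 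The repair is straightforward: pair instead against a \emph{single} lift $\gamma_i^{(1)}$, which is a perfectly good relative class in $\H_1(\widehat X,\CritInf)$ since its ends lie over the double poles. It meets $\alpha_i$ in one transversal point and is disjoint from $\alpha_j$ for $j\neq i$, so $\langle\gamma_i^{(1)},\alpha_j\rangle=\pm\delta_{ij}$; this matrix is genuinely unimodular, and your concluding duality argument then does close the integral gap left open by the paper.
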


\section{Spaces of quadratic differentials}
\label{sec:espacios}

In this section we shall use the standard saddle classes
and the periods of these to define natural parametrizations
of families of quadratic differentials.

\subsection{Horizontal strip decomposition}

The purpose of this section is to describe certain families
of quadratic differentials for which we will later give a parametrization.
In what follows we will only consider GMN differentials with poles of order
at most two, and since we will deal with several distinct differentials
at the same time, we employ the notation \( (X,\varphi)\) for
refering to a quadratic differentials \(\varphi\) on a Riemann surface  \( X\).

\begin{definicion}
We shall say that two quadratic differentials \( (X,\varphi)\) and \( (Y,\vartheta)\)
have the same \emph{(strip) decomposition type} when there is
a diffeomorphism  \( f:X \rightarrow Y\)
that takes poles to poles, zeros to zeros, and separating trajectories into
separating trajectories.
Any such diffeomorphism necessarily establishes a bijection between
the horizontal strips of \(\varphi\) and the strips of \(\vartheta\).
A diffeomorphism with the aformentioned properties will
be called a \emph{strip equivalence}.
\end{definicion}

We are interested in describing 
the set of differentials with the same decomposition type as a given fixed differential.
Besides, one of the main goals is to endow said set with a topology,
and if possible, that this topology is compatible with other
geometric structures, for example the structure of a complex manifold.

However, the problem with this set is that
the more natural attempts to define a topology produce
a singular space \ie instead of defining a manifold, they define
the structure of a complex orbifold.

This problem arises from the fact that given two quadratic differential
with the same decomposition type, in general there is no natural
choice for the strip equivalence.
Equivalently, the problem is that in general there are quadratic differentials
with non-trivial automorphism group.

There are several equivalent ways to solve this problem.
One of those is to consider quadratic differentials with some extra structure:

\begin{definicion}\label{def:marco-homologico}
Let \( (X,\varphi)\) be a quadratic differential.
A \( \Gamma-\)framing \( \theta \) for the differential
is a group isomorphism
\[ \theta:\Gamma\rightarrow \widehat{\H}(\varphi)\]
where clearly \( \Gamma \cong\mathbb{Z}^{N(\varphi)}\).
Let \( (X_1,\varphi_1)\) and \( (X_2,\varphi_2)\) be two differentials with
framings \( \theta_1\) and \( \theta_2\)
and let\( f:X_1\rightarrow X_2\) be a complex isomorphism such that \( f^*(\varphi_2)=\varphi_1\).
We say that \( f\) preserves the framings
when the distinguished lifting
\( f:\widehat X_{\varphi_1} \rightarrow \widehat X_{\varphi_2}\)
that preserves the tautological
\( 1-\)forms makes the following diagram commute:
\begin{center}
\begin{tikzpicture}
    \node (A) at (0,0) {$\widehat \H (\varphi_1)$};
    \node (B) at (3,0) {$\widehat \H (\varphi_2)$};
\node (C) at (1.5,2) {$\Gamma$};
    \draw[transform canvas={yshift=0.3ex},->] (C) -- node[above] {$ \theta_1 $} (A);
    \draw[->] (C) -- node[auto] {$  \theta_2  $} (B);
    \draw[->] (A) -- node[below] {$  \widehat f_*  $} (B);
\end{tikzpicture} 
\end{center}
\end{definicion}

The notion of decomposition type can be extended to the context of
framed differentials, and with this concept
it is possible to prove that the set of
framed differentials with a given decomposition type is a
complex manifold which in turn is a covering
of the orbifold of (non-framed) quadratic differentials.

This approach is fully developed in  \cite{bridgeland2015}.
In this text, we shall use a different method, which is totally equivalent,
but is better suited to our specific needs.
In the next section we shall prove the equivalence of both methods.

In what follows, fix a quadratic differential  \( (X_0,\varphi_0)\).

\begin{definicion}
A \emph{\( \varphi_0-\)framed} differential is a quadratic differential  \( (X,\varphi)\) together with a strip equivalence 
\( f:X_0\rightarrow X\).
\end{definicion}

As in the previous definition, we need a way to determine when
a transformation preserves the \( \varphi_0-\)framings.

\begin{definicion}
	Let \( (X_1, \varphi_1, f_1)\) and \( (X_2, \varphi_2, f_2)\)
	be two \( \varphi_0-\)framed differentials.
	Let {\(g:X_1 \rightarrow X_2\)} be a strip equivalence.
	We shall say that \( g\) is a \emph{framed} strip equivalence
	when the transformation  \( f_2^{-1}\circ g\circ f_1\)
	(which preserves the strip decomposition of \( \varphi_0\))
	fixes the strips, \ie the image of every strip is itself.

	Whenever \( g:(X_1, \varphi_1):\rightarrow (X_2,\varphi_2)\)
	is an isomorphism of quadratic differentials and is also
	a framed strip equivalence, we shall say that it is a
	(framed) isomorphism between \( (X_1, \varphi_1, f_1)\) and \( (X_2, \varphi_2, f_2)\).
\end{definicion}

Let us remark that this way to tackle the problem
of defining the space of quadratic differentials is
entirely analogous to one of the possible constructions of
the Teichm\"uller space of genus \( g\) Riemann surfaces.
See {\cite[Sect.~6.4]{hubbard2006teichmuller}}.

Write \( \Cuad(X_0, \varphi_0)\) for the set of isomorphism classes
of \( \varphi_0-\)framed differentials:

\[ \Cuad(X_0, \varphi_0):= \left\{\vphantom{\frac12} (X,\varphi,f)\, \middle| \, (X,\varphi,f)\mbox{ is a }\varphi_0\mbox{-framed differential}  \right\}/ \cong\]

Observe that \( (X_0,\varphi_0, \Id)\) is a distinguished point in \( \Cuad(X_0, \varphi_0)\).

At the end of this section we shall prove that there is a natural
bijection from
\( \Cuad(X_0, \varphi_0)\) to the set \( \mathbb{H}_+^{N(\varphi)}\) 
given by the \emph{period mapping},
where \( \mathbb H_+ =\{z\in\mathbb{C} | \im(z)>0\}\) is the open upper half-plane.

\subsection{Period coordinates on a cell}

Before being able to define the period mapping,
we need a construction related to the framed strip equivalences.

Let \( (X_1, \varphi_1, f_1)\) and \( (X_2, \varphi_2, f_2)\) be
two \( \varphi_0-\)framed differentials
and let \(g:X_1 \rightarrow X_2 \) be a framed strip equivalence.
Since  \( g\) preserves the ramification points of the corresponding
spectral covers, \( g\) can be lifted to
the coverings.

Since the spectral covers are two-sheeted,
there are exaclty two ways (\apriori indistinguishable) to lift the map \( g\).

Let \( \hat g\) be any of the liftings and let \( F\)
be a horizontal strip of \( X_1\).
Let \( \gamma\) be the saddle connection of  \( F\).
Since the interior of the strip is contractible,
and \( g\) preservs the strips,
we can conclude that \( g(\gamma)\) is homotopic to
the saddle connection of \( g(F)\).
This implies that the lifting \( \hat g\) satisfies \( \hat g_*(\alpha_f)=\pm \alpha_{g(F)}\).

\begin{proposicion}\label{prop:levantamiento-distinguido}
	Let \( F_1,\ldots,F_N\) be the horizontal strips of \( (X_1,\varphi_1)\) 
	and \( \alpha_1,\ldots,\alpha_N\) be the corresponding standard saddle classes.
	One of the liftings of \( g\), which we shall call \( \hat g_+\)
	satisfies:
	\[ \hat g_+ (\alpha_i)= +\alpha_{g(F_i)}\]
	for every standard saddle class, while the other possible lifting
	which we will call
	 \( \hat g_-\) satisfies: 
	\[ \hat g_- (\alpha_i)= -\alpha_{g(F_i)}.\]

	We will also call \( \hat g_+\) the \emph{distinguished lifting} of \( g\).
	For brevity we write \( \hat g:= \hat g_+\).

\end{proposicion}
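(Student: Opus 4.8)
The plan is to first pin down how the two lifts of $g$ are related on hat homology, and then to prove that for a single fixed lift the sign is the same for every strip; the all-plus lift is then named $\hat g_+$.

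First I would record that the two lifts of $g$ are exactly $\hat g$ and $\tau_2\circ\hat g$, and that $\tau_2\circ\hat g=\hat g\circ\tau_1$. Indeed $\hat g\circ\tau_1$ again covers $g$ since $\pi_1\circ\tau_1=\pi_1$, and as $\hat g$ is a homeomorphism it maps a generic two-point fibre bijectively onto a two-point fibre, so it cannot be constant there; hence it must intertwine the two deck involutions. Because $\widehat\H(\varphi_2)$ is by definition the $\tau$-anti-invariant part of $\H_1(\widehat X^\circ)$, the map $(\tau_2)_*$ acts as $-\Id$ on it, and each class $\alpha_{g(F_i)}$ lies in that group. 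Consequently, if one lift satisfies $\hat g_*(\alpha_i)=\epsilon_i\,\alpha_{g(F_i)}$ with $\epsilon_i=\pm1$, the other satisfies $(\tau_2\circ\hat g)_*(\alpha_i)=-\epsilon_i\,\alpha_{g(F_i)}$. Thus the two sign vectors are negatives of one another, and it suffices to show that, for a fixed lift, all the $\epsilon_i$ agree; I then call the lift producing only $+1$'s the distinguished lift $\hat g_+$.

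Next I would reduce each $\epsilon_i$ to a sheet-matching datum. Writing $\alpha_i=(\Id-\tau_1)_*\tilde\alpha_i$, where $\tilde\alpha_i$ is the lift of the oriented saddle connection of $F_i$ normalized so that its period has positive imaginary part, I use that $g$ carries the contractible strip $F_i$ onto $g(F_i)$ fixing the critical endpoints, so the image of the saddle connection is homotopic rel endpoints, inside the strip (which contains no branch points), to the saddle connection of $g(F_i)$. This homotopy lifts, so $\hat g_*\tilde\alpha_i$ is homologous to a definite lift of the saddle connection of $g(F_i)$, and using $\hat g\circ\tau_1=\tau_2\circ\hat g$ one gets $\hat g_*\alpha_i=(\Id-\tau_2)_*(\hat g_*\tilde\alpha_i)=\epsilon_i\,\alpha_{g(F_i)}$, with $\epsilon_i=+1$ precisely when $\hat g$ sends the distinguished sheet over $F_i$ to the distinguished sheet over $g(F_i)$. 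Thus the problem becomes the global consistency of this sheet-matching sign.

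Finally, the crux is to show this sign is independent of $i$, and here I would argue by connectivity. Since $\varphi_1$ has no saddle trajectories and only double poles, the closed strips cover the connected surface $X_1$ and meet only along separating trajectories, each of which terminates at a simple zero; hence the graph on the set of strips, with an edge whenever two strips share a zero on their boundary, is connected. It is therefore enough to prove $\epsilon_i=\epsilon_j$ when the closures of $F_i$ and $F_j$ meet at a common zero $p$. Above $p$ there is a single branch point $\hat p$, near which $\pi$ has local model $z\mapsto z^2$ and $\hat g$ is a local homeomorphism intertwining $\tau_1$ and $\tau_2$; the distinguished-sheet conventions for the two saddle connections running into $F_i$ and into $F_j$ are both read off from the same local orientation of $\omega_1$ around $\hat p$, so the single sheet-matching that $\hat g$ induces there forces the two signs to coincide. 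Propagating along the connected adjacency graph yields $\epsilon_i\equiv\epsilon$, completing the argument. The main obstacle is exactly this last step: one must check that the per-strip normalization ``period has positive imaginary part'' is compatible around $\hat p$—that the distinguished sheets of the two strips adjacent at $p$ are governed by one and the same local $V_{\omega_1}$-orientation through the branched model $z\mapsto z^2$—so that a single local sheet-matching for $\hat g$ genuinely gives equal signs.
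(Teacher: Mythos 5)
Your overall architecture is sound, and it goes beyond what the paper itself records: the paper only establishes the per-strip statement $\hat g_*(\alpha_F)=\pm\alpha_{g(F)}$ (via contractibility of the strip, exactly as in your second step) and then states the proposition without proving the coherence of the signs. Your first step (the two liftings are $\hat g$ and $\tau_2\circ\hat g=\hat g\circ\tau_1$, and since $(\tau_2)_*=-\mathrm{Id}$ on the anti-invariant part, the two sign vectors are opposite) is correct, as is the reduction of the whole proposition to the claim that, for one fixed lifting, all the signs $\epsilon_i$ coincide; checking this on strips adjacent at a zero and propagating along the connected adjacency graph is the right strategy.

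The problem is that your third step — the only place where the actual content of the proposition lives — is asserted rather than proved, as you yourself concede in your final sentence. And it does not follow formally from what you say: $\hat g$ covers a strip equivalence, which does \emph{not} preserve the horizontal foliations, so $\hat g$ does not relate $V_{\omega_1}$ to $V_{\omega_2}$, and ``the single sheet-matching at $\hat p$'' by itself forces nothing; one must show that the period-positivity normalizations of the two adjacent strips sit in a compatible combinatorial position around the branch point, on the source \emph{and} on the target. Concretely, the missing verification is: (i) an oriented lift $\tilde\gamma$ of a saddle connection is distinguished (period in $\mathbb{H}_+$) if and only if the frame $(V_{\omega},\dot{\tilde\gamma})$ is positively oriented on the spectral cover — a condition whose sign is constant along $\tilde\gamma$ because a saddle connection is nowhere horizontal; (ii) consequently, in the six sectors around a branch point $\hat p$ (local model $\omega=c\,w^{2}\,dw$, six separatrices alternating incoming/outgoing), the distinguished saddle-connection germs \emph{alternate}, pointing out of $\hat p$ and into $\hat p$ in consecutive sectors; (iii) since $g$ preserves separating trajectories, $\hat g$ maps sectors at $\hat p_1$ to sectors at $\hat p_2$ preserving adjacency and preserving which endpoint of each lifted saddle connection sits at the branch point, so the alternation on both sides forces $\epsilon_i=\epsilon_j$ for the two strips adjacent along a separatrix at $p$ (note this works whether or not $g$ preserves orientation). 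With (i)--(iii) supplied — a short computation in the local model — your connectivity argument completes the proof; without them, the proposal stops exactly at the step the proposition is really about.
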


\subsubsection{The period mapping}

From here on, we shall fix not only the differential 
\( (X_0, \varphi_0)\),  but also an ordering of the horizontal strips of  \( \varphi_0\).

Let \( F_1,\ldots,F_N\) be the \( N:=N(\varphi_0)\) horizontal strips of \( \varphi_0\),
and let
\( \alpha_i:= \alpha_{F_i}\) be the \( N\) standard saddle classes
corresponding to the horizontal strips.

\begin{definicion}
	The \emph{period mapping} defined on the set of
	\( \varphi_0-\)framed quadratic differentials 
	is the map:
	\begin{align*}
		Per: \Cuad(X_0,\varphi_0) & \xrightarrow{~~~~~~} \mathbb{H}_+^N\\
	(X,\varphi, f) &\mapsto  (\hat\varphi (\hat f_*\alpha_i))_{i=1}^N
\end{align*}
where \( \hat f :\hat X_0 \rightarrow \hat X\) is the distinguished lifting of the map \(f \)
discussed earlier and
\( \hat \varphi:\hatH (\varphi)\rightarrow \mathbb{C}\) 
is the period of  \(\varphi\).%
\footnote{See page \pageref{pag:periodo}.}

Let \( \alpha\) be any standard saddle class. Recall that by definition, 
\( \hat \varphi(\alpha)\in \mathbb{H}_+\)
and since \( \hat f_*(\alpha_i)\) is also a standard saddle class,
we conclude that the image o \( Per\) is contained in \( \mathbb{H}_+^N\).
\end{definicion}

The rest of the section is devoted to the proof of the next theorem:

\begin{teorema}\label{teo:principal}
Let \( \varphi_0\) be a quadratic differentials on a Riemann surface \( X_0\)
with simple zeros, double poles, at least one zero and a pole, and no saddle trajectories.
Choose any ordering of the horizontal strips of \( (X_0,\varphi_0)\).
Then, the resulting period mapping
 \( Per:\Cuad(X_0,\varphi_0)\rightarrow \mathbb{H}_+^N\) is a bijection.
\end{teorema}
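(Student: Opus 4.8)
The plan is to unwind the definition of $Per$ so that each coordinate records the period of a single horizontal strip, and then to establish injectivity and surjectivity by a cut-and-paste argument that rebuilds a $\varphi_0$-framed differential strip by strip. First I would reduce the map to strip periods. The framing $f:X_0\to X$ is itself a framed strip equivalence from $(X_0,\varphi_0,\Id)$ to $(X,\varphi,f)$, so Proposition \ref{prop:levantamiento-distinguido} applies to it: the distinguished lifting $\hat f=\hat f_+$ satisfies $\hat f_*(\alpha_i)=+\alpha_{f(F_i)}$, where $f(F_i)$ is the strip of $\varphi$ matched to the $i$-th strip of $\varphi_0$. Since the standard saddle class of a strip of period $w$ has $\varphi$-period exactly $w$, this gives $Per(X,\varphi,f)_i=\hat\varphi(\alpha_{f(F_i)})=w_i$, the period of the strip $f(F_i)$. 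Thus $Per$ merely lists the periods of the strips of $\varphi$, read off through the framing, and the fact that each entry lies in $\mathbb H_+$ is precisely the defining property of a standard saddle class. By Proposition \ref{prop:dimension-h-gorro} and the basis corollary there are exactly $N$ strips, so the number of coordinates is correct.

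For injectivity, suppose $(X_1,\varphi_1,f_1)$ and $(X_2,\varphi_2,f_2)$ have the same period vector. The framings give a common indexing of the strips of $\varphi_0$, $\varphi_1$ and $\varphi_2$, and equality of periods means the $i$-th strip of $\varphi_1$ and the $i$-th strip of $\varphi_2$ share the same period $w_i$, hence are each isomorphic to the model strip $F_{w_i}$. I would glue the resulting per-strip isomorphisms $F_{w_i}\cong F_{w_i}$ into a global biholomorphism $g:X_1\to X_2$. The key point is that $\varphi_1$ and $\varphi_2$ attach their strips along separating trajectories according to the \emph{same} combinatorial pattern, namely the one carried by $\varphi_0$ through $f_1$ and $f_2$; the gluing of two boundary rays is forced once their common zero is matched, so there is no free parameter. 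Hence the per-strip maps agree on shared boundary rays and assemble to a map preserving $dz^2$ on every strip, i.e. an isomorphism with $g^*\varphi_2=\varphi_1$ fixing each strip, which is exactly a framed isomorphism. The delicate bookkeeping is the residual $z\mapsto\pm z$ ambiguity of each per-strip map; I would pin this down using the marked points, which must be matched as prescribed by $f_2\circ f_1^{-1}$, and propagate the sign across the gluing graph, consistency being guaranteed because all choices are restrictions of the single combinatorial identification.

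For surjectivity, given $(w_1,\dots,w_N)\in\mathbb H_+^N$ I would run this construction in reverse. From $\varphi_0$ I extract the combinatorial gluing data: the trivalent graph formed by its zeros and separating trajectories, with the pairing of boundary rays and the cyclic arrangement of strip ends around each double pole. Replacing the $i$-th model strip by $F_{w_i}$ and re-gluing along exactly this data (matching zeros) produces a half-translation surface $X^\circ$ carrying a differential $\varphi$ that is locally $dz^2$, together with a tautological strip equivalence $f:X_0\to X$. I would then complete $X^\circ$ at the poles and check that each cyclic family of strip ends closes up to a single point near which $\varphi$ has the form of Theorem \ref{teo:forma-polos-dobles}, i.e. a double pole; that each trivalent vertex is a cone point of angle $3\pi$, i.e. a simple zero; that $X$ is compact, with genus and polar type dictated by the fixed combinatorics; and that, because every $w_i$ has strictly positive imaginary part, the two marked points of each strip lie at distinct heights and so no horizontal trajectory joins two zeros, giving no saddle trajectories. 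By Theorem \ref{teo:descomposicion} the strip decomposition of $\varphi$ is then exactly the reglued strips, whose periods are the $w_i$, so $Per(X,\varphi,f)=(w_1,\dots,w_N)$.

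The hard part will be this surjectivity construction: making precise the combinatorial data abstracted from $\varphi_0$ and proving that regluing the standard strips yields a genuine compact Riemann surface whose only critical points are simple zeros and double poles. The heart of it is the local analysis at the poles, where the cyclic collection of strip ends must be shown to complete to a single double pole modelled by Theorem \ref{teo:forma-polos-dobles}, so that the identifications along the separating trajectories near a pole close up into a punctured-disc neighbourhood with the correct conformal and metric structure.
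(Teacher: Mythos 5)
Your reduction of $Per$ to strip periods and your injectivity argument are essentially the paper's own: the paper also builds per-strip isomorphisms to the model $F_w$, pins down the $z\mapsto\pm z$ ambiguity (it does so by fixing an oriented curve between the two finite critical points of each strip and demanding it map to the standard oriented segment), and then checks pointwise compatibility along shared separating trajectories by a metric argument — a point of a separating trajectory is determined by its $\varphi$-distance to the zero it emanates from, and that distance is preserved by differential-preserving maps. Your ``the gluing of two boundary rays is forced once their common zero is matched'' is the same idea, just stated loosely.

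The genuine gap is in surjectivity, and it sits exactly where you flag ``the hard part.'' You propose to cut $X_0$ into combinatorial gluing data, replace the $i$-th model strip by $F_{w_i}$, reglue, and then ``complete $X^\circ$ at the poles and check'' that each cyclic family of strip ends closes up to a double pole. But that check is not a routine verification — it is the analytic heart of the theorem, and nothing in your sketch supplies it. After regluing, a neighbourhood of each puncture is an abstract annulus end built from half-infinite strips identified along rays; a priori its conformal type could be an annulus of \emph{finite} modulus, in which case no point can be added to make a Riemann surface at all, let alone one where the differential has a double pole of the form in Theorem \ref{teo:forma-polos-dobles}. One must prove the end has infinite modulus. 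The paper sidesteps your extra burdens (compactness, genus, existence of the strip equivalence $f$) by never changing the underlying surface: it transports the new strip structures onto $X_0$ itself via diffeomorphisms $g(z)=z+(w-v_i)h(\im(z))$ that are honest translations near the strip boundaries, so the new and old structures agree near all separating trajectories and the framing is the identity. The pole problem then becomes Lemma \ref{lema:extension-polos}: the identity map from the old to the new structure has bounded dilation on each strip (it commutes with horizontal translations and the strips have bounded height), hence is quasiconformal by Proposition \ref{prop:pegado-casiconforme}, and Gr\"otzsch's quasi-invariance of the modulus forces the new punctured neighbourhood to have infinite modulus, i.e.\ to be a punctured disc. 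Your construction needs an equivalent argument (and it can be run verbatim on your reglued surface, comparing it quasiconformally with $X_0$); without it, the claim that the reglued object is a compact Riemann surface with double poles is unsupported.
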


\subsection{\( \varphi_0-\)framed and \( \Gamma-\)framed differentials}

As we have previously mentioned, the method we are using
to ``frame'' the quadratic differentials is different
from the one used in  \cite{bridgeland2015}.
In this subsection we explore the relationship between the two methods
and prove that they are equivalent.

Recall that we defined the notion of isomorphism
between \( \Gamma-\)framed differentials (see \ref{def:marco-homologico}),
however, we need the notion of strip equivalence for \( \Gamma-\)framed differentials.

A consequence of proposition \ref{prop:dimension-h-gorro} is that
the rank of the hat homology group of the differential
\( (X,\varphi)\), \ie \( N(\varphi)\), only depends
on the genus of the surface \( X\)  
\emph{polar type} of \( \varphi\), \ie the unordered collection
of integers given by the orders of the poles of  \( \varphi\).
This means that if two quadratic differentials are defined on surfaces
of the same genus and they have the same polar type,
then they have isomorphic hat homology groups.
This statement can be rephrased as follows:
the differentials admit \( \Gamma-\)framings, where \( \Gamma = \mathbb{Z}^{N}\).
Thus, we can give the next definition:

\begin{definicion}
Let \( g\) be a non negative integer and \( m=\{m_1,\ldots,m_d\}\) an unorderd collection of possitive integers.
Let \( \Gamma=\mathbb{Z}^N\) where \( N\) is given by the formula of Proposition  \ref{prop:dimension-h-gorro}
taking \( g\) as the genus and \( m\) as the orders of the poles.
The set of GMN \( \Gamma-\)framed differentials is the set:
\[ \Cuad(g,m)^\Gamma:=\raisebox{-.8em}{\( \bigslant{\left\{(X,\varphi, \theta) \middle\vert \begin{array}{l}
			g(X)=g, \varphi \text{ is GMN and of polar type }m\\
			\text{ y }%
		\theta:\Gamma \rightarrow \hatH(\varphi)\text{ is an isomorphism}\end{array}%
\right\}}{\cong} \)}\]

Also, define  \( \Cuad(g,m)\) as:
\[ \Cuad(g,m):=\raisebox{-.8em}{\(  \bigslant{\left\{(X,\varphi) \middle\vert \begin{array}{l}
			g(X)=g, \varphi \text{ is GMN and of polar type }m\\
       \end{array}%
\right\}}{\cong} \)}\]
\end{definicion}

The group \( \Aut(\Gamma)\) acts on \( \Cuad(g,m)^\Gamma\) by pre-composing
the framing with the automorphism.
It is clear that the quotient is isomorphic to \( \Cuad(g,m)\).
Denote by \( p:\Cuad(g,m)^\Gamma\rightarrow\Cuad(g,m)\) the projection
to the quotient,
which actually is the map that forgets the \( \Gamma-\)framing.

In the subset of those differentials in \( \Cuad(g,m)^\Gamma\)
withoyt saddle trajectories we can define
the notion of (strip) decomposition type: 
We shall say that two \( \Gamma-\)framed differentials 
\( (X_1,\varphi_1,\theta_1)\) and \( (X_2,\varphi_2,\theta_2)\)
have the same decomposition type when there is a diffeomorphism
 \( g:X_1\rightarrow X_2\)  that preserves the strips
 and such that the distinguished lifting \( \hat g\) commutes with the framings.
 Observe that it is the same property defining the isomorphisms of \( \Gamma-\)framed diffeomorphism,
but in this case, the diffeomorphism is not necessarily a complex isomorphism
nor preserves the quadratic differential.
The distinguished lifting exists by Proposition
 \ref{prop:levantamiento-distinguido}.

Let \( U\subseteq \Cuad(g,m)\) be the set of quadratic differentials with
the same decomposition type and let
\( U^\Gamma := p^{-1}(U)\subseteq \Cuad(g,m)^\Gamma\).
Let \( (X,\varphi, \theta)\in U^\Gamma\). Write \( \Cuad(X,\varphi,\theta)^\Gamma\)
for the set of \( \Gamma-\)framed differentials that have the same
decomposition type as \( (X,\varphi,\theta)\).
It is clear that \( \Cuad(X,\varphi,\theta)^\Gamma\) maps onto the set \( U\).
Besides, the images of the sets \( \Cuad(X,\varphi,\theta)^\Gamma\) under the
action of \( \Aut(\Gamma)\) is all of \( U\).

In \cite{bridgeland2015} it is proven that \( \Cuad(g,m)^\Gamma\)
is naturally a complex manifold and \( \Cuad(g,m)\) is a complex orbifold.
To relate this formalism to the notion of \( \varphi-\)framed differentials
we will use the next Proposition:
\begin{proposicion}\label{prop:gamma-phi}
	Let \( (X_0,\varphi_0,\theta_0)\in \Cuad(g,m)^\Gamma\).
	Then there is a bijection between
	\( \Cuad(X_0,\varphi_0,\theta_0)^\Gamma\) and
	\( \Cuad(X_0,\varphi_0)\).
\end{proposicion}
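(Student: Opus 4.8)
The plan is to transport the fixed $\Gamma$-framing $\theta_0$ across a strip equivalence using the distinguished lifting of Proposition \ref{prop:levantamiento-distinguido}. Concretely, I would define
\[
\Phi:\Cuad(X_0,\varphi_0)\to\Cuad(X_0,\varphi_0,\theta_0)^\Gamma,\qquad
\Phi[(X,\varphi,f)]=[(X,\varphi,\hat f_*\circ\theta_0)],
\]
where $\hat f$ is the distinguished lifting of the strip equivalence $f\colon X_0\to X$. The candidate inverse $\Psi$ sends a class $[(X,\varphi,\theta)]$ to $[(X,\varphi,g)]$, where $g\colon X_0\to X$ is any strip equivalence witnessing that $(X,\varphi,\theta)$ and $(X_0,\varphi_0,\theta_0)$ have the same decomposition type; such a $g$ exists by the very definition of $\Cuad(X_0,\varphi_0,\theta_0)^\Gamma$, and it satisfies $\hat g_*\circ\theta_0=\theta$. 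That $\Phi$ lands in the target is immediate with $f$ itself as witness: the decomposition-type condition requires precisely that $\hat f_*\circ\theta_0$ equal the framing we attached, which holds by construction.

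The technical heart is a functoriality statement: if $f\colon X_0\to X_1$ and $g\colon X_1\to X_2$ are strip equivalences, then $\widehat{g\circ f}_*=\hat g_*\circ\hat f_*$ on $\widehat{\H}(\varphi_0)$. I would prove this by evaluating both sides on the basis of standard saddle classes, since by Proposition \ref{prop:levantamiento-distinguido} each distinguished lift carries $\alpha_{F_i}$ positively to the standard saddle class of the image strip, so both sides send $\alpha_{F_i}$ to $\alpha_{(g\circ f)(F_i)}$. An equally important point is reconciling the two conventions for ``distinguished lift'': Definition \ref{def:marco-homologico} uses the lift preserving the tautological $1$-form $\omega$, whereas Proposition \ref{prop:levantamiento-distinguido} uses the sign-normalized $\hat g_+$. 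For a complex isomorphism $h$ with $h^*\varphi_2=\varphi_1$ these agree, because $\hat h^*\omega_2=\omega_1$ forces $\hat\varphi_2(\hat h_*\alpha)=\hat\varphi_1(\alpha)$, so $\hat h_*$ preserves periods; since a standard saddle class is singled out among $\pm\alpha$ by having period in $\mathbb{H}_+$, the period-preserving lift is exactly $\hat h_+$.

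With these facts, well-definedness of $\Phi$ follows: if $g\colon X_1\to X_2$ is a framed isomorphism of $\varphi_0$-framed differentials, the framed strip equivalence condition gives $g(f_1(F_i))=f_2(F_i)$, so functoriality yields $\hat g_*\circ\hat{f_1}_*=\widehat{g\circ f_1}_*=\hat{f_2}_*$, making $g$ a $\Gamma$-framed isomorphism $(X_1,\varphi_1,\hat{f_1}_*\circ\theta_0)\to(X_2,\varphi_2,\hat{f_2}_*\circ\theta_0)$. For $\Psi$, independence of the choice of $g$ uses the \emph{linear independence} of the standard saddle classes: if $\hat g_*\circ\theta_0=\hat{g'}_*\circ\theta_0$ then $\alpha_{g(F_i)}=\alpha_{g'(F_i)}$, and since distinct strips have distinct (independent) saddle classes this forces $g(F_i)=g'(F_i)$, so $(g')^{-1}\circ g$ fixes the strips and $\Id_X$ is a framed isomorphism $(X,\varphi,g)\cong(X,\varphi,g')$; independence of the $\Gamma$-isomorphism representative is handled by composing with the connecting isomorphism $h$ via the same functoriality. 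Finally, $\Psi\circ\Phi=\Id$ (take $g=f$) and $\Phi\circ\Psi=\Id$ (unwind $\hat g_*\circ\theta_0=\theta$), so $\Phi$ is a bijection.

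I expect the main obstacle to be the careful bookkeeping around the two distinguished-lifting conventions and the sign normalization: one must verify that the period-preserving lift attached to a genuine complex isomorphism coincides with the combinatorially defined $\hat g_+$, and that these lifts compose, before any of the diagram-chasing becomes meaningful.
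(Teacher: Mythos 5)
Your proposal is correct, and its skeleton is the same as the paper's: both directions are given by the same transport maps, $[(X,\varphi,f)]\mapsto[(X,\varphi,\hat f_*\circ\theta_0)]$ and, conversely, picking a witnessing strip equivalence $g$ with $\hat g_*\circ\theta_0=\theta$. The genuine difference is in the key well-definedness step for the inverse. The paper, after deriving $f(F_i)=g(F_i)$ from the framing condition exactly as you do, concludes that the two $\varphi_0$-framed differentials have equal period vectors and then \emph{appeals to the injectivity part of Theorem \ref{teo:principal}}; you instead observe that once $g(F_i)=g'(F_i)$ holds for every strip, the identity map $\Id_X$ is itself a framed isomorphism $(X,\varphi,g)\cong(X,\varphi,g')$, since $(g')^{-1}\circ g$ fixes the strips. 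Your route is more elementary and, importantly, self-contained: the paper's argument creates a forward dependence on Theorem \ref{teo:principal}, whose proof only appears later in the gluing section (there is no circularity, but the proposition then cannot be quoted independently of the main theorem, whereas yours can). You also fill in two points the paper dismisses with ``it can be shown'': the functoriality $\widehat{g\circ f}_*=\hat g_*\circ\hat f_*$, checked on the basis of standard saddle classes, and the reconciliation of the two conventions for ``distinguished lifting'' --- the form-preserving lift of Definition \ref{def:marco-homologico} versus the sign-normalized lift of Proposition \ref{prop:levantamiento-distinguido} --- via the observation that the form-preserving lift preserves periods and the standard saddle classes are singled out by having period in $\mathbb{H}_+$. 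Both of these are needed for the forward map to be well defined on isomorphism classes, so your treatment is the more complete of the two.
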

\begin{proof}
	Let \( (X,\varphi,f)\in\Cuad(X_0,\varphi_0) \).
	Recall that by Proposition \ref{prop:levantamiento-distinguido}
	\( f\) has a distinguished lifting to the spectral
	covers of  \( X_0\) and \( X\)
	and that this lifting induces an isomorphism of
	the corresponding hat homology groups.
	Let \( \hat f_*\) be that isomorphism.
	Thus \( \hat f_*\circ\theta\) is a
	\( \Gamma-\)framing for \( (X,\varphi)\).
	It can be shown that \( (X, \varphi, \hat f_*\circ\theta) \in \Cuad(X_0,\varphi_0,\theta_0)^\Gamma\)
	and that the resulting \( \Gamma-\)framed differential is well-defined.

	Now, let \( (X, \varphi, \theta) \in \Cuad(X_0,\varphi_0,\theta_0)^\Gamma\).
	There is a diffeomorphism \( f:X_0 \rightarrow X\) that
	preserves the strips and such that the distinguished lifting
	presreves the \( \Gamma-\)framings.
	It is clear that \( (X,\varphi, f)\)
	is a \( \varphi_0-\)framed differential
	We have now to prove that this differential
	is independent of the chosen diffeomorphism \( f\).
	For this, assume that \( g\) is another diffeomorphism
	with the same properties.
	Then \( (X,\varphi, f)\) and \( (X,\varphi, g)\)
	define possibly different \(\varphi_0-\)framed differentias.
	To see that they are the same, we will prove that they have the same
	period vector and appeal to Theorem  \ref{teo:principal}.
	Let \( F_i\) be the \( i\)th horizontal strip of \( X_0\).
	To ensure that the \( i\)th period of the two differentials is the same,
	it is enough to see that \( f(F_i)\) and \( g(F_i)\)
	are the same strip of \( X\).
	Let \( \alpha_i\) be the standard saddle class corresponding to
	\( F_i\) and let \( a:= \theta_0^{-1}(\alpha_i)\)
	be the corresponding element in \( \Gamma\).
	Since the liftings of \( f\) and of \( g\) preserve the \( \Gamma-\)framings,
	it is clear that \( \hat f_*(\alpha_i) = \hat f_*(\theta_0(a))=\theta(a)= \hat g_*(\theta_0(a))= \hat g_*(\alpha_i)\) thus \( \hat f_*(\alpha_i)= \hat g_*(\alpha_i)\) and the corresponding strips are also equal, \ie \( f(F_i)=g(F_i)\).
\end{proof}

\subsection{Gluing of horizontal strips}

This section is devoted to the proof of theorem \ref{teo:principal}.
We shall split the argument in two parts: proving that for every
period there is a differential with that period (surjectivity) and proving
that the period determines the differential (injectivity).

To prove surjectivity, we will use a kind of ``surgery''
to get a differential with arbitrary periods in \( \mathbb{H}_+^N\).

The idea is to take as a starting point the distinguished point of \( \Cuad(X_0,\varphi_0)\) \ie \( (X_0,\varphi_0,\Id)\), and make a 

Let \( V\in \mathbb{H}_+^N\) be an arbitrary vector.
We need to find a \( \varphi_0-\)framed differential whose period vector is
\( V\).

As a first step, we work individually in each strip of \( (X_0,\varphi_0)\).
Without loss of generality, let \( F\) be the first horizontal strip of  \( X_0\).

In this argument it is better to work with standard strips such that
 the marked points in the bottom boundary is zero.
 For every standard strip there is a unique
translation of the complex plane that carries the strip to one with the bottom marked point equal to zero.
After this modification, the strip is no longer symmetric with respect to the origin,
but with respect to the point \( w/2\), where \( w\) is the period of the strip.

Let \( f\) be any isomorphism of the horizontal strip with some standard strip \( F_w\).
And let \( F_{v_1}\) be the standard strip of period  \( v_1\).

We claim there is a diffeomorphism  \( g:F_{v_1} \rightarrow F_w\) with the following properties:
\begin{itemize}
		 \item in neighbourhoods of the boundary of  \( F_{v_1}\) it is a translation of the complex plane;
		 \item it commutes with every translation in the horizontal direction;
		 \item fixes zero and carries  \( v_1\) to \( w\).
\end{itemize}

To construct such diffeomorphism, first we take any smooth increasing function

\( h:[0,\im(v_1)] \rightarrow [0,1]\) that vanishes in a neighbourhood of \(0\) and is
equal to one in a neighbourhood of  \( \im(v_1)\).

With the help of the function \( h\) we defined the sought-after diffeomorphism by the formula:

\[g(z):= z+ (w-v_1)h(\im(z)) \]

It is clear that \( g\) has all the required properties.

Let \( \psi= f\circ g:F_{v_1}\rightarrow X_0\).
Then \( \psi\) is a diffeomorphism between the standard horizontal strip of period
\( v_1\) and the first horizontal strip of  \( X_0\).
This diffeomorphism preserves the horizontal strip, however, in general it is neither a 
complex isomorphism nor preserves the quadratic differential.

By construction the restriction of \( \psi\) to a small enough neighbourhood
of the boundary of \( F_{v_1}\) is in fact a complex isomorphism that
preserves the quadratic differential.

Using \( \psi\), we can copy the quadratic differential and complex structure from the
horizontal strip to the surface  \( X_0\).
In this way we get a new complex structure and a new quadratic differential defined on \( F\)
and such that \( \psi^{-1}\) is an isomorphism to the horizontal strip  \( v_1\).

Iterating this procedure for all the other horizontal strips and periods \( v_i\),
we get new complex structures and quadratic differentials defined for every horizontal strip of \( X_0\).

The problem now lies in proving that this new structures are compatible and define a complex structure 
 on the whole surface \( X_0\), and also that the resulting meromorphic differential has the same poles and zeroes as  \( \varphi_0\).

Observe that every one of this new structures is equal to the old one in neighbourhoods of the saddle trajectories. Since they are equal to the old one, they are compatible amongst each other along small enough neighbourhoods 
of the saddle trajectories and the finite critical points.

As a consequence they are equal on the open set \( X_0^\circ\) and define a complex structure on this
subsurface. So we only need to prove that this structure can be extended to the poles of \( \varphi_0\).
This is equivalent to proving that, in the new complex structure, small punctured neighbourhoods of the poles are
biholomorphic to punctured discs.

\begin{lema}\label{lema:extension-polos}
	Let \( X^\circ_\mathcal{N}\) be the subsurface with the new complex structure and let
	\( p\) be a pole of\( \varphi_0\).
	There is a punctured neighbourhood of
	 \( p\) in \( X^\circ_\mathcal{N}\) that is biholomorphic to a punctured disc
	 \( \mathbb{D}\setminus \{0\}\).
\end{lema}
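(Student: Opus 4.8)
The plan is to reduce the statement to the spectral cover, where the square root of the differential is single valued, and then build an explicit biholomorphism from a punctured neighbourhood of $p$ onto a punctured disc out of the flat coordinate $\zeta=\int\sqrt{\varphi_{\mathcal{N}}}$. First I would record the local geometry around $p$. By Theorem \ref{teo:descomposicion}, together with the standing hypotheses that all poles are double (so no half-planes occur) and that there are no saddle trajectories, the separating trajectories converging to $p$ cut a punctured neighbourhood of $p$ into finitely many sectors $S_1,\dots,S_k$, each contained in a single horizontal strip $F_{i_j}$. On each $S_j$ the new complex structure is, by construction, the restriction of the standard flat structure of the model strip $F_{v_{i_j}}\subseteq\mathbb{C}$; and since each diffeomorphism $g_i$ is a translation near the boundary of its strip, these structures agree with one another, and with the original one, along the separating trajectories. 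Hence $\bigcup_j S_j$ carries the flat structure of the new differential $\varphi_{\mathcal{N}}$, assembled from standard half-strips by horizontal gluing maps.

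Since $p$ is a double pole, Proposition \ref{prop:propiedades-cubierta-espectral} tells us it is not a branch point of $\pi$, so a punctured neighbourhood of $p$ in $X^\circ_{\mathcal{N}}$ is biholomorphic to a punctured neighbourhood of either preimage $\hat p$ in the spectral cover, on which the tautological form $\omega_{\mathcal{N}}$ is single valued. I would then develop the universal cover of this punctured neighbourhood into $\mathbb{C}$ via $\zeta=\int\omega_{\mathcal{N}}$. Two facts drive the argument. First, the holonomy of the developing map around $\hat p$ is a \emph{pure} translation $\zeta\mapsto\zeta+\rho$: every transition map between the normal charts of adjacent half-strips is a translation $z\mapsto z+c$ (the gluings are unchanged from the original structure and each strip is a globally coordinatised subset of $\mathbb{C}$), so no reflection enters the composition. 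Second, and crucially, circling $\hat p$ once stacks the half-strips in the transverse direction, each contributing its full height, whence $\im(\rho)=\sum_{j=1}^{k}\im(v_{i_j})$. Because each model strip has period $v_{i_j}\in\mathbb{H}_+$, this sum is strictly positive; in particular $\rho\neq 0$ and $\rho$ is not real.

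With $\rho$ in hand I would conclude using $w=\exp(-2\pi i\,\zeta/\rho)$. This map is holomorphic and $\rho$-periodic, hence descends to the punctured neighbourhood, and its only periods being $\rho\mathbb{Z}$ it is injective there. Since $\im(\rho)>0$, one has $|w|=\exp\!\big(2\pi\,\im(\zeta/\rho)\big)\to 0$ as one approaches $\hat p$ (that is, as $\re(\zeta)\to+\infty$); and solving $\exp(-2\pi i\,\zeta/\rho)=w_\ast$ for $|w_\ast|$ small yields a point $\zeta$ with $\re(\zeta)$ arbitrarily large, which therefore lies in the developed region. Thus $w$ maps a punctured neighbourhood of $\hat p$ biholomorphically onto a genuine punctured disc $\{0<|w|<\epsilon\}$, which after rescaling is $\mathbb{D}\setminus\{0\}$. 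The main obstacle is the residue computation $\im(\rho)=\sum_j\im(v_{i_j})$: one must verify that the sectors around $p$ stack with coherent orientation, so that all heights enter with the same positive sign, and that the holonomy carries no reflection. This is precisely the point at which the hypothesis $V\in\mathbb{H}_+^N$ is used, and it is what guarantees that $p$ survives as a genuine double pole rather than degenerating.
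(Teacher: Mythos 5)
Your proposal is correct in substance, but it takes a genuinely different route from the paper's. The paper argues softly, via quasiconformal theory: the identity map from the old structure to the new one has bounded dilatation on each strip (the gluing diffeomorphism commutes with horizontal translations and each strip has bounded height), hence is quasiconformal on the union of the strips incident to \(p\) by Proposition \ref{prop:pegado-casiconforme}; since the modulus of an annulus is a quasiconformal quasi-invariant (Gr\"otzsch) and the punctured disc about \(p\) has infinite modulus in the old structure, it has infinite modulus in the new one, and the classification of annuli (Lemma \ref{lema:modulo-invariante}) finishes the proof. You instead argue by constructive flat geometry: exhibit the punctured neighbourhood as a cyclic gluing of half-strips by translations, compute the holonomy \(\rho\) of the developing map \(\zeta\), observe that \(\im(\rho)=\sum_j \im(v_{i_j})>0\) precisely because \(V\in\mathbb{H}_+^N\), and write down the chart \(w=\exp(-2\pi i \zeta/\rho)\). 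Your approach is more elementary (no quasiconformal machinery) and proves strictly more: in the coordinate \(w\) the new differential is \(\left(\rho/(2\pi i)\right)^2 dw^2/w^2\), so you obtain for free that \(\varphi_\mathcal{N}\) extends with a double pole at \(p\) (with residue \(\pm\rho/(2\pi i)\)), a fact the paper has to argue separately after the lemma. What the paper's route buys is that it requires no analysis of the combinatorics and orientations of the gluings around \(p\), which is exactly where your argument must be careful.

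Two steps need repair, both within reach of the structure you already set up. First, you cannot literally invoke Proposition \ref{prop:propiedades-cubierta-espectral} for \(\varphi_\mathcal{N}\): the spectral cover is constructed only for meromorphic differentials on a compact surface, and at this stage \(p\) is not even a point of \(X^\circ_\mathcal{N}\), so ``the preimage \(\hat p\)'' does not yet make sense. What you actually need --- and what your parenthetical remark essentially proves --- is only that \(\sqrt{\varphi_\mathcal{N}}\) is single-valued on the punctured neighbourhood: the monodromy sign of the square root around \(p\) is computed from the gluings of the half-strips, these agree with the gluings of \(\varphi_0\) up to translations, and for \(\varphi_0\) the sign is \(+1\) because a double pole has even order. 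Second, ``its only periods being \(\rho\mathbb{Z}\) it is injective there'' is not a valid deduction: a developing map whose holonomy group is a given translation group need not be injective modulo that group (for instance, \(z^4\,dz^2\) on an annulus around the origin has trivial holonomy, yet its developing map \(z\mapsto z^3/3\) is three-to-one). Injectivity is true here, but it must come from your explicit picture: the developed image is a staircase of half-strips, one period of which has total height exactly \(\im(\rho)\), so its \(\rho\mathbb{Z}\)-translates have disjoint interiors and \(\zeta\) is injective modulo \(\rho\mathbb{Z}\); the same picture justifies that every \(w_\ast\) with \(|w_\ast|\) small has a preimage, since a suitable \(\rho\mathbb{Z}\)-translate of any solution of \(\exp(-2\pi i\zeta/\rho)=w_\ast\) lands in the strip \(0\le \im(\zeta)<\im(\rho)\) with \(\re(\zeta)\) large. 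With these two emendations your proof is complete.
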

\begin{proof}
	The proof uses the classification of the possible
	complex structures of  \( \mathbb{D}\setminus\{0\}\) in terms of the modulus.
	For this we refer the reader to \ref{sub:modulus}.
	Specifically, to the definition of the modulus of an annulus (definition \ref{def:modulo}) and to the definition of quasiconformal mappings (definition \ref{def:casiconforme}).

	Let \[ U:=\bigcup_{\overline{F_i}\ni p}^N F_i\] be the union of all the
	horizontal strips that have  \( p\) in their closure.
	Then \( U\) is a neighbourhood of  \( p\).

	Let \( D\) be a small enough disk centered at \( p\) and contained in  \( U\).
	It is clear then that  \( D\setminus \{p\}\) in the old complex structure
	is biholomorphic to a punctured disk. By theorem \ref{lema:modulo-invariante},
	it has infinite modulus.

	For clarity, we shall denote \( U\) endowed with the new complex structure by  \( U_\mathcal{N}\)
	and with the old complex by \( U_\mathcal{O}\).
	
	The identity mapping  \( \Id: U_\mathcal{O}\rightarrow U_\mathcal{N}\)
	is a quasiconformal mapping.
	To see this, it is enough to prove that it is quasiconformal in every
	horizontal strip, because proposition \ref{prop:pegado-casiconforme} then
	ensures that it is quasiconformal in the union of the strips.

	To see that the identity map is quasiconformal in every strip,
	observe that it is equivalent to the diffeomorphism \( \psi\) with
	wich we defined the new complex structure.

	Since \( \psi\) is differentiable and commutes with horizontal translations it is enough to take the supremum of the pointwise dilation in any vertical line.
	The strip is bounded on the vertical direction so we can conclude that
	\( \psi\) has bounded dilation and hence is quasiconformal.
	This proves that \( \Id: U_\mathcal{O}\rightarrow U_\mathcal{N}\) is quasiconformal, hence its restriction to the disc \( D\setminus\{p\}\)
	is also quasiconformal.

	Given that the modulus is a quasiconformal quasi-invariant, the modulus of \( D\setminus \{p\}\) with the old and new complex structure satisfy:
	\[ \frac 1 K m\big(D_\mathcal{N}\!\setminus\!\{p\}\big) ~~\leq~~ m\big(D_\mathcal{O}\!\setminus\! \{p\}\big) ~~\leq~~ K m\big(D_\mathcal{N}\!\setminus\! \{p\}\big)\]
	where \( K\) is a possitive constant. Since \( m\big(D_\mathcal{O}\setminus \{p\}\big)= \infty\), we conclude that \( m\big(D_\mathcal{N}\setminus \{p\}\big)=\infty\)
	\ie , \( D_\mathcal{N}\setminus \{p\}\) is biholomorphic to  \( \mathbb{D}\setminus \{0\}\),
	as required.
\end{proof}

This proves that the complex structure of \( X_0^\circ\) can be extended to the whole surface. In a similar fashion we can prove that the quadratic differentials are compatible and define a holomorphic quadratic differential on  \( X_0^\circ\).
This holomorphic differential extends to a meromorphic quadratic differential
on \( X_0\).
Hence we only need to prove that the orders of the
poles of this new differential are the same as those of \( \varphi_0\).
Since the horizontal trajectories of \( \varphi_0\), are also trajectories of
the new differential, and by the classification of the local behaviour
of the trajectories around a pole, the orders of the poles must be the same.
Thus we get a new \( \varphi_0-\)framed differential \( (X_\mathcal{N},\varphi_N,\Id)\)
whose period vector is exactly \( V\).

This then completes the proof of the surjectivity of the period mapping.

To complete the prove of the theorem, we need to prove injectivity of
the period mapping. Let  \( (X_1,\varphi_1, f_1)\) and \( (X_2,\varphi_2, f_2)\)
be two \( \varphi_0\)-framed differentials with the same periods.

We need to define a \( \varphi_0-\)-framed isomorphism betwee \( (X_1,\varphi_1)\) and \( (X_2,\varphi_2)\)
\ie an isomorphism \( g:X_1\rightarrow X_2\) such that for every strip
 \( F\) of \(X_0\),
\( g\circ f_1(F)\) and \( f_2(F)\) are the same strip.

As in the proof of surjectivity, we shall work individualy in every
horizontal strip of \( X_1\).

Let \( F\) be any horizontal strip of  \( X_0\) and let \( F_1:=f_1(F)\) and \( F_2:=f_2(F)\) be the corresponding strips in  \( X_1\) and \( X_2\).
Choose an oriented curve \( \gamma\) connecting the two finite critical points
in the boundary of  \( F\),
and consider the its image in \( X_1\) and \( X_2\).

There is a unique isomorphism \(\psi_1 \) between \( F_1\) and \( F_w\) where\( w\)
is the period of  \( F_1\) and such that the finite critical points are
mapped to the marked points and also the chosen curve is mapped to a
curve homotopic to the oriented segment from zero to \( w\).
Let \( \psi_2\) be the unique isomorphism between \( F_2\) and \( F_w\)
with the same properties as \(\psi_1 \).

Compossing the isomorphisms \( \psi_1\) and \( \psi_2^{-1}\), we get an isomorphism
from  \( F_1\) to
\( F_2\) sending the curve \( f_1(\gamma)\) in a curve homotopic to  \( f_2(\gamma)\).
For every horizontal strip we define an isomorphism as above.
By construction all of these are
compatible on the critical points of the differential.
Let us prove that they are compatible in every separating trajectory.

Let \( \sigma\) be a separating trajectory that is in the boundary of two
horizontal strips and such that it tends to a point  \( p\), which is a zero of
the differential.
Every point \( q\in\sigma\) is at a finite distance from \( p\) in the
metric of  \( \varphi_1\), and this distance is invariant under isomorphisms of
the quadratic differential. So we can conclude that the image of \( q\) under
every one of the isomorphisms defined on the horizontal strips that contain
 \( \sigma\) in their boundary, coincides, and this is true for every separating
 trajectory.

Since the isomorphisms are compatible on the boundaries of every horizontal strip,
they are compatible globaly and define a homemorphism
\( g\) that is a complex isomorphism on the interior of every strip.
This implies that the homeomorphism is actually a biholomorphism.
Since \( g^*(\varphi_2)= \varphi_1\) on the interior of every horizontal strip,
and given that  \( g^*(\varphi_2)\) and \(\varphi_1\) are meromorphic sections of
the same line bundle, this is enough to prove that they are actualy the same.
Finally we conclude that \( g\) is an isomorphism of \( \varphi_0-\)framed differentials, which concludes the proof of theorem \ref{teo:principal}.

Theorem \ref{teo:principal} is equivalent to Proposition  4.9 of \cite{bridgeland2015}
and the proof given here is adapted from the proof given by Bridgeland and Smith.

\subsection{Digression: quasiconformal mappings and the modulus of an annulus}
\label{sub:modulus}

In this section we define the notion of \emph{modulus} of an annulus and 
the class of \emph{quasiconformal mappings}.
We also recall the theorems we used in the proof of lemma
\ref{lema:extension-polos}.
Given that this is a vast subject outside the scope of
this text, we shall refrain from giving the proofs of this theorems.
One of the classical references for this topics is \cite{ahlfors},
where all the proofs can be found.
Another more up-to-date reference is \cite{hubbard2006teichmuller}.
The content of this section is well-known and is one of the possible
starting points for the study of  \emph{Teichm\"uller spaces}.

Let \( A\) be a Riemann surface that is diffeomorphic to the punctured unitary disc.
This kind of surfaces are called \emph{annuli}.
The universal covering of an annulus is a Riemann surface diffeomorphic to
the complex plane, and the group of covering transformations (which is isomorphic to \( \mathbb{Z}\)) acts holomorphically.

In this case, as a consequence of the uniformization theorem and
the classification of M\"obius transformations,
the universal covering  is then the hyperbolic plane or the complex plane.%
\footnote{There is no free holomorphic action of \(\mathbb{Z}\) on the Riemann sphere.}

In the first case, it is easy to see that the action is equivalent to

\begin{align*}
	\mathbb{Z}\times \mathbb{C} & \xrightarrow{~~~~~} \mathbb{C}\\
n,z & \mapsto z+2\pi in
\end{align*}

so \( A\), which is the quotient, is isomorphic to the complex torus \( \mathbb{C}^*\).

In the second case, there are two options:

Either the generator of the group acts as a parabolic transformation, or
as a hyperbolic transformation.
If the generator acts as a parabolic transformation, then the action is equivalent to:

\begin{align*}
	\mathbb{Z}\times \mathbb{H} & \xrightarrow{~~~~~} \mathbb{H}\\
n,z &\mapsto z+n
\end{align*}
and in this case, the quotient is isomorphic
 to the annulus
\( \mathbb{A}_\infty:= \{z\in \mathbb{C} | 1 \leq | z |\}\)

If the generator acts as a hyperbolic transformation,
there is a positive real number \( r\) such that the action is equivalent
 to:

\begin{align*}
	\mathbb{Z}\times \mathbb{H} & \xrightarrow{~~~~~} \mathbb{H}\\
n,z &\mapsto e^{(nr)}z
\end{align*}
or equivalently
\begin{align*}
	\mathbb{Z}\times \mathbb{B} & \xrightarrow{~~~~~} \mathbb{B}\\
n,z &\mapsto nrz
\end{align*}
where \( \mathbb{B}:=\{z\in \mathbb{C}| 0 \leq \im(z) \leq \pi\}\) 
is the band model of the hyperbolic plane.

Using this last model, it can be shown that the transformation
\(z\mapsto e^{-2\pi i (z/r)} \) gives an isomorphism between the quotient and the
annulus
\( \mathbb{A}_{M}:= \{z\in \mathbb{C} | 1 \leq | z | \leq e^{2\pi M}\}\)
where \( M:=\pi/r\).

The previous observations constitute a sketch of the classification of
all Riemann surfaces topologically equivalent to the punctured disc up to complex
isomorphism.

\begin{definicion}\label{def:modulo}
The number \( M\) in the definition of the annulus \( \mathbb{A}_M\)
is called the \emph{modulus} and is invariant under complex isomorphism.
When \( M=\infty\) we will say that the annulus is \emph{infinite}.
If the annulus is biholomorphic to \(\mathbb{C}^* \) we shall say that it is
\emph{doubly infinite}.

We usually write \( m(A)\) for the modulus of the annulus \(A\).
\end{definicion}

It is clear that \( \mathbb{D}\setminus \{0\}\) is an infinite annulus.

The next lemma summarizes the previous remarks:

\begin{lema}\label{lema:modulo-invariante}
	The modulus is a full complex (or conformal) invariant of annuli.
\end{lema}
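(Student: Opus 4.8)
The plan is to leverage the classification just sketched, which exhibits every annulus as the quotient of its universal cover by an infinite cyclic group of holomorphic deck transformations, and to reduce the statement to the conjugacy classification of those cyclic groups. Concretely, \emph{full invariant} should be read as: two annuli $A$ and $A'$ are conformally isomorphic if and only if $m(A)=m(A')$, under the conventions $m=\infty$ for $\mathbb A_\infty$ and ``doubly infinite'' for $\mathbb C^*$. The sufficiency direction is immediate from the classification, since equal moduli force both $A$ and $A'$ to be isomorphic to the same normal form ($\mathbb C^*$, $\mathbb A_\infty$, or $\mathbb A_M$), hence to each other. All the work lies in necessity, namely that an isomorphism forces the moduli to coincide.

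The central tool I would set up first is the lifting principle: given a biholomorphism $\phi\colon A\to A'$, the composite $\phi\circ p$ with the covering $p\colon\widetilde A\to A$ lifts, by simple connectivity of $\widetilde A$, to a biholomorphism $\widetilde\phi\colon\widetilde A\to\widetilde A'$ of universal covers with $p'\circ\widetilde\phi=\phi\circ p$. For any deck transformation $g$ of $A$, the map $\widetilde\phi\,g\,\widetilde\phi^{-1}$ is a deck transformation of $A'$, so $\widetilde\phi$ conjugates the deck group of $A$ onto that of $A'$ inside $\Aut(\widetilde A)\cong\Aut(\widetilde A')$. The question thus becomes: which infinite cyclic groups of automorphisms are conjugate?

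With this in place I would argue in three steps. First, the isomorphism type of the universal cover ($\mathbb C$ versus $\mathbb H$, the sphere being excluded as already noted) is itself a conformal invariant, separating $\mathbb C^*$ (covered by $\mathbb C$) from the remaining annuli (covered by $\mathbb H$); and since every free holomorphic $\mathbb Z$-action on $\mathbb C$ is by translation, $\mathbb C^*$ is the \emph{unique} annulus with that cover. Second, among annuli covered by $\mathbb H$ the generating deck transformation is a fixed-point-free element of $\mathrm{PSL}(2,\mathbb R)$, hence parabolic or hyperbolic; as conjugation preserves this dichotomy (detected by the number of boundary fixed points, or by the trace), the parabolic case $\mathbb A_\infty$ cannot be isomorphic to any hyperbolic case $\mathbb A_M$ with $M<\infty$. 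Third, for two hyperbolic generators, conjugate to $z\mapsto e^{r_1}z$ and $z\mapsto e^{r_2}z$, I would invoke the fact that the translation length $r=\inf_{z}\,d_{\mathbb H}(z,gz)$ is an isometric, hence conjugacy, invariant; since $\widetilde\phi$ carries the generator to a conjugate of the generator or its inverse and $r>0$, this gives $r_1=r_2$ and therefore $M_1=\pi/r_1=\pi/r_2=M_2$.

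The main obstacle is exactly this third step: showing that distinct displacement lengths yield non-conjugate cyclic groups, i.e.\ that $M$ is a genuine invariant and not merely a bookkeeping label. The cleanest route is the conjugation-invariance of the hyperbolic translation length, justified either through the $\inf d_{\mathbb H}(z,gz)$ characterization, or by passing to the band model of $\mathbb H$, where $g$ acts by a real translation whose length determines the modulus directly. Either way one must note that the lift may send $g$ to $g^{-1}$, which is harmless since $r$ and $M$ are taken positive. Combining the three steps with the sufficiency observation then yields $m(A)=m(A')\iff A\cong A'$, as claimed.
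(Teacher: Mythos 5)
Your proposal is correct and takes essentially the same route as the paper: the classification of annuli via their universal covers and cyclic deck groups, which the paper sketches immediately before the lemma and then delegates to its references (Ahlfors, and Hubbard's Proposition 3.2.1). The only difference is completeness: you explicitly supply the necessity direction --- the lifting principle together with the conjugacy invariance of the parabolic/hyperbolic dichotomy and of the hyperbolic translation length --- whereas the paper's text only establishes that every annulus is isomorphic to one of the normal forms $\mathbb{C}^*$, $\mathbb{A}_\infty$, $\mathbb{A}_M$ and cites out the fact that distinct moduli yield non-isomorphic annuli.
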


For more details on this definition and the proof of the lemma,
see \cite[p.~11]{ahlfors} or \cite[prop. 3.2.1]{hubbard2006teichmuller}.

As we have already indicated, the modulus is a conformal invariant of
the annuli, however, in certain circumstances, the class of conformal
maps is too small to be useful.

One way to solve this issue is to instead consider \emph{quasiconformal maps}.
There are several equivalent definitions of quasiconformal maps,
none of which is particularly short.

Let \( f\) be a \( C^1\) mapping between open sets of the complex plane
and let \( p \in U\)
be any point in the domain.
The (pointwise) dilation of \( f\) at \( p\) is the number 
\[D_f(p):=\frac{|f_z|+|f_{\overline z}|}{|f_z|-|f_{\overline z}|}(p) \]

Observe that the dilation is at least one, and that is continuous
if the derivative of \( f\) is continuous.
In particular, the dilation of a diffeomorphism
is bounded in any compact subset of its domain.
Remark that the dilation is exactly one if and only if \( f_{\overline z}=0\),
\ie when  \( f\) is conformal at \( p\).

\begin{definicion}\label{def:casiconforme}
	Define the (global) \emph{dilation} of \( f\)
	as the supremum of the pointwise dilations
	over the domain of  \( f\).
	If the dilation of \( f\) is finite, we say that
	\( f\) is a quasiconformal mapping of quasiconformality constant
	 \( K:=sup(D_f(p))\), or more briefly,  that \( K-\)quasiconformal.
\end{definicion}

Every  \( 1-\)quasiconformal map is actually conformal.

The notion of quasiaconformality can be extended to the class of
homeomorphisms, not necessarily differentiable.
For this we need the notion of distributional (or weak) derivatives.

We shall refrain form giving the specific details of this definition, instead we refer the reader
to Chapter 2 of  \cite{ahlfors}
and also Chapter 4 of \cite{hubbard2006teichmuller}.

As we have said, one of the virtues of using quasiaconformal mappings
is that they are more abundant than conformal mappings.
Another useful property is that some of the conformal
invariants give rise to quasiaconformal \emph{quasi-invariants}.
One of the most important examples of these \emph{quasi-invariants} is
precisely the modulus of a cylinder.

\begin{proposicion}[Gr\"otzsch theorem for annuli]
	Let \( f: A\rightarrow A^\prime\) be a  \( K-\)quasiconformal map
	between annuli of modulus
	 \( m\) and \( m^\prime\) respectively.
	Then \( m\) and \( m^\prime\) satisfy:
	\[\frac 1K m^\prime \leq m \leq K m^\prime \]
	It is in this sense that we say that the modulus is a quasiconformal \emph{quasi-invariant}.
\end{proposicion}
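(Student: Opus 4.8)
The plan is to prove the result by the classical \emph{length--area method}, reducing first to a model situation and then to a single inequality. By Lemma~\ref{lema:modulo-invariante} the modulus is a conformal invariant, so I may replace $A$ and $A'$ by the conformally equivalent flat cylinders $C=(\mathbb{R}/\mathbb{Z})\times[0,m]$ and $C'=(\mathbb{R}/\mathbb{Z})\times[0,m']$, with coordinates $z=x+iy$ and $w=u+iv$; under these identifications $f$ is still $K$-quasiconformal and, being a homeomorphism of annuli, it carries the two boundary circles of $C$ to the two of $C'$. Moreover it suffices to prove one of the two inequalities, say $m'\le Km$: applying that inequality to the inverse map $f^{-1}\colon C'\to C$, which is again $K$-quasiconformal, yields $m\le Km'$, and the two together give $\tfrac1K m'\le m\le Km'$.

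For the inequality $m'\le Km$ I would argue as follows. For each fixed $x$, the vertical segment $y\mapsto f(x+iy)$ joins the two boundary circles of $C'$, so its $v$-coordinate changes by $m'$ between the endpoints; writing $\partial_y f=i(f_z-f_{\bar z})$ and using $|\partial_y v|\le|\partial_y f|$ this gives the crossing estimate $m'\le\int_0^m|\partial_y f|\,dy$. Integrating over $x\in[0,1]$ and applying Cauchy--Schwarz against the constant function $1$ on $C$ produces
\[ m'^2\le m\iint_{C}|\partial_y f|^2\,dx\,dy. \]
The final step is to convert the integrand into the Jacobian $J_f=|f_z|^2-|f_{\bar z}|^2$: from $|\partial_y f|\le|f_z|+|f_{\bar z}|$ and the identity $(|f_z|+|f_{\bar z}|)^2=D_f\,(|f_z|^2-|f_{\bar z}|^2)=D_f\,J_f$ together with $D_f\le K$ (see Definition~\ref{def:casiconforme}), I obtain $|\partial_y f|^2\le K\,J_f$. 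Since the Jacobian integrates to the area of the image, $\iint_C J_f=\operatorname{area}(C')=m'$, this yields $m'^2\le m\cdot Km'$, hence $m'\le Km$.

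The main obstacle is regularity. The computation above is immediate when $f$ is of class $C^1$, but a quasiconformal map is \apriori only a homeomorphism admitting distributional derivatives. To justify every step one needs the standard structure theory quoted in Definition~\ref{def:casiconforme}: that $f$ is absolutely continuous on almost every horizontal and vertical line (so the crossing integral is legitimate and $\partial_y f$ exists a.e.\ on a.e.\ line), that $f$ is differentiable almost everywhere with locally integrable partial derivatives, and that the area formula $\iint_C J_f\,dx\,dy=\operatorname{area}(f(C))$ holds (equivalently, that $f$ satisfies Lusin's condition N). These are precisely the facts the text chooses to cite rather than prove; granting them, the length--area estimate goes through verbatim. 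A minor auxiliary point is that $f$ genuinely sends each end of the cylinder to an end of the target, which follows from $f$ being a proper homeomorphism between annuli and is what makes the crossing estimate $m'\le\int_0^m|\partial_y f|\,dy$ valid.
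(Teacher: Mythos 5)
Your argument is correct, and it is not really a different route: the paper offers no proof of this proposition at all, deferring to Theorem 1.2 of \cite{ahlfors} and Section 4.3 of \cite{hubbard2006teichmuller}, and the length--area computation you give is essentially the argument contained in those references. The algebra is right: $\partial_y f = i(f_z - f_{\bar z})$, $(|f_z|+|f_{\bar z}|)^2 = D_f\,J_f$, and the chain $m'^2 \le m \iint_C |\partial_y f|^2\,dx\,dy \le K m \iint_C J_f\,dx\,dy \le K m \operatorname{area}(C') = Kmm'$ gives $m' \le Km$; the reduction to a single inequality via the $K$-quasiconformality of $f^{-1}$ is legitimate; and the regularity issues you flag (the ACL property, a.e.\ differentiability, and the area estimate $\iint_C J_f\,dx\,dy \le \operatorname{area}(f(C))$, for which only the inequality is needed, so Lusin's condition N can even be bypassed) are exactly the standard facts one must import from the theory quoted around Definition \ref{def:casiconforme}.

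One caveat is worth recording. Your normalization to flat cylinders $(\mathbb{R}/\mathbb{Z})\times[0,m]$ silently assumes that $m$ and $m'$ are finite, whereas the proposition as stated also covers infinite annuli (Definition \ref{def:modulo}), and the unique application of it in the paper --- Lemma \ref{lema:extension-polos} --- is precisely the case $m=\infty$: there one knows $m(D_\mathcal{O}\setminus\{p\})=\infty$ and must conclude $m(D_\mathcal{N}\setminus\{p\})=\infty$. The infinite case does follow from what you proved, but it needs one more standard ingredient: exhaust the infinite cylinder by finite sub-cylinders $C_n$ of modulus $n$; each image $f(C_n)$ is an essential sub-annulus of $A'$, so monotonicity of the modulus gives $m(f(C_n)) \le m'$, while your finite-modulus inequality gives $m(f(C_n)) \ge n/K$; letting $n\to\infty$ forces $m'=\infty$. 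Without this (or a similar limiting argument) the statement you proved does not quite cover the way the proposition is actually used.
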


For the proof of this proposition see Theorem 1.2 of \cite{ahlfors} or section  4.3 of \cite{hubbard2006teichmuller}.

Observe that by the way in which we defined
the dilation of a homeomorphism,
\ie that it only depends on quantities invariant
under conformal (or holomorphic) coordinate changes,
the dilation can also be defined
for homeomorphisms between Riemann surfaces.
Hence the notion of quasiaconformality can also be extended to
the case of Riemann surfaces.
Finally, the next Proposition allows us to prove
that a mapping is quasiaconformal given that it is
quasiaconformal in a collection of subsets of its domain:

\begin{proposicion}
	Let \( X\) and \( Y\) be two Riemann surfaces and let
	\( f:X\rightarrow Y\) be a homeomorphism.
	Suppose there is a compact subset \( A\) of \( X\) 
	that topologically is an embedded graph.
	If \( f\) is \( K-\)quasiconformal
	on every connected component of \( X\setminus A\), then \( f\) is quasiconformal.
\end{proposicion}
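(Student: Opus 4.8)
The plan is to exploit the fact that quasiconformality is a \emph{local} property and that the exceptional set \( A \), being one-dimensional, is negligible for the analytic (ACL) characterization of quasiconformal maps. Since the pointwise dilation is defined through quantities invariant under holomorphic coordinate changes (as remarked just above the statement), it suffices to verify quasiconformality in a neighbourhood of each point of \( X \). At points of \( X\setminus A \) there is nothing to prove, so I would fix \( p\in A \), choose a coordinate disc \( D \) around \( p \) and a coordinate disc around \( f(p) \), and transport the problem to a homeomorphism \( f:D\to D' \) between plane domains that is \( K \)-quasiconformal on \( D\setminus A \). After shrinking, \( A\cap D \) is a finite union of arcs (the edges of the graph meeting \( D \)) together with finitely many vertices.

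First I would recall the ACL characterization (see \cite[Ch.~2]{ahlfors} or \cite[Ch.~4]{hubbard2006teichmuller}): a homeomorphism between plane domains is \( K \)-quasiconformal if and only if it is absolutely continuous on almost every horizontal and vertical line and its complex dilatation satisfies \( |f_{\overline z}|\le k|f_z| \) almost everywhere, where \( k=(K-1)/(K+1) \). The hypothesis gives both conditions on \( D\setminus A \). Because \( A \) has zero two-dimensional Lebesgue measure, the almost-everywhere dilatation bound on \( D\setminus A \) is already an almost-everywhere bound on all of \( D \), and the partial derivatives computed on \( D\setminus A \) serve as the partial derivatives on \( D \) off a null set. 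So the only genuine issue is absolute continuity \emph{across} \( A \).

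The key step, which I expect to be the main obstacle, is to upgrade ACL on \( D\setminus A \) to ACL on \( D \). Here I would use that each edge is a smooth (or at worst piecewise-smooth) arc, so that by Sard's theorem almost every horizontal line and almost every vertical line meets \( A \) in only finitely many points. Fixing such a horizontal line \( \ell \) on which \( f \) restricted to \( \ell\cap(D\setminus A) \) is absolutely continuous, the finitely many points of \( \ell\cap A \) cut \( \ell \) into finitely many closed subintervals on each of which \( f \) is absolutely continuous; since \( f \) is continuous on all of \( \ell \), these pieces glue to an absolutely continuous function on \( \ell \). The same argument applies to almost every vertical line, so \( f \) is ACL on \( D \). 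Combined with the dilatation bound this shows \( f \) is \( K \)-quasiconformal on \( D \), hence quasiconformal at \( p \); as \( p\in A \) was arbitrary and the complement is already handled, \( f \) is quasiconformal on \( X \).

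The delicate point to get right is the measure-theoretic bookkeeping: one must ensure that ``almost every'' line simultaneously meets \( A \) in finitely many points and is a line of absolute continuity for \( f \) on \( D\setminus A \), which is fine since each is a full-measure condition whose intersection is still of full measure. Alternatively, one could bypass the line-by-line argument by invoking a removability theorem for quasiconformal maps, namely that a closed set of \( \sigma \)-finite linear (one-dimensional Hausdorff) measure is removable for homeomorphisms that are quasiconformal off the set; a finite embedded graph has finite linear measure, so the statement would follow immediately. I would present the ACL argument as the primary route and mention removability as the conceptual reason it works.
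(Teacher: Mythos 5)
Your reduction to a local statement around points of \(A\) is sensible, and you correctly flagged the gluing of absolute continuity across \(A\) as the key step --- but that is exactly where your argument has a genuine gap. Knowing that \(f\) is absolutely continuous on the \emph{open} pieces into which \(\ell\cap A\) cuts a line \(\ell\), together with continuity of \(f\) on \(\ell\), does \emph{not} imply absolute continuity across the points of \(\ell\cap A\): the function \(x\mapsto x\sin(1/x)\), extended by \(0\) at the origin, is continuous, smooth off the origin, yet of unbounded variation, hence not absolutely continuous on any interval containing \(0\). Moreover, the ACL property furnished by \(K\)-quasiconformality on a component \(U\) of \(D\setminus A\) is an \emph{interior} property: it gives absolute continuity on compact subsegments of \(\ell\cap U\), not up to the endpoints lying on \(A\), so you cannot even speak of ``absolute continuity on the closed subintervals'' without further work. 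To close the gap one needs a quantitative input controlling the variation up to \(A\): for instance, shrink \(D\) so that \(f(D)\) has finite area, use the quasiconformal inequality \(|Df|^2\le K J_f\) to conclude \(Df\in L^2(D\setminus A)\subseteq L^1(D)\), and then Fubini gives that on almost every line the derivative of the restriction is integrable, which is precisely what lets the absolutely continuous pieces extend to the closed subintervals and then glue by continuity. Without some such argument (or a modulus-of-quadrilaterals argument), the step you call the main obstacle remains unproved. Note also that for a merely \emph{topological} embedded graph your premise that \(A\) is Lebesgue-null can fail (Osgood arcs have positive area); both your argument and the paper's implicitly use that the edges are smooth or analytic arcs, as they indeed are in the intended application, where \(A\) consists of separating trajectories and critical points.

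Your fallback route, by contrast, is legitimate: removability of closed sets of \(\sigma\)-finite linear measure for homeomorphisms quasiconformal off the set is a genuine theorem and yields the proposition at once when the edges are rectifiable, though it is a much heavier tool than the problem requires. The paper takes a lighter path: it reduces everything to Proposition \ref{prop:pegado-casiconforme} --- a homeomorphism of plane domains that is quasiconformal off the real axis is quasiconformal --- which it quotes from proposition 4.2.7 of \cite{hubbard2006teichmuller}; locally straightening each edge of the graph then finishes the argument. If you repair your ACL argument with the \(L^2\)-integrability input sketched above, you will essentially have reproved that gluing lemma, so citing it directly (or citing the removability theorem) is the shorter and safer route.
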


This proposition is a consequence of:

\begin{proposicion}\label{prop:pegado-casiconforme}
Let \( f:U \rightarrow V\) be a homeomorphism such that \( U\) and \( V\)
are subsets of the complex plane.
If \( f\) is quasiconformal in the complement of the real axis,
then it is quasiconformal.
\end{proposicion}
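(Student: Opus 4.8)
The plan is to verify the analytic characterization of quasiconformality rather than work geometrically: a homeomorphism between planar domains is $K$-quasiconformal precisely when it is absolutely continuous on almost every horizontal and vertical line (the ACL property) and its distributional derivatives satisfy the Beltrami inequality $|f_{\bar z}| \le k\,|f_z|$ almost everywhere, with $k = (K-1)/(K+1)$. I would take this equivalence for granted, as it belongs to the quasiconformal theory we are citing from \cite{ahlfors} and \cite{hubbard2006teichmuller}, together with the standard facts that a $K$-quasiconformal homeomorphism has the ACL property and that its Jacobian controls the square of its derivatives.

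Write $U^+ = U \cap \{\im > 0\}$ and $U^- = U \cap \{\im < 0\}$, so that the complement of the real axis in $U$ is the disjoint open set $U^+ \cup U^-$, on which $f$ is $K$-quasiconformal by hypothesis. The Beltrami inequality therefore holds at almost every point of $U^+ \cup U^-$; since the real axis has two-dimensional Lebesgue measure zero, it holds almost everywhere on $U$. The only remaining task is to establish the ACL property on all of $U$, including on segments that straddle the axis.

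In the horizontal direction this is immediate: every horizontal line other than the axis itself lies in $U^+ \cup U^-$, so for almost every height the restriction of $f$ is absolutely continuous, the single exceptional line being negligible in the parameter. The vertical direction is where the work lies. Fix a closed rectangle $R = [a,b]\times[-d,d]\subseteq U$ straddling the axis and write $R^{\pm} = R \cap U^{\pm}$. On each half $f$ is quasiconformal, so for almost every $x \in [a,b]$ the restriction $y \mapsto f(x+iy)$ is locally absolutely continuous on the open vertical segments above and below the axis. Since the intersection of two full-measure parameter sets is again of full measure, for almost every $x$ both one-sided restrictions are locally absolutely continuous; using that $f$ is continuous at the junction $x+i0$ and that the vertical variation is finite up to the axis, the two pieces glue into a function that is absolutely continuous on the whole segment $[-d,d]$. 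This gives the ACL property on $U$, and combined with the Beltrami inequality we conclude that $f$ is $K$-quasiconformal on $U$.

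I expect the main obstacle to be exactly this gluing across the axis, and more precisely the claim that absolute continuity holds \emph{up to} the axis on each side so that the two one-sided pieces may be joined. The crux is to bound the vertical variation $\int_{0}^{d}|f_y(x+iy)|\,dy$ for almost every $x$, including the approach to the axis. This follows from the pointwise quasiconformal inequality $|f_y|^2 \le K\,J_f$, where $J_f = |f_z|^2-|f_{\bar z}|^2$ is the Jacobian: integrating over $R^+$ yields $\int_{R^+}|f_y|^2 \le K\,\operatorname{area}(f(R))<\infty$, because $f(R)$ is bounded, and Fubini then makes the vertical variation finite for almost every $x$. Thus the continuous boundary value at the axis is attained with no singular mass concentrated on the line, which is precisely what licenses the gluing and removes the axis as an obstruction to quasiconformality.
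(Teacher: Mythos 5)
Your proof is correct, but it is genuinely different from what the paper does: the paper offers no argument at all for this proposition, simply deferring to Proposition 4.2.7 of \cite{hubbard2006teichmuller}, whereas you reconstruct the proof that lies behind such a citation. Your route is the standard analytic one: since the axis has planar measure zero, the Beltrami inequality \( |f_{\bar z}|\le k|f_z| \) survives almost everywhere for free, and the entire content of the statement is the ACL property across the axis; you reduce that to the integrability of \( \partial_y f \) on almost every vertical segment via the pointwise bound \( |f_y|^2\le K J_f \), the area inequality \( \int_A J_f\le |f(A)| \), and Fubini, and then glue the two one-sided absolutely continuous pieces using continuity at the axis. The one delicate point --- that finite variation alone would not suffice (a Cantor-type singular part could hide on the axis), so one must know the one-sided restrictions are integrals of their derivatives up to the boundary --- is exactly the point you isolate with the ``no singular mass on the line'' remark, and your dominated-convergence justification of it is sound. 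What the paper's choice buys is brevity, consistent with its announced policy of not proving results in this digression; what your version buys is a self-contained text in which one can see precisely where each hypothesis enters (the line having measure zero, quasiconformality on both sides with the \emph{same} constant \( K \), and continuity of \( f \) across the line, which is what rules out the counterexamples one gets for merely measurable gluings).
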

In turn, this proposition is a consequence of proposition
 4.2.7 of \cite{hubbard2006teichmuller}.

\printbibliography
\end{document}